\documentclass[a4paper,12pt,oneside]{article}

\synctex=1

\usepackage[hidelinks]{hyperref}
\usepackage{amsmath}
\usepackage{amssymb}
\usepackage{amsthm}
\usepackage{dsfont}
\usepackage{graphicx}
\usepackage[lmargin=25mm,rmargin=25mm,top=25mm,bottom=30mm,paperwidth=210mm,paperheight=11in]{geometry}
\usepackage{mathrsfs}
\usepackage{pifont}
\usepackage{tikz}

\usepackage[utf8]{inputenc}

\date{}

\newtheorem{theorem}{Theorem}[section]
\newtheorem*{theorem*}{Theorem}
\newtheorem{lemma}{Lemma}[section]
\newtheorem{proposition}[lemma]{Proposition}
\newtheorem{corollary}[lemma]{Corollary}
\theoremstyle{definition}
\newtheorem*{definition*}{Definition}

\newcommand{\E}{\mathbb{E}}
\newcommand{\N}{\mathbb{N}}

\renewcommand{\P}{\mathbb{P}}
\newcommand{\R}{\mathbb{R}}
\newcommand{\Z}{\mathbb{Z}}

\newcommand{\1}{\mathds{1}}

\newcommand{\C}{\mathcal{C}}
\newcommand{\D}{\mathcal{D}}
\renewcommand{\o}{{\boldsymbol{0}}}
\newcommand{\LL}{\mathbb{L}}
\newcommand{\F}{\mathcal{F}}
\newcommand{\oeta}{\overline{\eta}}
\newcommand{\tM}{\tilde{M}}
\newcommand{\teta}{\tilde{\eta}}

\DeclareMathOperator*{\diam}{diam}
\newcommand{\dd}{\mathrm{d}}
\renewcommand{\epsilon}{\varepsilon}

\newcommand{\law}{\mathscr{L}}
\newcommand{\precc}{\preccurlyeq}

\renewcommand{\leq}{\leqslant}
\renewcommand{\le}{\leqslant}
\renewcommand{\geq}{\geqslant}
\renewcommand{\ge}{\geqslant}
\renewcommand{\subset}{\subseteq}

% FOR TKIZPICTURE
\usepackage{pgfplots}
%\pgfplotsset{compat=1.9}
\usetikzlibrary{calc}
\usetikzlibrary{patterns}
\definecolor{myblue}{rgb}{0,0.5,1}
\definecolor{mypink}{rgb}{1,0,1}
\definecolor{mygreen}{rgb}{0,0.6,0}
 \pgfdeclarepatternformonly{north west spaced lines}{\pgfqpoint{-1pt}{-1pt}}{\pgfqpoint{10pt}{10pt}}{\pgfqpoint{9pt}{9pt}}%
{
  \pgfsetlinewidth{0.4pt}
  \pgfpathmoveto{\pgfqpoint{0pt}{10pt}}
  \pgfpathlineto{\pgfqpoint{10.1pt}{-0.1pt}}
  \pgfusepath{stroke}
}

% \usetikzlibrary{external}
% \tikzexternalize[prefix=tikz/]
% use pdflatex -shell-escape filename if you change one of the TIKZ figures
\pgfplotsset{compat=1.13}

\newcommand{\ontop}[2]{\genfrac{}{}{0pt}{}{#1}{#2}}

\setlength{\marginparwidth}{20mm}
\setlength{\marginparsep}{4mm}

\ifx \Leo \undefined
\else
\setlength{\marginparsep}{-20mm}

\fi

\setlength{\parindent}{0pt}
\setlength{\parskip}{4pt}

\begin{document}

\title{Scaling limit of subcritical contact process}

\author{Aurelia Deshayes and Leonardo T. Rolla
\\
\small
Argentinian National Research Council at the University of Buenos Aires
\\
\small
NYU-ECNU Institute of Mathematical Sciences at NYU Shanghai}

\maketitle

\begin{abstract}
In this paper we study the subcritical contact process on $\Z^d$ for large times, starting with all sites infected.
The configuration is described in terms of the macroscopic locations of infected regions in space and the relative positions of infected sites in each such region.
\end{abstract}

This preprint has the same numbering of sections, equations, figures and theorems as the the
published article
``\emph{Stochastic Process. Appl. 127 (2017): 2630--2649.}''

\section{Introduction}

We consider the classical contact process on $\Z^d$, briefly described as follows.
The state at time $t$ is a subset $\eta_t \subseteq \Z^d$, or equivalently an element $\eta_t \in \{0,1\}^{\Z^d}$.
Each infected site $x$ (\textsl{i.e.}\ $x\in\eta_t$) heals spontaneously (\textsl{i.e.}\ is removed from $\eta_t$) at rate $1$.
Each healthy site $x$ (\textsl{i.e.}\ $x\not\in\eta_t$) gets infected (\textsl{i.e.}\ is added to $\eta_t$) at rate given by the number of its nearest neighbors $y$ that are infected at time $t$, multiplied by $\lambda>0$.
The number $\lambda$ is the only parameter of this time evolution.
For $A\subset \Z^d$, we denote by $(\eta_t^A)_{t\ge 0}$ the process starting from $\eta_0=A$.
When $A$ is random and has distribution $\mu$, we denote the process by $\eta_t^\mu$.
When $A=\{x\}$ we write $\eta_t^x$ and when $A=\Z^d$ we may omit the superindex.

The contact process is one of the simplest interacting particle systems that exhibit a phase transition.
There exists a non-trivial critical value $0<\lambda_c<\infty$ such that the probability that an infection starting from a single site propagates indefinitely is positive when $\lambda>\lambda_c$ and zero when $\lambda < \lambda_c$.
See~\cite{IPS,lig99} for background on this and related models.

\medskip

In this paper we study the subcritical phase.
Our goal is to describe the configuration $\eta_t$ starting from $\eta_0=\Z^d$, for large values of $t$.
We describe it in terms of the macroscopic locations of infected regions in space and the relative positions of infected sites in each such region.
Hereafter we assume that $0<\lambda<\lambda_c$ is fixed.

\medskip

To see something in this phase, we have to start with an infinite initial configuration, and search for the infected sites.
In one dimension it is common to start from a configuration that is infinite only to the left, and consider the ``contact process seen from the rightmost point.''
In~\cite{ASS90} it is shown that the subcritical contact process seen from the rightmost point has no invariant measures.
In~\cite{andjelrolla} it is shown that the process nonetheless converges in distribution.

\medskip

In higher dimensions, the idea of ``seen from a specific infected site'' can be replaced by considering the \emph{contact process modulo translations}, at least when finite configurations are being considered.
We say that two non-empty finite configurations $\eta$ and $\eta' \subset \Z^d$ are equivalent if they are translations of each other.
Let $\Lambda$ denote the quotient space resulting from this equivalence, and let $\langle \eta \rangle$ denote the projection of a finite configuration $\eta$ onto $\Lambda \cup \{\emptyset\}$.
The \emph{contact process modulo translations} is the process $(\zeta_t)_{t\ge 0}$ given by $\zeta_t=\langle \eta_t \rangle \in \Lambda\cup\{\emptyset\}$.
Since the evolution rules of $(\eta_t)_{t\ge 0}$ are translation-invariant, the process $(\zeta_t)_{t\ge 0}$ is a homogeneous Markov process.
The set $\Lambda$ is an irreducible class and the absorbing state $\emptyset$ is reached almost-surely.

We say that a probability distribution $\mu$ on $\Lambda$ is a \emph{quasi-stationary distribution}, or simply QSD, if, for every $t>0$, $\law(\zeta_t^{\mu} | \zeta^\mu_t \ne \emptyset)=\mu$.

\begin{proposition}
[\cite{sturmswart,andjelrolla}]
\label{prop:existencenu}
The subcritical contact process modulo translations on $\Z^d$ has a QSD $\nu$ on $\Lambda$ (which depends on $d$ and $\lambda$) such that the Yaglom limit
\begin{equation}
\label{eq:yaglom}
\law \left( \zeta_t^{A} \middle| \zeta_t^{A} \ne \emptyset \right) \to\nu \text{ as }t\to\infty
\end{equation}
holds for any finite non-empty initial configuration $A$.
\end{proposition}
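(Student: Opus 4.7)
The plan is to establish the proposition via a compactness argument exploiting the exponential decay of the survival probability in the subcritical regime, proceeding in three stages: sharp decay of survival, tightness of the conditional laws, and identification of the limit as a QSD.

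\emph{Sharp exponential decay.} Standard subadditivity and monotonicity arguments for the subcritical contact process yield a constant $\gamma=\gamma(d,\lambda)>0$ such that, for every finite non-empty $A$,
\[
c_A\, e^{-\gamma t}\leq \P(\eta_t^A\ne\emptyset)\leq C_A\, e^{-\gamma t},
\qquad
\frac{\P(\eta_{t+s}^A\ne\emptyset)}{\P(\eta_t^A\ne\emptyset)}\xrightarrow[t\to\infty]{}e^{-\gamma s}.
\]

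\emph{Tightness (the main obstacle).} I would next prove that the family $\mu_t^A:=\law(\zeta_t^A\mid \zeta_t^A\ne\emptyset)$, $t\ge 0$, is tight on the countable set $\Lambda$; equivalently, for every $\epsilon>0$ there exist $R,N<\infty$ with
\[
\P\bigl(\diam(\eta_t^A)\leq R,\ |\eta_t^A|\leq N\bigm|\eta_t^A\ne\emptyset\bigr)\ge 1-\epsilon\quad\text{for all }t\ge 0.
\]
Intuitively a typical survivor at time $t$ has persisted inside a small cluster rather than having spread and reconcentrated, so the conditional law should not have mass on arbitrarily large or diffuse shapes. Quantitatively, I would bound $\P(\diam(\eta_t^A)>R,\ \eta_t^A\ne\emptyset)$ via a graphical-construction union bound exploiting the exponential decay of the subcritical two-point function, and then divide by the lower bound from the previous step; a parallel estimate for $|\eta_t^A|$ uses exponential moments of the total infected mass. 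This balancing of a rare-event denominator against a doubly-exponentially small spread probability is the technical crux.

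\emph{QSD property and Yaglom convergence.} Given tightness, any subsequential weak limit $\nu$ of $(\mu_{t_n}^A)_n$ is a well-defined probability measure on $\Lambda$. By the Markov property and the ratio asymptotic,
\[
\mu_{t_n+s}^A(B)=\frac{\P(\zeta_{t_n}^A\ne\emptyset)}{\P(\zeta_{t_n+s}^A\ne\emptyset)}\int\P(\zeta_s^\zeta\in B)\,\dd\mu_{t_n}^A(\zeta)\;\xrightarrow[n\to\infty]{}\;e^{\gamma s}\int\P(\zeta_s^\zeta\in B)\,\dd\nu(\zeta)
\]
for every finite $B\subset\Lambda$; comparing this with $\mu_{t_n+s}^A(B)\to\nu(B)$ (extracted along a sub-subsequence, again using tightness) gives the QSD identity $\int\P(\zeta_s^\zeta\in B)\,\dd\nu(\zeta)=e^{-\gamma s}\nu(B)$. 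To upgrade from subsequential to full convergence and remove the dependence on $A$, irreducibility of $(\zeta_t)$ on $\Lambda$ enables a Doeblin / Perron--Frobenius type coupling of $\mu_t^A$ and $\mu_t^{A'}$: starting from any two finite configurations the two conditional chains meet (conditional on survival) in a geometrically bounded waiting time, so all subsequential limits must coincide with a single QSD $\nu$, which is then the Yaglom limit~\eqref{eq:yaglom}.
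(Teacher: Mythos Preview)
The paper does not prove Proposition~\ref{prop:existencenu}; it is quoted from~\cite{sturmswart,andjelrolla}. From what the paper itself records about those references (see the discussion around~(\ref{eq:hprefactor}) and the proof of Lemma~\ref{lemma:uniform}), the route taken there is \emph{not} a compactness argument but $R$-theory for the sub-Markovian semigroup $P^t(A,B)=\P(\zeta_t^A=B)$ on $\Lambda$: one shows that $P^t$ is $\alpha$-positive with summable left eigenmeasure $\nu$ and positive right eigenfunction $h$, which yields $e^{\alpha t}P^t(A,\cdot)\to h(A)\,\nu(\cdot)$ and hence the Yaglom limit directly, without ever passing through tightness or subsequential limits.

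Your scheme is therefore a genuinely different approach, and as written it has a real gap at the last stage. The step ``upgrade from subsequential to full convergence'' relies on a Doeblin/coupling claim for the \emph{conditioned} chain: that two copies of $(\zeta_t)$, each conditioned on non-extinction, meet in a geometrically controlled time. Nothing in irreducibility of the unconditioned process gives you this; conditioning on survival is a size-biasing that alters the dynamics in a non-Markovian way, and in fact the paper notes (just before Proposition~\ref{prop:convexpectyaglom}) that $\nu$ is only the unique QSD with $\E_\nu|\zeta|<\infty$, so a generic ``QSD is unique by coupling'' argument cannot succeed here. Without this step you have only subsequential Yaglom limits that are QSDs, not the statement. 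The tightness step is also only sketched: the assertion that the spread probability is ``doubly-exponentially small'' and beats the denominator $e^{-\gamma t}$ is exactly the content that needs proof, and a naive union bound on $\diam(\eta_t^A)>R$ gives at best $e^{-cR}$, not something uniform in $t$ after dividing by $\P(\eta_t^A\ne\emptyset)$. Both of these issues are bypassed in the cited references by the $\alpha$-positivity machinery.
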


The absorption time $\tau^{\mu}=\inf\{t\ge 0 : \zeta_t^{\mu}=\emptyset\}$ starting from a QSD $\mu$ is exponentially distributed (by the Markov Property).
Let $\alpha$ denote the absorption rate of the limiting distribution $\nu$, that is,
\begin{equation}
\label{eq:decexpo}
\P(\zeta^\nu_t \ne \emptyset)=e^{-\alpha t}.
\end{equation}

Using Proposition~\ref{prop:existencenu} and controlling the statistical effect of picking the rightmost infected site, it is shown in~\cite{andjelrolla} that, for any infinite initial configuration $A\subseteq -\N$, the subcritical contact process seen from the rightmost point converges in distribution to $\nu$.

\medskip

In this paper we provide a more detailed description of the collection of infected regions, in any dimension.
Let $R_t \in \N$ be such that
\begin{equation}
\nonumber
1 \ll R_t \ll e^{\alpha t / d}.
\end{equation}
At time $t>0$, we group the sites in the configuration $\eta_t$ by connecting those which are at distance less than $R_t$.
Let $\C_t$ denote the set of connected components.
Each finite component $D\in\C_t$ can be identified by a pair $(x,\zeta)$, where the choice of $x$ among the sites of $D$ is arbitrary (e.g. lexicographical order for simplicity) and $\zeta=\langle D \rangle \in \Lambda$ describes the relative positions of such sites.
\begin{theorem}
\label{thm:convergence}
For the subcritical contact process on $\Z^d$, the marked point process consisting of macroscopic location and relative position of infected sites satisfies
\label{main}
\[
\sum_{(x,\zeta)\in\C_t} \delta_{ \displaystyle ( e^{ -{\alpha} t /d }x,\zeta ) } \longrightarrow \mathrm{Poisson}(\rho \, \dd x \times \nu)
\text{ in distribution as }
t \to \infty
,
\]
where $\dd x$ is the Lebesgue measure, $\nu$ is given by~(\ref{eq:yaglom}) and $0<\rho<\infty$ is given by~(\ref{eq:rho}) below.
\end{theorem}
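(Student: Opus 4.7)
My plan is to verify Kallenberg's criterion for weak convergence of a simple point process to a Poisson point process on $\R^d\times\Lambda$: for test sets of the form $A\times B$ with $A\subset\R^d$ bounded, $|\partial A|=0$, and $\nu(\partial B)=0$, it suffices to establish
\[
\E[\Xi_t(A\times B)]\to \rho\,|A|\,\nu(B)
\quad\text{and}\quad
\P(\Xi_t(A\times B)=0)\to e^{-\rho|A|\nu(B)},
\]
where $\Xi_t$ denotes the scaled marked point process in the statement.

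The first step is spatial tightness of clusters. In the subcritical phase, standard exponential decay of correlations provides sharp tail bounds on the spatial extent of $\eta_s^x$ for $s\le t$ and, by duality, on the ancestry region of any site infected at time~$t$. Consequently, for any mesoscopic scale $L_t$ with $R_t\ll L_t\ll e^{\alpha t/d}$ (for instance $L_t=t^2$), with probability tending to one every cluster in $\C_t$ meeting the relevant macroscopic region has diameter at most $L_t/3$. Tile $e^{\alpha t/d}A$ by disjoint cubes $(Q_i)$ of side $L_t$; clusters with lex-first site in $Q_i$ are determined, off the negligible bad event, by graphical-construction marks in a concentric cube $\widetilde Q_i$ of side $3L_t$. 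Inserting buffer gaps between cubes (at a factor-two loss in volume, reinstated by sandwiching) makes the $\widetilde Q_i$ mutually disjoint and the per-cube contributions independent.

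By translation invariance, $\E[\Xi_t(Q_i\times B)]=|Q_i|\,\rho_t(B)$, where
\[
\rho_t(B):=\P\bigl(0\text{ is lex-first in its cluster in }\C_t,\text{ and the shape of that cluster lies in }B\bigr).
\]
The analytic core of the proof is to establish
\[
e^{\alpha t}\rho_t(B)\longrightarrow \rho\,\nu(B) \quad \text{as } t\to\infty,
\]
for some $\rho\in(0,\infty)$ independent of $B$. The factor $\nu(B)$ should come from the Yaglom limit of Proposition~\ref{prop:existencenu} applied to the shape of the unique alive cluster through the origin, once one checks that the lex-first marking is asymptotically compatible with the translation-invariant measure $\nu$. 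The exponential rate $\alpha$ and the positive prefactor $\rho$ must come from sharp asymptotics of $\P(\zeta_t^A\ne\emptyset)$ for finite $A\subset\Z^d$.

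Granted this, $\Xi_t(A\times B)=\sum_i \Xi_t(Q_i\times B)$ is essentially a sum of independent $\{0,1\}$-valued variables with individual parameter $\sim L_t^d\rho_t(B)\to 0$ and total mean converging to $\rho|A|\nu(B)$; a standard triangular-array Poisson limit theorem (or Chen--Stein) then gives convergence of $\Xi_t(A\times B)$ to $\mathrm{Poisson}(\rho|A|\nu(B))$, and the same argument on disjoint unions yields convergence of all finite-dimensional distributions, completing Kallenberg's criterion. I expect the main obstacle to be the sharp asymptotic $e^{\alpha t}\rho_t(B)\to \rho\,\nu(B)$: Proposition~\ref{prop:existencenu} only delivers the Yaglom limit for the shape and the identity $\P(\zeta_t^\nu\ne\emptyset)=e^{-\alpha t}$ from the QSD, whereas what is needed is the pointwise estimate $\P(\zeta_t^A\ne\emptyset)\sim c(A)e^{-\alpha t}$ for finite $A$ with a summable prefactor $c(\cdot)$ on $\Lambda$, together with a combinatorial identity linking expected infected mass to expected cluster count. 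This should rest either on a spectral analysis of the subcritical semigroup acting on functions on $\Lambda$, or on a regeneration argument in the spirit of \cite{andjelrolla}.
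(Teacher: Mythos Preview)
Your outline is structurally the same as the paper's: partition space into mesoscopic boxes, show approximate independence between boxes, and verify a Poisson convergence criterion via void probabilities and means. The paper uses precisely this scheme (with the criterion from Resnick rather than Kallenberg), so at the level of architecture you are on target.

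The genuine gap is the step you yourself flag, and you underestimate how far Proposition~\ref{prop:existencenu} is from what you need. That proposition gives the Yaglom limit for $\zeta_t^A$ with $A$ a \emph{fixed finite set}. Your quantity $\rho_t(B)$ concerns the process started from $\Z^d$ and conditioned on an infection surviving near the origin; this is not the same conditioning, and the paper devotes its entire Section~3 to bridging exactly this gap. The difficulty is that by searching for an infected region you have biased the graphical construction in a neighbourhood of the surviving path, so one cannot simply invoke the Yaglom limit. The paper's resolution is to construct a ``minimal path'' to the infected site and find on it a renewal-type ``break point'' $(Y,S)$ after which the process, restricted to a box of radius $\beta t$, is distributed as a fresh single-site contact process conditioned on survival (Lemma~3.1); only then does Proposition~\ref{prop:existencenu} apply. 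This yields the paper's Proposition~\ref{prop:cvtonu}, which is the correct substitute for your appeal to the Yaglom limit. The constant $\rho$ then comes not from a spectral argument alone but from the identity $\P(\eta_t\cap B_{R_t}\ne\emptyset)=\E|\eta_t\cap B_{R_t}|\big/\E\bigl[|\eta_t\cap B_{R_t}|\,\big|\,\eta_t\cap B_{R_t}\ne\emptyset\bigr]$, and the denominator requires a separate uniform-integrability argument (the paper's Proposition~\ref{prop:convexpectyaglom}) to conclude $\rho=h(\{\o\})/\E_\nu|\zeta|$.

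Two smaller points. First, ``standard exponential decay'' does not immediately control cluster diameter \emph{conditionally} on the rare event that a box is occupied; the paper handles this and the inter-box independence via its ``good point'' estimates, which survive the conditioning because their failure probability is super-exponentially small. Second, your parametrisation via ``$0$ is lex-first in its cluster'' is workable but introduces a size-biasing issue you would have to unwind; the paper sidesteps this by working directly with the event $\{\eta_t\cap B_{R_t}\ne\emptyset\}$ and the shape $\langle\eta_t\cap B_{R_t}\rangle$, which is cleaner.
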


The difficulty in studying the infected regions comes from the interplay between two factors: the lack of finite-range dependence of the contact process, and the fact that by searching for infected regions one finds pieces of space where the process is not typical.
To handle these effects simultaneously, we use a classification of ``good points'' and the construction of a ``minimal path,'' described in \textsection\ref{sec:goodpoint} and \textsection\ref{sec:minimalpath}.
Both ideas were introduced in~\cite{andjelrolla}, however they relied on the planar topology of the one-dimensional nearest-neighbor model, and the extension to a more general setting was not straightforward.

The arguments presented here provide an analogous scaling limit for the discrete-time analogue called oriented percolation.

\bigskip

We conclude with a brief discussion about the method of proof.

Most of the work is devoted to extend the convergence stated in Proposition~\ref{prop:existencenu} to a scenario where $\eta_0=\Z^d$ instead of a fixed finite set $A$, and the conditioning is on the existence of infected sites within a large finite box.
Denote $B_r=[-r,r]^d \cap \Z^d$.
\begin{proposition}
\label{prop:cvtonu}
Let $(\eta_t)_{t\ge0}$ be the subcritical contact process on $\Z^d$.
Then
\begin{equation}
\nonumber
\law\left(\langle\eta_t\cap B_{R_t}\rangle \,\middle|\, \big. \eta_t\cap B_{R_t}\neq \emptyset\right)\xrightarrow[t\to\infty]{TV}\nu
\end{equation}
under the assumption $t^{d+1} \ll R_t \ll e^{\alpha t/2d}$.
\end{proposition}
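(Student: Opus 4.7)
The plan is to reduce the infinite initial condition $\eta_0=\Z^d$ to a finite one and then invoke Proposition~\ref{prop:existencenu}. I work throughout with the Harris graphical construction, writing $\eta_t^{\Z^d}=\bigcup_{y\in\Z^d}\eta_t^y$ under the standard monotone coupling, and set $p_t:=\P(\eta_t^0\neq\emptyset)\sim C e^{-\alpha t}$. In the subcritical phase the process has a finite propagation speed, so one can choose $C_0<\infty$ with $\P(\eta_t^0\not\subset B_{C_0 t})=o(e^{-\alpha t})$; a union bound over $y\notin B_{R_t+C_0 t}$ then shows that, up to an event of probability $o(R_t^d e^{-\alpha t})$, the trace $\eta_t\cap B_{R_t}$ is unchanged if the initial condition is replaced by the finite set $A_t:=B_{R_t+C_0 t}$. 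This reduces the problem to analysing $\eta_t^{A_t}\cap B_{R_t}$.

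Next, I partition $\eta_t^{A_t}$ into ancestry clusters via the equivalence $x\sim x'$ iff $x,x'\in\eta_t^y$ for some $y\in A_t$, and I let $N_t$ count the clusters meeting $B_{R_t}$. The aim of this step is to show that $N_t=1$ with high conditional probability given $N_t\geq 1$. The first moment satisfies $\E N_t\asymp R_t^d p_t$, since each ancestor in the bulk of $A_t$ contributes $p_t$. For the second moment, split pairs of candidate ancestors by their spatial separation: far pairs, at separation $\gg t$, become approximately independent via a subcritical exponential decoupling and contribute $O(R_t^{2d}p_t^2)$, which is $o(R_t^d p_t)$ precisely when $R_t\ll e^{\alpha t/2d}$. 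Close pairs are the sticking point: a crude bound based on union-bounding via the combined process yields $O(R_t^d t^d p_t)$, which misses the target by a factor of $t^d$. It is here that the lower-bound hypothesis $R_t\gg t^{d+1}$ and the constructions of Sections~\ref{sec:goodpoint}--\ref{sec:minimalpath} must be used: they produce for each cluster a canonical witness that rules out the spurious close-pair correlations and restores a Poisson-product joint bound of order $p_t^2$ per pair.

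Conditionally on $N_t=1$, the unique cluster $D$ equals $\eta_t^{A(D)}$ for a random ancestor set $A(D)\subset A_t$; applying Proposition~\ref{prop:existencenu} in a form uniform over localized initial configurations (a standard extension, since $A(D)$ is measurable with respect to the graphical construction near $B_{R_t}$), the conditional law of $\langle\eta_t^{A(D)}\rangle$ given survival equals $\nu+o(1)$ in total variation. The same hypothesis $R_t\gg t^{d+1}$ also guarantees that $D\subset B_{R_t}$ with conditional probability $1-o(1)$, so that restriction to $B_{R_t}$ does not alter the equivalence class $\langle D\rangle$. The main obstacle is thus the close-pair estimate in the second step: without the minimal-path and good-point machinery introduced later in the paper, direct second-moment methods miss the target by the $t^d$ factor above, and this deficit is precisely what the lower bound on $R_t$ in the hypothesis is designed to absorb.
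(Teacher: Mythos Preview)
Your proposal has a genuine gap at the final step, where you invoke Proposition~\ref{prop:existencenu} for the random ancestor set $A(D)$ and call this ``a standard extension''. It is not. The set $A(D)$ is a functional of the entire graphical construction on $[0,t]$: the event $\{A(D)=A\}$ encodes that every $y\in A$ has a descendant in $B_{R_t}$ at time $t$, that these descendants coalesce into one cluster, and that competing ancestors do not survive. Conditioning on this is \emph{not} the same as running a fresh process from the deterministic initial set $A$, and Proposition~\ref{prop:existencenu} is stated only for fixed $A$ with no uniformity in $A$. So even after you disintegrate over the value of $A(D)$, you cannot conclude that $\law(\langle\eta_t^{A(D)}\rangle\mid\text{survival})\approx\nu$. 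This selection-bias problem is precisely the difficulty the paper isolates in the introduction, and your sketch does not address it. Your second-moment step is also left incomplete: you acknowledge the close-pair term is $O(R_t^d t^d p_t)$ and then only gesture at \S\ref{sec:goodpoint}--\S\ref{sec:minimalpath} without saying what estimate those tools actually give you.

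The paper's route is different in kind. It never tries to identify $A(D)$ or to show $N_t=1$. Instead it follows the canonical path $\Gamma$ of \S\ref{sec:minimalpath} and locates a \emph{break point} $(Y,S)$ at which $Y$ is the sole infected site in $B_{2\beta t}^Y$. The decomposition~(\ref{eq:equivalence}) in the proof of Lemma~\ref{lemma:stepone} separates the selection information (which lives in $E_{y,s}^e$) from the forward evolution in $E_{y,s}^i$, yielding the exact identity $\law(\eta_t\cap B_{\beta t}^y\mid Y=y,S=s,\widehat G_y^s)=\law(\eta_{t-s}^y\mid y\rightsquigarrow L_{t-s},G_y^0)$; now the initial configuration is a single deterministic site and Proposition~\ref{prop:existencenu} applies directly. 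The hypotheses enter in places different from what you suggest: $R_t\ll e^{\alpha t/2d}$ is used only to kill the term $|B_{R_t}|e^{-\alpha(t-s)}$ from Lemma~\ref{lemma:steptwo} with $s\le t/2$, and $R_t\gg t^{d+1}$ is used only in Lemma~\ref{lemma:probabreak2} to ensure $Y\in B_{R_t-\beta t}$ so that $B_{\beta t}^Y\subset B_{R_t}$.
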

One of the central steps of the proof is to extend an idea introduced in~\cite{ezanno12,andjelrolla}, which works using Harris' graphical construction. Conditioned on the existence of an infected point at time $t$, we first locate a site $x$ whose infection percolates from time $0$ to time $t$.
From site $x$ one cannot apply Proposition~\ref{prop:existencenu} directly, because choice of $x$ itself requires extra information that interferes with the process distribution. In order to clean most of this inconvenient information, a kind of renewal space-time point is used.

\smallskip

About the constant $\rho$, in~\cite{sturmswart} it is shown that the limiting distribution $\nu$ in~(\ref{eq:yaglom}) is the unique QSD satisfying
\(
\E_\nu |\zeta|<\infty.
\)
We will further prove the following.
\begin{proposition}
\label{prop:convexpectyaglom}
Under the extra assumption that $t^{2d+1} \ll R_t \ll e^{\sqrt[3]{t}}$,
\begin{align}
\nonumber
\E\left( | \eta_t\cap B_{R_t}| \,\big|\, \eta_t\cap B_{R_t}\neq \emptyset\right)\rightarrow \E_\nu | \zeta |
.
\end{align}
\end{proposition}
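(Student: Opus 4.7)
The strategy is to upgrade Proposition~\ref{prop:cvtonu} (total variation convergence of the conditional law to $\nu$) to convergence of the expectation of the unbounded functional $\zeta\mapsto|\zeta|$, via a uniform integrability (UI) argument.

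For each fixed $K$, the map $\zeta\mapsto|\zeta|\wedge K$ is bounded on $\Lambda$, so Proposition~\ref{prop:cvtonu} directly yields
\[
\E\bigl(|\eta_t\cap B_{R_t}|\wedge K\,\big|\,\eta_t\cap B_{R_t}\ne\emptyset\bigr)\xrightarrow[t\to\infty]{}\E_\nu[|\zeta|\wedge K]\xrightarrow[K\to\infty]{}\E_\nu|\zeta|,
\]
which provides the $\liminf$ bound. The matching $\limsup$ bound reduces to the UI condition
\[
\lim_{K\to\infty}\sup_t\E\bigl(|\eta_t\cap B_{R_t}|\,\1_{|\eta_t\cap B_{R_t}|>K}\,\big|\,\eta_t\cap B_{R_t}\ne\emptyset\bigr)=0.
\]

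I would establish UI by proving a uniform-in-$t$ exponential tail bound
\[
\P\bigl(|\eta_t\cap B_{R_t}|>k\,\big|\,\eta_t\cap B_{R_t}\ne\emptyset\bigr)\le C\,e^{-ck},\qquad k\geq 1,
\]
for some $C,c>0$ and $t$ sufficiently large; UI (and in fact exponential integrability) then follows. This rests on two ingredients: (i) the subcritical phase supplies exponentially decaying cluster-size tails for the QSD itself, $\nu(|\zeta|>k)\le Ce^{-ck}$, inherited via the Yaglom limit from the subcritical exponential decay of $\P\bigl(|\eta^0_t|>k,\,\eta^0_t\ne\emptyset\bigr)$; and (ii) a quantitative refinement of the good-point/minimal-path construction underlying Proposition~\ref{prop:cvtonu}, giving control of the conditional tail \emph{uniformly} in $t$, not merely convergence in distribution. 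The rough idea for (ii) is to decompose the event $\{|\eta_t\cap B_{R_t}|>k\}\cap\{\eta_t\cap B_{R_t}\ne\emptyset\}$ according to whether the infected sites in $B_{R_t}$ stem from a single cluster of size $>k$ (absorbed by the tail of $\nu$ via Proposition~\ref{prop:cvtonu} applied with an additional size truncation) or from multiple distant clusters whose joint contribution is $o(\P(\eta_t\cap B_{R_t}\ne\emptyset))$ under the hypothesis $R_t\ll e^{\sqrt[3]{t}}$ by a union-bound argument.

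The main obstacle is (ii): promoting total variation convergence to a uniform tail bound requires revisiting the construction behind Proposition~\ref{prop:cvtonu} and running it with an extra truncation parameter tracking cluster sizes. The strengthened hypotheses $t^{2d+1}\ll R_t\ll e^{\sqrt[3]{t}}$ (versus $t^{d+1}\ll R_t\ll e^{\alpha t/2d}$ in Proposition~\ref{prop:cvtonu}) presumably encode this quantitative cost: the lower bound $t^{2d+1}$ accommodates a finer good-point classification tracking both cluster locations and cluster sizes, while the upper bound $e^{\sqrt[3]{t}}$ keeps the expected number of surviving clusters in $B_{R_t}$ small enough that the multiple-cluster contribution is genuinely negligible at the tail level, not just at the distributional level.
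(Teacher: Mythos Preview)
Your high-level framework---TV convergence from Proposition~\ref{prop:cvtonu} plus uniform integrability yields convergence of the expectation---is correct, and the paper does pass through a UI argument. The gap is in how you propose to obtain UI. Step~(ii) is circular as written: you want to bound $\P(|\eta_t\cap B_{R_t}|>k\mid\cdot\ne\emptyset)$ uniformly in $t$ by invoking Proposition~\ref{prop:cvtonu} together with the tail of $\nu$, but TV convergence only gives $\P(\cdot>k\mid\cdot\ne\emptyset)\to\nu(|\zeta|>k)$ for each \emph{fixed} $k$ as $t\to\infty$, with no uniformity in $t$; turning that into a $t$-uniform tail bound is precisely the UI statement you are trying to prove. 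Your acknowledgment that one must ``revisit the construction with an extra truncation parameter'' is correct but is not itself a plan, and ingredient~(i), exponential tails for $\nu$, is stronger than what the paper establishes (only $\E_\nu|\zeta|<\infty$, Lemma~\ref{lemma:finiteexp}).

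The paper sidesteps this by never proving UI for the full conditional law $\law(|\eta_t\cap B_{R_t}|\mid\cdot\ne\emptyset)$. Instead it decomposes the conditional \emph{expectation} directly via the break point $(Y,S)$: on the high-probability event $\{\widehat G_Y^S,\,S\le t/2,\,Y\in B_{R_t-\beta t}\}$, Lemma~\ref{lemma:stepone} identifies $\eta_t\cap B_{\beta t}^Y$ in law with a fresh copy of $\eta_{t-s}^{\o}$ conditioned on survival, and Lemma~\ref{lemma:steptwo} bounds the expected contribution outside $B_{\beta t}^Y$ by $|B_{R_t}|e^{-\alpha t/2}$. The only UI then needed is for the one-parameter family $\law(|\zeta_r^{\o}|\mid\zeta_r^{\o}\ne\emptyset)$, $r\ge0$, which is obtained (Lemma~\ref{lemma:uniform}) from $\alpha$-positivity of the sub-Markov semigroup and a ratio-limit theorem of Nummelin, using only $\E_\nu|\zeta|<\infty$. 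Finally, the strengthened hypotheses do not encode any cluster-size tracking: $R_t\ll e^{\sqrt[3]{t}}$ is there so that the crude bound $|B_{R_t}|$ is killed by the probability $e^{-c_\beta\sqrt t}$ that no break point appears before time $t/2$ (Lemma~\ref{lemma:probabreak1}), while $R_t\gg t^{2d+1}$ is there so that the crude bound $|B_{\beta t}|\le C_\beta t^d$ is killed by the probability $C_\beta t^{d+1}/R_t$ that $Y$ lands near the boundary of $B_{R_t}$ (Lemma~\ref{lemma:probabreak2}).
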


On the other hand, it is shown in~\cite{andjelrolla} that
the sub-Markovian kernel of the evolution $\zeta_t$ restricted to $\Lambda$ is $\alpha$-recurrent with summable left eigenmeasure $\nu$ and positive right eigenfunction $h$.
As a consequence, for each $A\in \Lambda$, one has not only~(\ref{eq:decexpo})
but also
\begin{equation}
\label{eq:hprefactor}
e^{\alpha t} \P \left( \zeta_t^A \ne \emptyset \right) \to h(A).
\end{equation}

Combining Propositions~\ref{prop:cvtonu} and~\ref{prop:convexpectyaglom} with the above limit, we will show that
\begin{equation}
\label{eq:rho}
\rho = \frac{h(\{\o\})}{\E_\nu |\zeta|}
.
\end{equation}

\bigskip

The rest of the paper is divided as follows.
In Section~\ref{sec:prelim} we briefly describe a graphical construction, introduce notation, state the FKG inequality, and give a definition of good points.
In Section~\ref{sec:infected} we prove Propositions~\ref{prop:cvtonu} and~\ref{prop:convexpectyaglom}.
In Section~\ref{sec:scaling} we prove Theorem~\ref{thm:convergence}.

\section{Preliminaries and main tools}
\label{sec:prelim}

We start gathering common notation for the reader's convenience. In \textsection\ref{sec:fkg} we will present Harris' graphical construction with further notation, and in \textsection\ref{sec:goodpoint} we will define good points, which will be a central concept used in the rest of the paper.

From~(\ref{eq:decexpo}),~(\ref{eq:hprefactor}) and duality, we have the useful estimates:
\begin{equation}
\label{eq:usefulestimates}
e^{-\alpha t} \geq \P(\eta_t^x \ne \emptyset) = \P(x\in\eta_t^{\Z^d}) \sim h(\{\o\}) e^{-\alpha t}
\qquad
\text{ as }
t \to \infty
.
\end{equation}

The
letter $\beta$ denotes an arbitrary number that will be enlarged throughout the proof.
The product $\beta t$ means $\lfloor \beta t \rfloor$. Some definitions such as that of good point are implicitly parametrized by $\beta$.

For $r>0$, we define the balls $\mathcal{B}_r=\{x\in\R^d: \|x\|_\infty\leq r\}$ and $B_r= \mathcal{B}_r\cap\Z^d$.
We also define the discrete sphere $D_r$ by $B_r \setminus B_{r-1}$. The associated balls and discrete sphere centered at $y$ are denoted by $\mathcal{B}_r^y$, $B_r^y$ and $D_r^y$.

We denote by $|A|$ the cardinal of $A \subset \Z^d$ and by $|\Psi|$ the Lebesgue measure of $\Psi \subset \R^d$.
The \emph{total variation distance} of probability measures will be denoted $\| \mu - \mu' \|=\sup_{A}{|\mu(A)-\mu'(A)|}$.

The letters $c$ and $C$ denote different positive and finite numbers each time they appear. They may depend on $\lambda$ and $d$, but neither on $t$ nor on $R_t$. When they depend on $\beta$ we write $c_\beta$ and $C_\beta$.
Finally, $a\sim b$ means $\frac{a}{b}\to 1$, $a \lesssim b$ means $\limsup \frac{a}{b} \leq 1$, $a \ll b$ means $\frac{a}{b}\to 0$, and $a \approx b$ means $\|a-b\| \to 0$.

\subsection{Graphical representation and FKG inequality}
\label{sec:fkg}

Define $\LL^d = \Z^d + \{\pm \frac{1}{3} e_i,i=1,\dots,d\}$, and let $U$ be a Poisson point process in $\R^{d}\times \R$ with intensity given by
\(
\big( \sum_{y\in\Z^d} \delta_y + \sum_{y\in\LL^d} \lambda\delta_y \big) \times \dd t
.
\)
Notice that $U \subseteq (\Z^d\cup \LL^d )\times \R$.
Let $(\Omega, \F, \P)$ be the underlying probability space.
For nearest neighbors $x,y \in \Z^d$ we write $U^{x,y} = \{t: (x+ \frac{y-x}{3},t) \in U\}$ and $U^x = \{t: (x,t) \in U\}$.
For $t\in U^x$ we say that there is a \emph{recovery mark} at site $x$ at time $t$, and for $t\in U^{x,y}$ we say that there is an \emph{infection arrow} from $x$ to $y$ at time $t$.

Given two space-time points $(y,s)$ and $(x,t)$, we define a \emph{path from $(y,s)$ to $(x,t)$} as a finite sequence $(x_0,t_0), \dots, (x_k,t_k)$
with $x_0=y$, $x_k=x$,
$s=t_0 \le t_1 \le \dots \le t_k = t$ with the following property.
For each $i=1,\dots,k$, the $i$-th segment $[(x_{i-1},t_{i-1}),(x_i,t_i)]$ is either \emph{vertical}, that is, $x_{i}=x_{i-1}$, or \emph{horizontal}, that is, $\|x_{i}-x_{i-1}\|_1=1$ and $t_{i}=t_{i-1}$.
Horizontal segments are also referred to as \emph{jumps}.
If all horizontal segments satisfy $t_i = t_{i-1} \in U^{x_{i-1},x_i}$ then such path is also called a \emph{$\lambda$-path}.
If, in addition, all vertical segments satisfy $(t_{i-1},t_{i}] \cap U^{x_i}=\emptyset$ we call it an \emph{open path} from $(y,s)$ to $(x,t)$.

The existence of an open path from $(y,s)$ to $(x,t)$ is denoted by
$ (y,s) \rightsquigarrow (x,t)$.
For two sets $C$, $D\subseteq \Z^d\times \R$, we use
$\{C \rightsquigarrow D\} = \{ (y,s) \rightsquigarrow (x,t) \text{ for some } (y,s)\in C, (x,t)\in D\}$.
We denote by $L_t$ the set $\Z^d \times \{t\}$.

For $A \subseteq \Z^d$, we define $\eta_{s,t}^A$ by
\begin{equation}
\label{def.contact.process}
\eta_{s,t}^A = \{ x\in\Z^d : (A\times\{s\}) \rightsquigarrow (x,t)\}.
\end{equation}

When $s=0$ we omit it in the subindex. We use $(\eta_t)_{t\ge0}$ for the process defined by~(\ref{def.contact.process}) with $A=\Z^d$, so $(\eta_t)$ is a contact process with parameter $\lambda$.
The process $(\zeta_t)_{t\ge0}$ given by $\zeta_t = \langle \eta_t \rangle$ is this contact process modulo translations. Both of them are Markov. Note that if $A$ is finite, the same holds for $\eta_t^A$ and $\zeta_t^A$ for every $t\ge 0$, almost surely. Also note that $\emptyset$ is absorbing for both processes. When $A$ is a singleton~$\{y\}$ we write $\eta^y_t$ and $\zeta^y_t$.

We use $\omega$ for a \emph{configuration} of points in $\R^{d+1}$ and $\omega_\delta$, $\omega_\lambda$ for its restrictions to $\Z^d\times \R$ and $\LL^d\times \R$, respectively.
We write $\omega\ge \omega'$ if $\omega_\lambda \supseteq \omega'_\lambda$ and $\omega_\delta \subseteq \omega'_\delta$.
We slightly abuse the notation and identify a set of configurations~$Q$ with the event ``$U \in Q$.''

A minor topological technicality needs to be mentioned.
Consider the space of locally finite configurations with the Skorohod topology: two configurations are close if they have the same number of points in a large space-time box and the positions of the points are approximately the same.
In the sequel we assume that all events considered have zero-probability boundaries under this topology.
The important fact is that events of the form $\{E \rightsquigarrow F \}$ are measurable and satisfy this condition, as long as $E$ and $F$ are closed subsets of $\Z^d \times \R$.

\begin{definition*}
A set of configurations $Q$ is \emph{increasing} if $\omega \ge \omega' \in Q$ implies $\omega \in Q$.
\end{definition*}

\begin{theorem*}
[FKG Inequality]
If $Q_1$ and $Q_2$ are increasing, then
\(
 \P(Q_1 \cap Q_2) \ge \P(Q_1)\P(Q_2).
\)
\end{theorem*}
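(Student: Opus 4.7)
The plan is to reduce the statement to the classical Harris-FKG inequality for product Bernoulli measures through a standard discretization argument, after first flipping the orientation of recovery marks to turn the compound monotonicity ($\omega_\lambda \supseteq \omega_\lambda'$ together with $\omega_\delta \subseteq \omega_\delta'$) into ordinary componentwise inclusion.

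First I would fix a large space-time box $\mathcal{R} \subseteq \R^{d+1}$ and a fine mesh of side $\epsilon$. For each cell $C$ of the mesh intersecting $\Z^d \times \R$ I introduce a Bernoulli variable $X_C$ indicating whether $U \cap C \neq \emptyset$, with parameter $1-e^{-|C|}$; similarly $Y_C$ for cells intersecting $\LL^d \times \R$, with parameter $1-e^{-\lambda |C|}$. These variables are jointly independent, and as $\epsilon \to 0$ their joint law approximates $U$ restricted to $\mathcal{R}$ in the Skorohod topology. Setting $\tilde X_C := 1 - X_C$, any event $\{U \in Q\}$ with $Q$ increasing in the sense of the definition becomes, after this sign flip, an increasing event in the vector $(\tilde X,Y)$ of independent Bernoullis. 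The classical Harris-FKG inequality for product measures then yields the desired inequality at the level of the discretized, box-restricted approximations.

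The second step is to pass to the limit $\epsilon \to 0$ with $\mathcal{R}$ fixed, and afterwards $\mathcal{R} \uparrow \R^{d+1}$. The standing assumption that all events considered have zero-probability boundaries in the Skorohod topology is precisely what is needed so that probabilities of the discretized events converge to $\P(Q_i)$. For the final truncation, any increasing event $Q$ can be written as an increasing limit of its restrictions to configurations supported in $\mathcal{R}$, and inner regularity of the Poisson law transfers convergence of $\P(Q_i^{\mathcal{R}})$ to $\P(Q_i)$.

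I expect the only real technical nuisance to be this passage to the limit: a generic increasing event need not be a continuous functional of the configuration, and one has to invoke the zero-boundary assumption to control approximation by discrete analogues. A cleaner alternative, should the direct route get unwieldy, is to quote a general FKG theorem for Poisson point processes and exploit independence: conditionally on $\omega_\lambda$, both $\1_{Q_i}$ are decreasing in $\omega_\delta$, so FKG for the single Poisson process $\omega_\delta$ gives $\E[\1_{Q_1}\1_{Q_2}\mid \omega_\lambda] \ge \E[\1_{Q_1}\mid \omega_\lambda]\E[\1_{Q_2}\mid \omega_\lambda]$; the two conditional expectations are then increasing in $\omega_\lambda$, and a second application of FKG for $\omega_\lambda$ closes the argument.
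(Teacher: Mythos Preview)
The paper does not actually prove this statement; it simply cites \cite[\S\S 2.1--2.2]{bezgrimexpo} for both the proof and the precise definitions. Your proposal therefore goes well beyond what the paper does: you sketch two genuine routes (discretization to Harris--FKG for product Bernoullis, and a two-stage conditional FKG argument exploiting the independence of $\omega_\lambda$ and $\omega_\delta$), both of which are standard and correct. The sign-flip on recovery marks to convert the mixed order into coordinatewise monotonicity is exactly the right observation, and your remarks about the zero-boundary assumption being what allows the limit to go through are accurate. In short, either of your approaches would constitute an honest proof, whereas the paper is content to outsource this entirely to the literature.
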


See \cite[\textsection\textsection 2.1-2.2]{bezgrimexpo} for proofs and precise definitions.

\subsection{Good point}
\label{sec:goodpoint}

We say that the space-time point $(z,s)$ is a \emph{good point} if every $\lambda$-path starting from $(z,s)$ makes less than $\beta t $ jumps during $[s,s+t]$, and we denote by $G_z^s$ the corresponding event.
The definition of good point depends on $\beta$ and $t$ but we omit them in the notation $G_z^s$.
We define
\begin{align*}
G^I(A) = \text{``} G_x^s \text{ occurs for all } {x\in A}, s\in I \text{''}
,
\quad
\widetilde{G}_z^s = G^s(D_{ 2\beta t}^z)
\quad
\text{and}
\quad
\widehat{G}_z^s = \text{``}G_z^s \text{ and }\widetilde{G}_z^s \text{''}
.
\end{align*}

\begin{lemma}
For every $\rho<\infty$, for $\beta$ and $t$ large enough depending on $\rho$,
\[\P({G}_{\o}^0)\geq 1-e^{-\rho t}.\]
\end{lemma}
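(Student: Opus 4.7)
My plan is a first-moment (Markov) bound. The first step is to observe that if the complement of $G_{\o}^0$ occurs, then some $\lambda$-path from $(\o,0)$ accumulates at least $\beta t$ jumps in $[0,t]$; truncating such a path at its $(\beta t)$-th jump shows that the event $(G_{\o}^0)^c$ is contained in $\{Z \ge 1\}$, where
\[
Z = \#\bigl\{ ((x_1,t_1),\dots,(x_{\beta t}, t_{\beta t})) : \|x_{i}-x_{i-1}\|_1 = 1,\ 0<t_1<\dots<t_{\beta t}\le t,\ t_i \in U^{x_{i-1},x_i} \bigr\}
\]
with the convention $x_0 = \o$. Markov's inequality then yields $\P\bigl((G_{\o}^0)^c\bigr) \le \E Z$.

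The second step is to compute $\E Z$. Summing over the $(2d)^{\beta t}$ nearest-neighbor lattice walks $(x_0,\ldots,x_{\beta t})$ starting at $\o$, and applying Campbell's formula to the independent rate-$\lambda$ Poisson processes $U^{x_{i-1},x_i}$, the expected number of valid ordered arrival-time tuples for each such walk is
\[
\int_{0<t_1<\dots<t_{\beta t}<t} \lambda^{\beta t}\,\dd t_1\cdots \dd t_{\beta t} \;=\; \frac{(\lambda t)^{\beta t}}{(\beta t)!}.
\]
This remains correct whether or not the walk traverses some directed edge more than once, since the relevant factorial moments of a Poisson($\lambda t$) variable equal $(\lambda t)^k/k!$. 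Summing over walks gives $\E Z \le (2d\lambda t)^{\beta t}/(\beta t)!$.

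The final step is Stirling's inequality $(\beta t)! \ge (\beta t / e)^{\beta t}$, which yields
\[
\E Z \le (2d\lambda e/\beta)^{\beta t} = \exp\bigl\{-\beta t\,\log\!\bigl(\beta/(2d\lambda e)\bigr)\bigr\},
\]
at most $e^{-\rho t}$ as soon as $\beta\log(\beta/(2d\lambda e)) \ge \rho$, which holds for all $\beta$ sufficiently large depending on $\rho$. There is no serious obstacle here: everything rests on independence of the Poisson processes on distinct directed edges together with a standard Stirling estimate. The only points requiring some care are the truncation in the first step (so that the bad event really is captured by the integer-valued counting variable $Z$) and the bookkeeping when a walk repeats a directed edge in the second step.
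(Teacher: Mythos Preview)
Your argument is correct and is precisely the standard first-moment (Peierls-type) bound for this kind of statement. The paper does not actually give a proof here; it simply remarks that the proof of \cite[Lemma~4]{andjelrolla} works in any dimension, and that proof is essentially the one you wrote: count $\lambda$-paths of length $\beta t$ by summing over $(2d)^{\beta t}$ walks, use the factorial-moment identity for Poisson processes to get $(\lambda t)^{\beta t}/(\beta t)!$ per walk, and finish with Stirling. Your remark about repeated directed edges is the right observation (formally it is the multivariate Mecke/Campbell formula rather than a single-variable factorial moment, but the conclusion is the same). The only cosmetic point is that in this paper $\beta t$ stands for $\lfloor \beta t \rfloor$, which is harmless for large $t$.
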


\begin{proof}
The proof in \cite[Lemma~4]{andjelrolla} works on any dimension.
\end{proof}

\begin{corollary}
\label{cor:goodwithout}
For every $\rho<\infty$, for $\beta$ and $t$ large enough depending on $\rho$,
\[\P(\widehat{G}_{\o}^0)\geq 1-e^{-\rho t}.\]
\end{corollary}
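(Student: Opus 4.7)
The plan is to deduce the corollary from the preceding lemma by a straightforward union bound, absorbing the polynomial cost of counting points in the discrete sphere $D_{2\beta t}^\o$ into a slightly worse exponent.

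First I would observe that, by definition, the event $\widehat{G}_\o^0$ fails only if either $G_\o^0$ fails or $G_x^0$ fails for some $x\in D_{2\beta t}^\o$. Hence
\[
\P\bigl((\widehat{G}_\o^0)^c\bigr)\leq \P\bigl((G_\o^0)^c\bigr)+\sum_{x\in D_{2\beta t}^\o}\P\bigl((G_x^0)^c\bigr).
\]
The Poisson point process $U$ used in the graphical construction is translation-invariant in space, so $\P(G_x^0)=\P(G_\o^0)$ for every $x\in\Z^d$. Consequently the right-hand side is bounded by $(1+|D_{2\beta t}^\o|)\,\P((G_\o^0)^c)$.

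Next I would estimate the cardinality of the discrete sphere: since $D_r=B_r\setminus B_{r-1}$, one has $|D_{2\beta t}^\o|\leq (4\beta t+1)^d\leq C_\beta t^d$ for $t$ large enough. To beat this polynomial factor, I would apply the previous lemma with a strictly larger parameter, say $\rho'=\rho+1$, which gives $\P((G_\o^0)^c)\leq e^{-\rho' t}=e^{-(\rho+1)t}$ for $\beta$ and $t$ large enough depending on $\rho'$. Combining,
\[
\P\bigl((\widehat{G}_\o^0)^c\bigr)\leq (1+C_\beta t^d)\,e^{-(\rho+1)t}\leq e^{-\rho t}
\]
for $t$ large enough, since $(1+C_\beta t^d)e^{-t}\to 0$. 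This yields the desired lower bound.

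I do not expect any serious obstacle here: the only tiny point to be careful about is that the constant $\beta$ (which enters through the definition of $G_z^s$ and the size of $D_{2\beta t}^\o$) has to be chosen before invoking the lemma with parameter $\rho'$, but since the lemma is uniform in $\beta$ once $\beta$ and $t$ are taken large enough depending on $\rho'$, this poses no difficulty. Everything reduces to an $L^\infty$ union bound enabled by translation invariance of Harris' graphical construction.
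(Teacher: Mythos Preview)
Your proposal is correct and follows essentially the same approach as the paper: a union bound over $\{\o\}\cup D_{2\beta t}^\o$ using translation invariance, absorbing the polynomial factor by invoking the lemma with a larger exponent (the paper phrases this as ``increase $\beta$ to obtain the desired $\rho$''). There is nothing to add.
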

\begin{proof}
Use union bound over $D_{2\beta t}$ and increase $\beta$ to obtain the desired $\rho$.
\end{proof}

\begin{corollary}
\label{cor:goodconditoned}
Let $\rho<\infty$. If $\beta$ and $t$ are large enough, then
\[
\P\left(G^{[0,t]}(A) \,\middle|\, \big. \eta_s \cap B \neq \emptyset \right)\geq 1-e^{-\rho t}
\quad
\text{ and }
\quad
\P\left(G^{[0,t]}(A) \,\middle|\, \big. \eta_s^B \neq \emptyset \right)\geq 1-e^{-\rho t}
\]
for every pair of sets $A$ and $B$ with $|A|\leq e^{\rho t}$ and $|B|\ge 1$, and every $s\le t$.
\end{corollary}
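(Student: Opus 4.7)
The plan is to bound the numerator and denominator of the conditional probability separately. Since the event $G^{[0,t]}(A)^c$ does not depend on the conditioning event, the numerator can be controlled by a single estimate that handles both cases simultaneously; the main issue there is the uncountable range of starting times $s\in[0,t]$, which I will collapse to a single starting time through a monotonicity observation.

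First I would introduce the auxiliary event $H_x$ that every $\lambda$-path from $(x,0)$ makes fewer than $\beta t$ jumps during $[0,2t]$. Given any $\lambda$-path starting at $(x,s)$ with $s\in[0,t]$, prepending the vertical segment at $x$ between times $0$ and $s$ produces a $\lambda$-path starting at $(x,0)$ with the same set of jumps, and all those jumps lie in $[s,s+t]\subseteq[0,2t]$; this uses that $\lambda$-paths impose no constraint on their vertical segments. Hence $H_x\subseteq G_x^s$ for every $s\in[0,t]$, and therefore $G^{[0,t]}(A)\supseteq\bigcap_{x\in A}H_x$. Moreover $H_x$ is nothing but the good-point event at $(x,0)$ with the parameters $\beta$ and $t$ replaced by $\beta/2$ and $2t$ respectively, so applying the lemma above with these rescaled parameters together with translation invariance gives $\P(H_x^c)\leq e^{-\rho' t}$ for any prescribed $\rho'$, once $\beta$ and $t$ are large enough. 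A union bound over $|A|\leq e^{\rho t}$ then yields
\[
\P\bigl(G^{[0,t]}(A)^c\bigr)\leq e^{-(\rho'-\rho)t}.
\]

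For the denominator I would use the pointwise estimate \eqref{eq:usefulestimates}. Picking any $b\in B$, monotonicity of the contact process in its initial condition gives $\P(\eta_s\cap B\neq\emptyset)\geq\P(b\in\eta_s^{\Z^d})$ in the first case and $\P(\eta_s^B\neq\emptyset)\geq\P(\eta_s^b\neq\emptyset)$ in the second, and by duality these two lower bounds coincide. By \eqref{eq:usefulestimates} both are at least $c\,e^{-\alpha s}\geq c\,e^{-\alpha t}$ uniformly in $s\leq t$ for $t$ large. Dividing, the conditional probability of failure is at most $c^{-1}e^{-(\rho'-\rho-\alpha)t}$, which is smaller than $e^{-\rho t}$ once $\rho'$ is chosen sufficiently large (and $\beta$ enlarged accordingly to achieve this $\rho'$).

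The step I expect to require the most care is the monotonicity reduction, because the family of good-point events $G_x^s$ is parametrized by a continuous variable $s$; the fact that it can be dominated by a single event at $s=0$ relies critically on the observation that $\lambda$-paths impose no constraint on their vertical segments, so inserting an initial waiting segment at $x$ is always legal and preserves the jump count.
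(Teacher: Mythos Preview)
Your proof is correct and follows essentially the same approach as the paper's. Your monotonicity reduction via prepending a vertical segment is exactly the paper's observation that ``if $(x,0)$ is $(\beta,2t)$-good, then $(x,s)$ is $(2\beta,t)$-good for all $s\in[0,t]$,'' and the union bound on the numerator together with the survival lower bound~\eqref{eq:usefulestimates} on the denominator match as well; your treatment of the denominator with an explicit constant $c$ is in fact slightly more careful than the paper's.
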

\begin{proof} Let $\rho<\infty$ fixed. For $t$ and $\beta$ large enough, we have $\P\left(G^0_x \right)\geq 1-e^{-(2\rho+\alpha) t}$.
Using this and~(\ref{eq:usefulestimates}) we get
\begin{align*}
\P\left(G^0(A)\,\big|\,\eta_s \cap B \neq \emptyset\right)
&\geq 1-\frac{\P( \neg G^0(A))}{\P(\eta_s \cap B \neq \emptyset)}\geq 1-\frac{e^{\rho t} \cdot e^{-(2\rho+\alpha) t}}{e^{-\alpha t}}= 1-e^{-\rho t}
.
\end{align*}
Finally, we observe that, if $(x,0)$ is $(\beta,2t)$-good, then $(x,s)$ is $(2\beta,t)$-good for all $s\in[0,t]$. This proves the first inequality. The second one is proved the same way.
\end{proof}

The fact that good points have so high probability creates independence between sets of sites infected by $2\beta t$-distant points, and makes it possible to obtain the following bounds.

\begin{corollary}
\label{lowboundsurvival}
If $R_t \ll e^{\alpha t/d}$ then, for $t$ large enough, we have
\[\P\left(\eta_t\cap B_{R_t} \neq \emptyset\right)\geq c \, \frac{R_t^d}{t^{d}} \, e^{-\alpha t}.\]
\end{corollary}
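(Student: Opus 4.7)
The plan is to apply the Paley--Zygmund second-moment inequality to a sparse sum of indicators associated with well-separated points where a forward good point controls the contact process. Fix $\rho>\alpha$ and choose $\beta$ large enough that Corollary~\ref{cor:goodwithout} gives $\P(G_y^0)\geq 1-e^{-\rho t}$ uniformly in $y$. Pick a $3\beta t$-separated subset $S\subseteq B_{R_t-\beta t}$ with $|S|\geq c_\beta R_t^d/t^d$ (possible since $t\ll R_t$), and set
\[
X_y=\1_{\{\eta_t^y\neq\emptyset\}\cap G_y^0},\qquad N=\sum_{y\in S}X_y.
\]
On the event $\{X_y=1\}$, the good point confines every $\lambda$-path from $(y,0)$ to $B_{\beta t}^y\subseteq B_{R_t}$, so $\emptyset\neq \eta_t^y\subseteq \eta_t\cap B_{R_t}$. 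Thus $\{N\geq 1\}\subseteq\{\eta_t\cap B_{R_t}\neq\emptyset\}$ and it suffices to bound $\P(N\geq 1)$ from below.

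For the first moment, by~(\ref{eq:usefulestimates}) we have $\P(\eta_t^y\neq\emptyset)\sim h(\{\o\})e^{-\alpha t}$ and $\P(\neg G_y^0)\leq e^{-\rho t}\ll e^{-\alpha t}$, so $\E X_y\geq c e^{-\alpha t}$ and $\E N\geq c_\beta R_t^d t^{-d}e^{-\alpha t}$. For the second moment, the key observation is that on $G_y^0$ every $\lambda$-path from $(y,0)$ lives in $\mathcal{B}_{\beta t}^y\times[0,t]$, and therefore both $G_y^0$ and $\{\eta_t^y\neq\emptyset\}$ are determined by the restriction of the Poisson field $U$ to this slab. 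For $y\neq y'$ in $S$ the slabs are disjoint (as $\|y-y'\|_\infty\geq 3\beta t$), so $X_y$ and $X_{y'}$ are independent and
\[
\E N^2=\E N+\sum_{y\neq y'}\E X_y\,\E X_{y'}\leq \E N+(\E N)^2.
\]

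Since $R_t\ll e^{\alpha t/d}$ implies $\E N\ll 1$, Paley--Zygmund then yields
\[
\P\bigl(\eta_t\cap B_{R_t}\neq\emptyset\bigr)\geq \P(N\geq 1)\geq \frac{(\E N)^2}{\E N+(\E N)^2}\sim \E N\geq c\,\frac{R_t^d}{t^d}\,e^{-\alpha t}
\]
for $t$ large, as required.

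The main obstacle is the measurability claim behind the independence step: a priori, both $G_y^0$ and $\{\eta_t^y\neq\emptyset\}$ are statements about all forward $\lambda$-paths and thus appear to depend on the entire Poisson field. On $G_y^0$, however, no $\lambda$-path escapes the slab around $y$, so the indicator $X_y$ is indeed measurable with respect to $U|_{\mathcal{B}_{\beta t}^y\times[0,t]}$; making this localization rigorous (for instance by rewriting $G_y^0\cap\{\eta_t^y\neq\emptyset\}$ as a local event involving only the paths contained in the slab) is the one non-routine step. The rest of the argument consists of straightforward first- and second-moment computations.
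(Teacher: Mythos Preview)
Your proof is correct and rests on the same key idea as the paper's: good points localize the survival event $\{\eta_t^y\neq\emptyset\}\cap G_y^0$ to the slab $\mathcal{B}_{\beta t}^y\times[0,t]$, so that for well-separated $y$ these indicators are independent. The paper packages this by conditioning on the global event $G^0(A)$ (which has probability $\to1$) and applying the product formula $1-\prod_{x\in A}(1-p_x)\sim |A|\,h(\{\o\})e^{-\alpha t}$, whereas you incorporate $G_y^0$ into each indicator and apply Paley--Zygmund; since you in fact obtain full (not just pairwise) independence, the second-moment step is slightly more than needed, but the argument is sound.
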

\begin{proof}
Let $\rho>\alpha$ and choose $\beta$ as in Corollary~\ref{cor:goodconditoned}.
Take $A=\{2\beta t y:y\in B_{(R_t-3\beta t)/2\beta t}\}$.
Conditioning on $G^0(A)$, that is, the event where all points in $A$ are good, survivals of points in $A$ are independent (because two points in $A$ are at distance at least $2\beta t$).
Since $\P(G^0(A)) \to 1$ at $t\to\infty$, we get
\begin{align*}
\P\left(\eta_t\cap B_{R_t}\neq\emptyset\right)
& \gtrsim
\P\left(\eta_t^A\cap B_{R_t}\neq\emptyset\,\middle|\, G^0(A)\right)
\\
& =
1 - \prod_{x\in A} \P\big( \eta_t^x = \emptyset \,\big|\, G^0(A) \big)
\\
& \geq
1 - \prod_{x\in A} 1 -
\big[
\underbrace{\P\big( \eta_t^x \ne \emptyset \big)}_{\sim h(\{\o\})e^{-\alpha t}}
-
\underbrace{\P\big(\neg G^0(A)\big)}_{\lesssim e^{-\rho t}}
\big]
\\
& \sim
|A|h(\{\o\})e^{-\alpha t}
,
\end{align*}
because $e^{-\rho t} \ll e^{-\alpha t}$ and $|A|e^{-\alpha t} \ll 1$.
We conclude using $|A|\sim (R_t/\beta t)^d$.\end{proof}
\begin{corollary}
\label{cor:boundring}
If $R_t \ll e^{\alpha t/d}$ then, for $\beta$ and $t$ large enough, we have
\[\P\left(\eta_t\cap B_{R_t}\setminus B_{R_t-3\beta t}\neq\emptyset\,\big|\,\eta_t\cap B_{R_t}\neq\emptyset\right)\leq C_{\beta} \frac{t^{d+1}}{R_t}.\]
\end{corollary}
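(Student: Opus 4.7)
The plan is to estimate the numerator and the denominator separately: the numerator by a simple union bound combined with the pointwise estimate~(\ref{eq:usefulestimates}), and the denominator by Corollary~\ref{lowboundsurvival}. This is essentially a volume-vs-surface comparison: the shell $B_{R_t}\setminus B_{R_t-3\beta t}$ has $|D_{R_t}|$-type size of order $\beta t \cdot R_t^{d-1}$ (up to a dimensional constant), whereas the full box $B_{R_t}$ has volume of order $R_t^d$, and this geometric ratio $t/R_t$ combines with the extra factor $t^d$ coming from the lower bound of Corollary~\ref{lowboundsurvival} to give the claimed $t^{d+1}/R_t$.

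More precisely, first I would bound
\[
\P\bigl(\eta_t\cap B_{R_t}\setminus B_{R_t-3\beta t}\neq\emptyset\bigr)
\leq
\sum_{x\in B_{R_t}\setminus B_{R_t-3\beta t}}\P(x\in\eta_t^{\Z^d})
\leq C_\beta\, R_t^{d-1} t \, e^{-\alpha t},
\]
using~(\ref{eq:usefulestimates}) to replace each term by $e^{-\alpha t}$ and the elementary estimate $|B_{R_t}\setminus B_{R_t-3\beta t}|\leq C_\beta R_t^{d-1} t$ valid for $R_t$ large.

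Then, combining with the lower bound
\[
\P\bigl(\eta_t\cap B_{R_t}\neq\emptyset\bigr)\geq c\,\frac{R_t^d}{t^d}\,e^{-\alpha t}
\]
from Corollary~\ref{lowboundsurvival}, one obtains
\[
\P\bigl(\eta_t\cap B_{R_t}\setminus B_{R_t-3\beta t}\neq\emptyset\,\big|\,\eta_t\cap B_{R_t}\neq\emptyset\bigr)
\leq
\frac{C_\beta R_t^{d-1} t \, e^{-\alpha t}}{c R_t^d t^{-d} e^{-\alpha t}}
= C_\beta \frac{t^{d+1}}{R_t},
\]
as required.

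There is essentially no obstacle here, since both ingredients are already in place; the only small point to be careful about is that the $e^{-\alpha t}$ factors cancel exactly, and the $\beta$-dependence gets absorbed into $C_\beta$ through the shell volume (the lower bound in Corollary~\ref{lowboundsurvival} involves a constant independent of $\beta$, so the $\beta$ appears only in the numerator). The hypothesis $R_t\ll e^{\alpha t/d}$ is not actually used in the ratio computation but was inherited from Corollary~\ref{lowboundsurvival}, and the assumption that $t$ is large ensures both the linearization $|B_{R_t}\setminus B_{R_t-3\beta t}|\leq C_\beta R_t^{d-1} t$ and the validity of the useful estimates.
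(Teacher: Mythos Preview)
Your proof is correct and follows essentially the same approach as the paper: a union bound on the shell using~(\ref{eq:usefulestimates}) for the numerator, Corollary~\ref{lowboundsurvival} for the denominator, and the shell volume estimate $|B_{R_t}\setminus B_{R_t-3\beta t}|\leq C_\beta R_t^{d-1}t$. The paper's version is slightly more explicit about the constants (writing $2d(2R_t)^{d-1}(3\beta t)$ for the shell volume), but the argument is identical.
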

\begin{proof}
Using Corollary~\ref{lowboundsurvival} and~(\ref{eq:usefulestimates}),
\begin{multline*}
\P\left(\eta_t\cap B_{R_t}\setminus B_{R_t-3\beta t}\neq\emptyset\,\big|\,\eta_t\cap B_{R_t}\neq\emptyset\right)
\leq
\\
\leq
\frac{|B_{R_t}\setminus B_{R_t-3\beta t}|\cdot \P(x\in\eta_t)}{\P\left(\eta_t\cap B_{R_t}\neq\emptyset\right)}
\leq \frac{2d(2R_t)^{d-1}(3\beta t)e^{-\alpha t}}{c\, R_t^d t^{-d} e^{-\alpha t}}=C_{\beta}\frac{t^{d+1}}{R_t}.
\tag*{\qedhere}
\end{multline*}
\end{proof}

\section{Configuration at infected regions}
\label{sec:infected}

In this section we prove Propositions~\ref{prop:cvtonu} and~\ref{prop:convexpectyaglom}. In \textsection\ref{sec:minimalpath} we describe the construction of a minimal path, and in \textsection\ref{sec:breakpoints} we define a kind of renewal point on this path, called break point. We then state some important properties of the the break point and use them to prove Proposition~\ref{prop:cvtonu}. In \textsection\ref{sec:propbreakpoints} we prove the statistical properties about the break point, in \textsection\ref{sec:bpexistence} we prove properties about its space-time location and finally in \textsection\ref{sec:expectation} we prove Proposition~\ref{prop:convexpectyaglom}.

If the reader is wondering why the $R_t \ll e^{\alpha t/2d}$ requirement for Proposition~\ref{prop:cvtonu}, we note that the same argument works for $R_t \ll e^{\frac{\alpha t}{d+\epsilon}}$. Likewise, the assumption $R_t \ll e^{\sqrt[3]{t}}$ for Proposition~\ref{prop:convexpectyaglom} can be lessened to $R_t \ll e^{o(t/\log t)}$.
In any case, these assumptions are immaterial because, once Theorem~\ref{thm:convergence} is proved for some $1 \ll R_t \ll e^{\alpha t/d}$, the result can be bootstrapped to any other such sequence.

\subsection{Construction of the work path}
\label{sec:minimalpath}

Let $\precc$ denote a well-order of $\Z^d$, and read $w \prec z$ as $w$ has higher priority or precedes $z$.
In the following we take $A \subseteq \Z^d$ a non-empty initial configuration. Let $\D$ be a closed subset of $\Z^d\times\R^+$; we want to define a ``minimal'' path $\Gamma^{A\to \D}$ from $A\times\{0\}$ to $\D$.

If $A \times \{0\} \rightsquigarrow \D$, define
\[
X^{A\to \D}=\min\{x\in A, (x,0) \rightsquigarrow \D\} \in \Z^d
\]
as the highest-priority site whose infection survives up to the space-time region $\D$. We want to define a ``minimal path'' in a convenient way, which is slightly more delicate than in the one dimensional case~\cite{andjelrolla}.

The path $\Gamma^{A\to \D}$ will be described by a finite sequence $\{(x_0,t_0),\ldots,(x_n,t_n)\}$.
Take $x_0=X^{A\to \D}$ and $t_0=0$.
Notice that $(x_0,t_0^+) \rightsquigarrow \D$.
Suppose $(x_k,t_k)$ has been defined, and satisfies $(x_k,t_k^+) \rightsquigarrow \D$.
Take $\tau_{k+1}>t_k$ as the instant of the next infection arrow from $x_k$.
If $\{x_k\}\times [t_k,\tau_{k+1}]$ intersects $\D$, we finish the construction by taking $x_{k+1}=x_k$,
$t_{k+1}=\min\{ t\ge t_k : (x_k,t)\in \D \}$,
and $n=k+1$.

Otherwise, in order to continue the construction, we let $t_{k+1}=\tau_{k+1}$ and choose $x_{k+1}$ as follows.
Let $y_k$ be the destination of the outgoing arrow from $x_k$ at time $t_{k+1}$

Since $(x_k,t_{k+1})\rightsquigarrow \D$, either $(y_k,t_{k+1})\rightsquigarrow \D$ or $(x_k,t_{k+1}^+)\rightsquigarrow \D$ must occur.
If only one occurs, we choose the corresponding site.
If both occur, we choose the site with higher priority.
More precisely, we have four cases (illustrated in Figure~\ref{fig:minimalpath}):

\begin{enumerate}
\item[(a)] if $y_k \prec x_k$ and $(y_k,t_{k+1})\rightsquigarrow \D$, then $x_{k+1}=y_k$,
\item[(b)] if $y_k \prec x_k$ but $(y_k,t_{k+1})\not\rightsquigarrow \D$, then $x_{k+1}=x_k$,
\item[(c)] if $x_k \prec y_k$ and $(x_k,t_{k+1}^+)\rightsquigarrow \D$, then $x_{k+1}=x_k$,
\item[(d)] if $x_k \prec y_k$ but $(x_k,t_{k+1}^+)\not\rightsquigarrow \D$, then $x_{k+1}=y_k$.
\end{enumerate}
We finally define the path $\Gamma^{A\to\D}:[0,t_{n}]\to \Z^d$ by
$\Gamma^{A\to \D}(s)=x_k$,
$s\in [t_k,t_{k+1})$.

In the following, we restrict to $A=\Z^d$.
If $\eta_t\cap B_{R_t}\neq\emptyset$, we write $\Gamma=\Gamma^{\Z^d \to B_{R_t}\times\{t\}}$.
For $y\in\eta_s$, we let $\Gamma_{y,s}=\Gamma^{\Z^d \to (y,s)}$. We also write $X$ and $X_{y,s}$ for the respective starting points.

\begin{figure}[hb!]
{\centering\begin{tikzpicture}[scale=1.2]

%AXES
\draw [->,gray] (-3.7,0)--(5.5,0);%horizontal
\draw [->,gray] (-3.5,-0.5)--(-3.5,5.5);%vertical
\draw (-3.5,5.5) node[right,gray]{\small{$\R^+$}};
\draw (5.5,0) node[right,gray]{\small{$\Z^d$}};
\draw[dotted] (-3.5,4.9)--(5.3,4.9);
\draw (-3.5,4.9) node{-} node[left]{$t$};
\draw (-3.5,1) node{-} node[left]{$t_1$};
\draw (-3.5,2.25) node{-} node[left]{$t_2$};
\draw (-3.5,2.8) node{-} node[left]{$t_3$};
\draw (-3.5,4) node{-} node[left]{$t_4$};
\draw (-3.5,4.4) node{-} node[left]{$t_5$};
\foreach \x in {-3,-2,-1,0,1,2,3,4,5}
\draw[gray] (\x,0)--(\x,5);

 % Les morts
 \foreach \x in {(-3,1.3),(-3,2.8),(-2,1.95),(-2,3.25),(-1,0.9),(-1,4.3),(0,2.8),(1,3.8),(1,0.6), (2,4.6),(2,0.9),(3,0.7),(3,1.8), (4,1.2), (4,3.7),(0,4.4),(-2,4),(4,4.7),(5,2),(5,3.8)}
\draw[gray] \x  node{$\times$};
%\draw (0.5,-0.2) node[right] {$\times$ recovery};

%les naissances
\foreach \x/\y in { {(-3,1.75)}/{(-2,1.75)}, {(-3,3)}/{(-2,3)}, {(-2,1.4)}/{(-1,1.4)},{(0,1)}/{(1,1)},{(1,2.25)}/{(2,2.25)},{(1,3)}/{(2,3)},{(1,4.8)}/{(2,4.8)},{(2,1.6)}/{(3,1.6)},{(2,2.8)}/{(3,2.8)},{(2,4)}/{(3,4)},{(3,3.5)}/{(4,3.5)},{(3,4.4)}/{(4,4.4)},{(4,0.9)}/{(5,0.9)},{(4,4.2)}/{(5,4.2)}}
\draw [->,>=stealth,thick,gray] \x--\y;

\foreach \x/\y in { {(-3,0.75)}/{(-2,0.75)},  {(-2,2.75)}/{(-1,2.75)},{(-2,3.7)}/{(-1,3.7)},{(-1,1.8)}/{(0,1.8)},{(-1,3.5)}/{(0,3.5)},{(0,3.2)}/{(1,3.2)},{(0,4.2)}/{(1,4.2)},{(1,0.4)}/{(2,0.4)},{(1,1.2)}/{(2,1.2)},{(3,0.3)}/{(4,0.3)},{(3,3.1)}/{(4,3.1)},{(4,2.6)}/{(5,2.6)}}
\draw [<-,>=stealth,thick,gray] \x--\y;

%\draw (0.45,-0.47) node[right] {$\leftrightarrow$ infection ($\lambda$)};

%numerotation
\node[draw,circle] (6) at (-2,-0.4){6};
\node[draw,circle] (5) at (-1,-0.4){5};
\node[draw,circle] (7) at (-3,-0.4){7};
\node[draw,circle] (2) at (0,-0.4){2};
\node[draw,circle] (1) at (2,-0.4){1};
\node[draw,circle] (3) at (1,-0.4){3};
\node[draw,circle] (8) at (3,-0.4){8};
\node[draw,circle] (4) at (4,-0.4){4};
\node[draw,circle] (9) at (5,-0.4){9};
\node[left] at (0,1){$d$};
\node[left] at (1,2.25){$a$};
\node[left] at (3,4.4){$b$};
\node[left] at (2,2.8){$c$};

\foreach \a/\b in {{(0,0)}/{(0,1)},{(0,1)}/{(1,1)},{(0,1)}/{(0,1.8)},{(1,1)}/{(1,2.25)},{(0,1.8)}/{(-1,1.8)},{(1,2.25)}/{(2,2.25)},
{(-1,1.8)}/{(-1,2.75)},{(0,1.8)}/{(0,2.8)},{(-1,2.75)}/{(-2,2.75)},
{(2,2.25)}/{(2,2.8)},
{(2,2.8)}/{(2,4.6)},{(3,4)}/{(3,5)},{(3,4)}/{(2,4)},
{(-1,2.75)}/{(-1,3.5)},{(-2,2.75)}/{(-2,3.25)},
{(-1,3.5)}/{(-1,4.3)}}
\draw[black,line width=1.5pt,opacity=0.5] \a--\b;
\draw[black,line width=1.5pt,opacity=0.5] (-1,3.7)--(-2,3.7)--(-2,4);
\draw[black,line width=1.5pt,opacity=0.5] (3,4.4)--(4,4.4)--(4,4.7);
%chemin

%extinction de 1
\draw[black,line width=1.5pt,opacity=0.5] (2,0)--(2,0.9);
\draw[black,line width=1.5pt,opacity=0.5] (2,0.4)--(1,0.4)--(1,0.6);
\draw[mygreen,->,line width=1.5pt,opacity=1] (2,0)--(2,0.4);

%minimal path
\draw[red,line width=3pt,dashed,opacity=0.5] (0,0)--(0,1)--(1,1)--(1,2.25)--(2,2.25)--(2,4)--(3,4)--(3,5);

\draw[mygreen,->,line width=1.5pt,opacity=1] (0,1)--(0,1.4);
\draw[mygreen,->,line width=1.5pt,opacity=1] (3,4.4)--(3.4,4.4);
\draw[mygreen,->,line width=1.5pt,opacity=1] (2,4)--(2,4.4);
\draw[color=blue,->,line width=1.5pt,opacity=1] (2,2.8)--(2.2,2.8);
\draw[color=blue,->,line width=1.5pt,opacity=1] (1,2.25)--(1,2.45);

%tous les sites commencent avec une bleu
\draw[color=blue,->,line width=1pt,opacity=1] (-3,0)--(-3,0.2);
\draw[color=blue,->,line width=1pt,opacity=1] (-2,0)--(-2,0.2);
\draw[color=blue,->,line width=1pt,opacity=1] (-1,0)--(-1,0.2);
\draw[color=blue,->,line width=1pt,opacity=1] (1,0)--(1,0.2);
\draw[color=blue,->,line width=1pt,opacity=1] (3,0)--(3,0.2);
\draw[color=blue,->,line width=1pt,opacity=1] (4,0)--(4,0.2);
\draw[color=blue,->,line width=1pt,opacity=1] (5,0)--(5,0.2);
\end{tikzpicture}\par}
\caption{(Color online)
Minimal path.
The sites are numbered according to the order $\precc$. \ding{193} is the highest priority point connected to $L_t$. Among all paths from $L_0$, the curve $\Gamma$ (in dashed red) is the ``minimal'' path according to the rules (a), (b), (c), and (d). At (\ding{193},$t_1$), we have an arrow to \ding{194} which leads to a lower priority point but we still follow the arrow because (\ding{193},$t_1^+$) is not connected to $L_t$ (case d). At (\ding{194},$t_2$), we have an arrow to \ding{192} which leads to a higher priority point and this one is connected to $L_t$ (case a). At (\ding{192},$t_3$), there is an arrow to \ding{199} which leads to a lower priority point and (\ding{192},$t_3^+$) is connected to $L_t$ (case c) so we do not follow the arrow. At (\ding{199},$t_5$), there is an arrow to \ding{195} which has higher priority but (\ding{195},$t_5$) is not connected to $L_t$ (case b).}
\label{fig:minimalpath}
\end{figure}

\subsection{Break point and consequences}
\label{sec:breakpoints}

The space-time point $(y,s)$ is called \emph{a break point} if for every $x\in B_{2\beta t}^y\setminus\{y\}$, $L_0 \not\rightsquigarrow (x,s)$. Suppose $\eta_t\cap B_{R_t}\neq \emptyset$, so that $\Gamma$ is well defined. Let
\[S=\inf\{\, s \in [0,t]: s=t \text{ or }(\Gamma(s),s)\text{ is a break point} \,\}\]
be the time of the first break point on $\Gamma$, and let $Y=\Gamma(S)$ be its spatial location.
We will refer to $(Y,S)$ as \emph{the break point}.

This definition provides control on the configuration $\eta_t$ through the following lemmas.

\begin{lemma}
\label{lemma:stepone}
For all $s\in [0,t)$ and $y\in B_{R_t-\beta t}$,
\[\law \left(\eta_t\cap B_{\beta t}^y \,\big|\, Y=y, S=s, \widehat{G}_y^s\right) = \law \left(\eta_{t-s}^{y} \,\big|\, y\leadsto L_{t-s}, G_y^0\right).\]
\end{lemma}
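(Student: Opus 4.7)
The plan is to decouple past from future by combining the break-point property at $(y,s)$ with the good-point confinement $\widehat G_y^s$, to identify $\eta_t\cap B_{\beta t}^y$ with the restarted process $\eta_{s,t}^y$, and then to use the spatial--temporal product structure of the Poisson graphical construction to read off the conditional law.

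\textbf{Localization step.} On $\{Y=y,S=s,\widehat G_y^s\}$ I claim $\eta_t\cap B_{\beta t}^y=\eta_{s,t}^y$ almost surely. The inclusion $\supseteq$ is $G_y^s$: every open path from $(y,s)$ makes fewer than $\beta t$ jumps during $[s,s+t]$, hence stays in $B_{\beta t}^y$. For $\subseteq$, follow any open path from $L_0$ to a site $x\in B_{\beta t}^y$ at time $t$ backwards to time $s$, landing at some $z$ with $L_0\rightsquigarrow(z,s)$. The break-point property excludes $z\in B_{2\beta t}^y\setminus\{y\}$, while $\widetilde G_y^s$ excludes $z\notin B_{2\beta t}^y$, because any $\lambda$-path from $(z',s)$ with $z'\in D_{2\beta t}^y$ makes fewer than $\beta t$ jumps in $[s,s+t]$, whereas the $\ell^1$-distance from $\Z^d\setminus B_{2\beta t}^y$ to $B_{\beta t}^y$ is at least $\beta t+1$. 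Hence $z=y$.

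\textbf{Decomposition and factorization.} Split $U$ into three independent pieces: $U^-$ supported on $\Z^d\times[0,s]$, $U^{\mathrm{in}}$ on $B_{\beta t}^y\times(s,\infty)$, and $U^{\mathrm{out}}$ on $(\Z^d\setminus B_{\beta t}^y)\times(s,\infty)$. The jump-count estimates of the previous step give $G_y^s\in\sigma(U^{\mathrm{in}})$ and $\widetilde G_y^s\in\sigma(U^{\mathrm{out}})$, and on the event $G_y^s$ the random set $\eta_{s,t}^y$ is $\sigma(U^{\mathrm{in}})$-measurable; the break-point property at $(y,s)$ is clearly $\sigma(U^-)$-measurable. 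The central claim is that $\{Y=y,S=s,\widehat G_y^s\}$ factorizes as $\mathcal B\cap\mathcal C$, where $\mathcal B\in\sigma(U^-,U^{\mathrm{out}})$ collects the break-point property at $(y,s)$, the event $\widetilde G_y^s$, and the statement that $\Gamma$ visits $(y,s)$ before reaching any break point, while $\mathcal C=G_y^s\cap\{y\leadsto L_{t-s}\}\in\sigma(U^{\mathrm{in}})$ encodes the inner good-point event together with the survival clause (which is equivalent under $G_y^s$ to $\{(y,s)\rightsquigarrow B_{R_t}\times\{t\}\}$ whenever $B_{\beta t}^y\subseteq B_{R_t}$). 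Granted this factorization, independence yields $\law(\eta_{s,t}^y\mid\mathcal B\cap\mathcal C)=\law(\eta_{s,t}^y\mid\mathcal C)$, and time-homogeneity converts the right-hand side into $\law(\eta_{t-s}^y\mid G_y^0,\,y\leadsto L_{t-s})$, as desired.

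\textbf{Main obstacle.} The delicate point is the factorization above, namely verifying that the condition ``$\Gamma$ reaches $(y,s)$ as its first break point'' is truly $\sigma(U^-,U^{\mathrm{out}})$-measurable. Each minimality choice along the pre-$s$ portion of $\Gamma$ tests a forward connection $(x,r)\rightsquigarrow B_{R_t}\times\{t\}$, which a priori depends on $U^{\mathrm{in}}$. The needed dichotomy is that, given the break-point property at $(y,s)$ together with $\widetilde G_y^s$, such a connection either factors through $(y,s)$---in which case it is witnessed by the survival clause of $\mathcal C$---or admits a realization by a path avoiding $B_{\beta t}^y\times(s,\infty)$ altogether, and is therefore $\sigma(U^-,U^{\mathrm{out}})$-measurable. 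Making this dichotomy rigorous (and handling the minor boundary caveat $B_{\beta t}^y\subseteq B_{R_t}$, which is automatic for $y$ in the bulk of $B_{R_t}$) is the technical heart of the argument.
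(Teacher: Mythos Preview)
Your strategy is exactly the paper's: localize $\eta_t\cap B_{\beta t}^y$ to $\eta_{s,t}^y$ using the two good-point shields, then factorize the conditioning event into an ``inner'' piece $G_y^s\cap\{(y,s)\rightsquigarrow L_t\}$ and an ``outer'' piece living on $U^-,U^{\mathrm{out}}$. The localization step and the measurability assertions $G_y^s\in\sigma(U^{\mathrm{in}})$, $\widetilde G_y^s\in\sigma(U^{\mathrm{out}})$ are correct, and your dichotomy (under $H_2\cap\widetilde G_y^s$, any open path from the past to $B_{R_t}\times\{t\}$ either passes through $(y,s)$ or avoids $B_{\beta t}^y\times(s,t]$) is exactly the mechanism the paper uses.

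The gap is precisely where you say it is, and it is genuine: you define $\mathcal B$ via ``$\Gamma$ visits $(y,s)$ before reaching any break point,'' which references the full path $\Gamma$ and is therefore not \emph{a priori} $\sigma(U^-,U^{\mathrm{out}})$-measurable; you then declare that making the dichotomy rigorous ``is the technical heart'' without carrying it out. The paper closes this by writing the event explicitly. Along the auxiliary path $\Gamma_{y,s}$ (which is $\sigma(U^-)$-measurable) one records the space-time points that were \emph{rejected} for failing to connect to $(y,s)$: the $x_k$ in case~(d), the $y_k$ in case~(b), and the sites $x\prec X_{y,s}$ at time~$0$. Let $\mathcal A''$ be their offspring at time~$s$ and set $H_4=\{\mathcal A''\not\rightsquigarrow B_{R_t}\times\{t\}\}$. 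One then has the exact identity
\[
\{Y=y,\,S=s,\,\widehat G_y^s\}=\{(y,s)\rightsquigarrow L_t\}\cap H_1\cap H_2\cap H_3\cap H_4\cap\widetilde G_y^s\cap G_y^s,
\]
with $H_1,H_2,H_3$ depending only on $U^-$. Your dichotomy is what makes $H_4\cap\widetilde G_y^s$ measurable with respect to $E^e_{y,s}$: under $H_2$ one has $\mathcal A''\cap B_{2\beta t}^y=\emptyset$, and under $\widetilde G_y^s$ paths from $\mathcal A''\times\{s\}$ cannot enter $B_{\beta t}^y$. So the missing piece is not a new idea but the concrete identification of \emph{which} forward tests matter (those at rejected candidates) and the resulting equality above, without which the factorization $\mathcal B\cap\mathcal C$ is only a heuristic. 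The boundary caveat you flag is real; the paper only ever invokes the lemma for $y\in B_{R_t-\beta t}$, where it is automatic.
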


\begin{lemma}
\label{lemma:steptwo}
For all $s\in [0,t]$ and $y\in\Z^d$,
\[\E\left( |\eta_t \cap B_{R_t} \setminus B_{\beta t}^y| \,\big|\, Y=y, S=s, \widehat{G}_y^s\right) \leq |B_{R_t}| e^{-\alpha(t-s)}.\]
\end{lemma}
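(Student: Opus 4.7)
The plan is to combine a geometric reduction (using the break-point and goodness conditions) with the Markov renewal structure at the break point.

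First, I would show that on the event $E=\{Y=y, S=s, \widehat{G}_y^s\}$, any infection at a site $z \in B_{R_t}\setminus B_{\beta t}^y$ at time $t$ must have reached $z$ via an open path whose position at time $s$ lies outside $B_{2\beta t}^y$. Indeed, any open path from $L_0$ to $(z,t)$ passes at time $s$ through some $w\in\eta_s$. The break-point condition forces $w\in\{y\}\cup (\Z^d\setminus B_{2\beta t}^y)$. If $w=y$, the sub-path from $(y,s)$ to $(z,t)$ is a $\lambda$-path from $(y,s)$ containing at least $\|z-y\|_\infty > \beta t$ jumps inside the window $[s,t]\subset[s,s+t]$, contradicting $G_y^s$. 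Hence $w\notin B_{2\beta t}^y$, and so $z\in\eta_{s,t}^{\Z^d\setminus B_{2\beta t}^y}$ on $E$.

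This yields the pointwise domination
\[
|\eta_t\cap B_{R_t}\setminus B_{\beta t}^y|\cdot\1_E \leq |\eta_{s,t}^{\Z^d\setminus B_{2\beta t}^y}\cap B_{R_t}|\cdot\1_E \leq |\eta_{s,t}^{\Z^d}\cap B_{R_t}|\cdot\1_E.
\]
By the Markov property of Harris' construction, the right-hand side is a function of $U$ restricted to times in $[s,t]$, hence independent of $\F_s$. Using translation invariance together with duality and (\ref{eq:usefulestimates}), one has $\P(z\in\eta_{s,t}^{\Z^d})=\P(\eta_{t-s}^z\neq\emptyset)\leq e^{-\alpha(t-s)}$, so $\E|\eta_{s,t}^{\Z^d}\cap B_{R_t}|\leq |B_{R_t}|\,e^{-\alpha(t-s)}$.

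The main obstacle is transferring this unconditional bound to the conditional expectation given $E$. The $\F_s$-measurable piece of $E$, namely ``$(y,s)$ is a break point'', is automatically independent of the future functional and so poses no problem. The event $\widehat{G}_y^s$ is decreasing in the configuration order on $U$ while $|\eta_{s,t}^{\Z^d}\cap B_{R_t}|$ is increasing, so FKG yields the correct direction of inequality. The delicate remaining ingredient is the constraint that $\Gamma$ passes through $(y,s)$ with no earlier break point, since the minimal-path construction involves lookahead into $[s,t]$ and is not monotone. The plan is to exploit the renewal character of the break point: after time $s$, any infection within $B_{2\beta t}^y$ must originate from $(y,s)$, which by $G_y^s$ is confined to $B_{\beta t}^y$. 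This should decouple the $\Gamma$-dependent constraints from the configuration $\eta_{s,t}^{\Z^d\setminus B_{2\beta t}^y}\cap(B_{R_t}\setminus B_{\beta t}^y)$, yielding the claimed conditional bound.
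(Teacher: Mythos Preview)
Your geometric reduction in the first paragraph is correct and matches the paper's: on $E$ one has $\eta_t\cap B_{R_t}\setminus B_{\beta t}^y\subseteq \eta_{s,t}^{\mathcal{A}'}\cap B_{R_t}$ with $\mathcal{A}'=\eta_s\setminus\{y\}\subseteq\Z^d\setminus B_{2\beta t}^y$. You also correctly identify the obstacle: the event $\{Y=y,S=s\}$ looks into $[s,t]$ through the minimal-path construction.

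The gap is in your final paragraph. The ``renewal/decoupling'' heuristic is wrong as stated. The post-$s$ content of $\{Y=y,S=s\}$ is \emph{not} confined to the interior $B_{\beta t}^y\times(s,t]$. In the paper's notation it splits as $\{(y,s)\rightsquigarrow L_t\}\cap H_4$, where $H_4$ says that the higher-priority branches abandoned by $\Gamma_{y,s}$ (their time-$s$ offspring $\mathcal{A}''$) do \emph{not} reach $B_{R_t}\times\{t\}$. This $H_4$ is an exterior event, and it is directly a statement about $\eta_{s,t}^{\mathcal{A}''}\cap B_{R_t}$, hence genuinely correlated with the quantity you want to bound---there is no decoupling. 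Worse, the other post-$s$ piece $\{(y,s)\rightsquigarrow L_t\}$ is \emph{increasing}, so if you try to apply FKG directly to the full event $E$ against the increasing variable $|\eta_{s,t}\cap B_{R_t}|$, the inequality goes the wrong way.

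What the paper actually does, and what your sketch is missing, is a two-step separation. First a \emph{spatial} split: on $\widehat{G}_y^s$, the pair $\{(y,s)\rightsquigarrow L_t\}\cap G_y^s$ depends only on the interior cylinder $E^i_{y,s}=\mathcal{B}_{\beta t}^y\times(s,2t]$, while both $H_4\cap\widetilde{G}_y^s$ and $\eta_{s,t}^{\mathcal{A}'}\cap B_{R_t}$ depend only on the exterior $E^e_{y,s}$. This lets you drop the increasing survival event by independence rather than by FKG. Second, the remaining exterior condition $H_4\cap\widetilde{G}_y^s$ is \emph{decreasing} (it is a non-percolation event), so now FKG applies in the correct direction against the increasing variable $|\eta_{s,t}^{\mathcal{A}'}\cap B_{R_t}|$, giving $\E(|\eta_{s,t}^{\mathcal{A}'}\cap B_{R_t}|\mid H_4,\widetilde{G}_y^s)\le\E|\eta_{s,t}\cap B_{R_t}|\le|B_{R_t}|e^{-\alpha(t-s)}$. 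The explicit equivalence $\{Y=y,S=s,\widehat{G}_y^s\}=\{(y,s)\rightsquigarrow L_t\}\cap H_1\cap H_2\cap H_3\cap H_4\cap\widetilde{G}_y^s\cap G_y^s$ from the proof of Lemma~\ref{lemma:stepone} is what makes this work, and your proposal does not supply an equivalent.
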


We will need some control on the properties of $Y$ and $S$.

\begin{lemma}
\label{lemma:breakpointgood}
If $R_t = e^{O(t)}$ then, for $\beta$ and $t$ large enough,
\[
\P\left( \widehat{G}^Y_S \,\middle|\, \big. \eta_t\cap B_{R_t}\neq \emptyset\right)\geq 1-e^{- c t}.
\]
\end{lemma}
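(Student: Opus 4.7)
The plan is to compare the random break point $(Y,S)$ with a deterministic box via Corollary~\ref{cor:goodconditoned}, and to control the starting point $X$ of the minimal path via self-duality. Fix a large constant $\rho$, to be tuned at the end, and set $A = B_{R_t + 4\beta t}$. Since $R_t = e^{O(t)}$, one has $|A| \leq C(R_t + 4\beta t)^d \leq e^{\rho t}$ for $t$ large, so Corollary~\ref{cor:goodconditoned} applied with $s = t$ and $B = B_{R_t}$ yields
\[
\P\bigl(G^{[0,t]}(A) \,\bigm|\, \eta_t \cap B_{R_t} \neq \emptyset \bigr) \geq 1 - e^{-\rho t}.
\]

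The second step will show that $X \in B_{R_t + \beta t}$ with high conditional probability. If $X \notin B_{R_t + \beta t}$ and $\eta_t \cap B_{R_t} \neq \emptyset$, then the random set $\{x \in \Z^d : (x, 0) \rightsquigarrow B_{R_t} \times \{t\}\}$ must contain a point outside $B_{R_t + \beta t}$. By self-duality of the contact process this set has the same distribution as $\eta_t^{B_{R_t}}$, and by monotonicity $\eta_t^{B_{R_t}} \subset \bigcup_{y \in B_{R_t}} \eta_t^y$. Since $G_y^0$ forces $\eta_t^y \subset B_{\beta t}^y$, a union bound together with Lemma~2.1 gives
\[
\P\bigl(\eta_t^{B_{R_t}} \not\subset B_{R_t + \beta t}\bigr) \leq |B_{R_t}|\, e^{-\rho t}.
\]
Dividing by the lower bound $\P(\eta_t \cap B_{R_t} \neq \emptyset) \geq c R_t^d t^{-d} e^{-\alpha t}$ from Corollary~\ref{lowboundsurvival}, the conditional probability in question is at most $C t^d R_t^{-d} e^{-(\rho - \alpha) t}$, which is $\leq e^{-ct}$ once $\rho$ is large enough.

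Finally, on $G^{[0,t]}(A) \cap \{X \in B_{R_t + \beta t}\}$ the point $(X, 0)$ is good, so $\Gamma$ performs fewer than $\beta t$ jumps in $[0, t]$. Consequently $Y = \Gamma(S) \in B_{\beta t}^X \subset B_{R_t + 2\beta t}$, whence $\{Y\} \cup D_{2\beta t}^Y \subset B_{R_t + 4\beta t} = A$, and since $S \in [0, t]$ the event $G^{[0,t]}(A)$ forces both $G_S^Y$ and $\widetilde{G}_S^Y$, i.e.\ $\widehat{G}_S^Y$. I expect the main obstacle to be the duality step: the good-point estimate applies only to deterministic starting points, so one has to turn the problem around and work with the forward process $\eta_t^{B_{R_t}}$, whose span is small enough for a union bound over $B_{R_t}$ to leave room for division by the Corollary~\ref{lowboundsurvival} lower bound.
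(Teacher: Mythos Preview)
Your argument is correct in spirit but takes a detour that the paper avoids, and it has one hypothesis mismatch worth flagging.

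First the mismatch: you invoke Corollary~\ref{lowboundsurvival} for the lower bound $\P(\eta_t\cap B_{R_t}\neq\emptyset)\geq c\,R_t^d t^{-d}e^{-\alpha t}$, but that corollary assumes $R_t\ll e^{\alpha t/d}$, whereas the present lemma only assumes $R_t=e^{O(t)}$. The fix is trivial: replace that bound by the cruder $\P(\eta_t\cap B_{R_t}\neq\emptyset)\geq \P(\o\in\eta_t)\geq c\,e^{-\alpha t}$ from~(\ref{eq:usefulestimates}); the resulting ratio $|B_{R_t}|\,e^{-(\rho-\alpha)t}$ is still $\leq e^{-ct}$ for $\rho$ large since $|B_{R_t}|=e^{O(t)}$. (Incidentally your stated bound $Ct^dR_t^{-d}e^{-(\rho-\alpha)t}$ drops a factor $|B_{R_t}|\sim R_t^d$; the correct expression is $Ct^d e^{-(\rho-\alpha)t}$.)

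More interestingly, the paper's proof dispenses with the whole duality step. On the single event $G^{[0,t]}(B_{R_t+3\beta t})$, every site of the inner boundary $D_{R_t+\beta t}$ is good at every time in $[0,t]$. Hence any open path that is ever outside $B_{R_t+\beta t}$ and later reaches $B_{R_t}$ would have to cross that boundary at some $(z,s)$, and goodness of $(z,s)$ forbids the remaining fewer-than-$\beta t$ jumps from covering the distance $\beta t$ to $B_{R_t}$. So the entire path $\Gamma$---including its starting point $X$---is confined to $B_{R_t+\beta t}$ deterministically on that event, giving $Y\in B_{R_t+\beta t}$ and $B_{2\beta t}^Y\subset B_{R_t+3\beta t}$ in one stroke. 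Your route reaches the same conclusion via duality plus a separate union bound to locate $X$, and then uses goodness of $X$ to locate $Y$; this works, but the paper's ``confinement by good boundary'' argument is shorter and needs neither duality nor the survival lower bound.
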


\begin{lemma}
\label{lemma:probabreak1}
If $R_t = e^{O(t)}$ then, for $\beta$ and $t$ large enough,
\[ \P\left(S\leqslant \tfrac{t}{2}\,\middle|\,\eta_t\cap B_{R_t}\neq\emptyset\right)\ge 1-e^{-c_{\beta}\sqrt{t}}.\]
\end{lemma}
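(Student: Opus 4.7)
The plan is to partition $[0,t/2]$ into $K=\lfloor\sqrt{t}/2\rfloor$ sub-intervals $I_k=[s_k,s_{k+1}]$ of length $\sqrt{t}$ with $s_k=k\sqrt{t}$, and to show that in each $I_k$ a break point on $\Gamma$ appears with conditional probability at least some constant $p_0>0$. An iterated-conditioning argument will then give $\P(S>t/2\mid\eta_t\cap B_{R_t}\neq\emptyset)\leq(1-p_0)^K\leq e^{-c_\beta\sqrt{t}}$, since $p_0$ will depend on $\beta$.

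First I would invoke the good-point property (Corollary~\ref{cor:goodconditoned}) to restrict to configurations where $(\Gamma(s_k),s_k)$ is good at each $k$, so that the number of ``competitors'' of $\Gamma(s_k)$, i.e., sites in $\eta_{s_k}\cap B_{2\beta t}^{\Gamma(s_k)}\setminus\{\Gamma(s_k)\}$, is $O(t^d)$ with very high probability (using the density bound $\P(x\in\eta_{s_k})\lesssim e^{-\alpha s_k}$ from~(\ref{eq:usefulestimates})). The absence of a break point on $\Gamma$ throughout $I_k$ requires, at every $s\in I_k$, the presence of a competitor in $B_{2\beta t}^{\Gamma(s)}$. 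I would bound the probability of this by splitting into two sources: (i) descendants of competitors already present at time $s_k$, whose joint probability of surviving an extra time $\sqrt{t}$ is at most $O(t^d)\cdot e^{-\alpha\sqrt{t}}$ by a union bound plus the subcritical estimate~(\ref{eq:usefulestimates}); and (ii) ``fresh'' infections coming from $L_0$ landing in the space-time window swept by $\Gamma$ during $I_k$, whose diameter is controlled by the $\beta t$-jump bound coming from the good-point property of $\Gamma$ itself. Both contributions can be made strictly less than $1$ by choosing $\beta$ and $t$ sufficiently large, yielding the constant $p_0$.

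The main obstacle will be that $\Gamma$ is constructed using future information, namely connectivity to $B_{R_t}\times\{t\}$, so conditioning naively on a past sigma-algebra $\F_{s_k}$ and invoking the Markov property is not immediate. I expect to handle this by enlarging the conditioning to contain the graphical data that $\Gamma$ has already exploited through time $s_k$, while retaining independence from the disjoint portion of the graphical construction controlling the evolution during $I_k$; the FKG inequality should then be useful to decouple the events ``no break point in $I_k$'' across different $k$, since each of them is a decreasing event in the recovery marks and an increasing event in the infection arrows inside the relevant slab.
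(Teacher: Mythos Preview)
Your overall architecture---partition $[0,t/2]$ into $\Theta(\sqrt{t})$ sub-intervals of length $\sqrt{t}$ and show a uniform positive chance of a break point in each---matches the paper, but the proposal has two genuine gaps that the paper's argument is specifically engineered to close.

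\textbf{First gap: handling the future-dependence of $\Gamma$.} Your plan to ``enlarge the conditioning to contain the graphical data that $\Gamma$ has already exploited through time $s_k$'' and then apply FKG across slabs does not work as written. The event ``no break point in $I_k$'' is \emph{increasing} (more arrows and fewer recoveries produce more competitors), and the conditioning event $\{\eta_t\cap B_{R_t}\neq\emptyset\}$ is also increasing, so FKG gives positive correlation in the wrong direction and does not decouple the slabs. The paper instead conditions on the \emph{entire} realized path $\{\Gamma=\gamma\}$ and uses the decomposition $\{\Gamma=\gamma\}=\{\gamma\text{ open}\}\cap H_\gamma^c$, where $H_\gamma$ is increasing and measurable with respect to $U\cap D_\gamma^c$. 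The ``competitor survives near $\gamma$'' events $J_{\gamma,s,\gamma(s)}$ are also increasing and $D_\gamma^c$-measurable, so FKG now lets one \emph{drop} the conditioning on $H_\gamma^c$; after that, the events in disjoint time-slabs are genuinely independent. Your sketch never produces a decreasing conditioning event, so the FKG step has no traction.

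\textbf{Second gap: controlling displacement of $\Gamma$ within a sub-interval.} You bound the ``fresh infections'' term (ii) by saying the window swept by $\Gamma$ during $I_k$ has diameter controlled by the good-point property. But the good-point bound gives at most $\beta t$ jumps over the \emph{whole} interval $[0,t]$; nothing prevents all of those jumps from concentrating in a single $I_k$. The paper introduces \emph{favorable intervals}---sub-intervals $[s-\sqrt t,s)$ in which the jump count of $\gamma$ is at most $4\beta|s-u|$ for every $u$---and proves (Lemma~\ref{lemma:favorableintervals}) that any path with $\le\beta t$ jumps has $\Theta(\sqrt t)$ disjoint favorable intervals in $[0,t/2]$. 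Only on these intervals can one bound the break-point event uniformly by the translation-invariant event $\{F_t\rightsquigarrow E_t\}$. Without this refinement the constant $p_0$ in your scheme is not uniform in $k$.
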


\begin{lemma}
\label{lemma:probabreak2}
If $R_t \ll e^{\alpha t/d}$ then, for $\beta$ and $t$ large enough,
\[\P\left(Y\in B_{R_t-\beta t}\,\middle|\, \big. \eta_t\cap B_{R_t}\neq \emptyset\right)\geq 1-C_{\beta}\frac{t^{d+1}}{R_t}.\]
\end{lemma}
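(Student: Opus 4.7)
The plan is to relate the break-point $Y$ directly to the endpoint $\Gamma(t)\in\eta_t\cap B_{R_t}$, so that $Y$ lying near the boundary of $B_{R_t}$ forces $\eta_t$ to contain a site in a thin annulus at the boundary of $B_{R_t}$, a possibility already controlled by Corollary~\ref{cor:boundring}. The bridge is the good-point property at $(Y,S)$, which bounds how far the tail of $\Gamma$ can stray.

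First I would show that on the event $\widehat{G}_Y^S$ we have $\|\Gamma(t)-Y\|_\infty<\beta t$. Indeed, the restriction of $\Gamma$ to $[S,t]$ is an open path (hence a $\lambda$-path) starting at $(Y,S)$, and since $(Y,S)$ is a good point and $[S,t]\subseteq[S,S+t]$, this restriction makes fewer than $\beta t$ horizontal jumps, so $\|\Gamma(t)-Y\|_1<\beta t$. Consequently, on the event $\{Y\notin B_{R_t-\beta t}\}\cap\widehat{G}_Y^S$, the triangle inequality gives $\|\Gamma(t)\|_\infty>R_t-2\beta t$, so that $\Gamma(t)\in B_{R_t}\setminus B_{R_t-3\beta t}$; in particular, $\eta_t\cap(B_{R_t}\setminus B_{R_t-3\beta t})\neq\emptyset$.

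Applying Corollary~\ref{cor:boundring} then yields
\[
\P\bigl(Y\notin B_{R_t-\beta t},\ \widehat{G}_Y^S\bigm|\eta_t\cap B_{R_t}\neq\emptyset\bigr)\le C_\beta\,\frac{t^{d+1}}{R_t},
\]
and combining this with the bound $\P(\neg\widehat{G}_Y^S\mid\eta_t\cap B_{R_t}\neq\emptyset)\le e^{-ct}$ from Lemma~\ref{lemma:breakpointgood} gives the claim. The only point needing minor care is that, since $R_t$ may approach $e^{\alpha t/d}$, one must take $\beta$ large enough that the exponential rate $c$ in Lemma~\ref{lemma:breakpointgood} exceeds $\alpha/d$, so that $e^{-ct}$ is absorbed into the $C_\beta t^{d+1}/R_t$ term; otherwise the argument is essentially a direct reduction to the previously established lemmas and presents no real obstacle.
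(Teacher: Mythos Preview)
Your argument is correct. Both your proof and the paper's proof reduce to Corollary~\ref{cor:boundring} for the main term, but through different good-point events. The paper works from the \emph{starting} side: on $G^0(D_{R_t-2\beta t})\cap\{\eta_t\cap(B_{R_t}\setminus B_{R_t-3\beta t})=\emptyset\}$ one has $X\in B_{R_t-2\beta t}$ and hence the whole path $\Gamma$ (which must cross the good sphere $D_{R_t-2\beta t}$ before leaving $B_{R_t-\beta t}$) stays in $B_{R_t-\beta t}$; the error terms come from Corollary~\ref{cor:goodconditoned} and Corollary~\ref{cor:boundring}. You work from the \emph{break point} forward: goodness of $(Y,S)$ bounds $\|\Gamma(t)-Y\|_\infty<\beta t$, which forces the endpoint into the annulus whenever $Y\notin B_{R_t-\beta t}$. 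Your route is a bit more streamlined because it recycles Lemma~\ref{lemma:breakpointgood} instead of invoking a fresh good-point estimate, at the small cost of having to note that the rate $c$ in that lemma can be chosen above $\alpha/d$ (which is indeed clear from its proof via Corollary~\ref{cor:goodconditoned}); the paper's route avoids this bookkeeping by using Corollary~\ref{cor:goodconditoned} directly with a fixed set $D_{R_t-2\beta t}$.
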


We now have all the ingredients to prove Proposition~\ref{prop:cvtonu}.

\begin{proof}
[Proof of Proposition~\ref{prop:cvtonu}]
Let $R_t$ satisfy $t^{d+1} \ll R_t \ll e^{\alpha t/2d}$.
\begin{align}
\nonumber
\hspace{2mm} & \hspace{-2mm}
\limsup_{t\to\infty} \left\| \law\left(\langle\eta_t\cap B_{R_t}\rangle\,\big|\, \eta_t\cap B_{R_t}\neq \emptyset\right)-\nu \right\|
\\&
\nonumber
\phantom{a}\leq \limsup_{t\to\infty} \left\| \law\left(\langle\eta_t\cap B_{R_t}\rangle\,\big|\,\widehat{G}_Y^S, S\leq t/2,Y\in B_{R_t-\beta t}\right)-\nu\right\|
+
\\&
\qquad
\qquad
+
\limsup_{t\to\infty} \left[ 1-\P\left( \widehat{G}_Y^S, S\leq t/2,Y\in B_{R_t-\beta t} \,\big|\, \eta_t\cap B_{R_t}\neq \emptyset \right)\right]
\label{eqA}
\\&
\nonumber
\phantom{a}
\leq \limsup_{t\to\infty} \sup_{\ontop{y\in B_{R_t-\beta t}}{s\in[0,t/2]}} \left\|\law\left(\langle\eta_t\cap B_{R_t}\rangle\,\big|\,\widehat{G}_y^s, Y=y, S=s\right)-\nu\right\|
\\&
\phantom{a}
\leq \limsup_{t\to\infty} \sup_{\ontop{y\in B_{R_t-\beta t}}{s\in[0,t/2]}}
\left[
\left\|\law\left(\langle\eta_t\cap B_{\beta t}^y\rangle\,\big|\,\widehat{G}_y^s, Y=y, S=s\right)-\nu\right\|
\label{eqB}
+|B_{R_t}|e^{-\alpha(t-s)}
\right]
\\&
\phantom{a}
= \limsup_{t\to\infty} \sup_{s\in[0,t/2]}\left\|\law\left(\langle\eta_{t-s}^0\rangle\,\big|\,0\leadsto L_{t-s}, G_0^0\right)-\nu\right\|
\label{eqC}
\\&
\label{eq:lastone}
\phantom{a}
\leq
\limsup_{t\to\infty} \sup_{s\in[0,t/2]}\left\|\law\left(\langle\eta_{t-s}^0\rangle\,\big|\,0\leadsto L_{t-s}\right)-\nu\right\|
=0
.
\end{align}
The term in~(\ref{eqA}) equals zero by Lemmas~\ref{lemma:breakpointgood},~\ref{lemma:probabreak1} and~\ref{lemma:probabreak2}.
To obtain~(\ref{eqB}) we use Lemma~\ref{lemma:steptwo}.
The second term vanishes by the assumption on $R_t$.
Equality~(\ref{eqC}) follows from Lemma~\ref{lemma:stepone}.
In~(\ref{eq:lastone}), we use Corollary~\ref{cor:goodconditoned} and Proposition~\ref{prop:existencenu}.
\end{proof}

\subsection{Properties of the break point}
\label{sec:propbreakpoints}

We now prove Lemmas~\ref{lemma:stepone} and~\ref{lemma:steptwo}.

\begin{proof}
[Proof of Lemma~\ref{lemma:stepone}]
Recall the definitions of $X$, $\Gamma$,
$Y$ and $S$ from \textsection\ref{sec:minimalpath}.
Let
\begin{align*}
E^i_{y,s}&= \mathcal{B}_{\beta t}^y \times (s,2t]
\\
E^e_{y,s}&=(\R^d\times [0,2t]) \setminus E^i_{y,s}
.
\end{align*}

In order to decompose the event $\{Y=y,S=s\}$ according to $E^i_{y,s}$ and $E^e_{y,s}$, we introduce the following events. Let
\begin{align*}
H_1&=\big\{ L_0 \rightsquigarrow (y,s)\big\}\\
H_2&=\{L_0 \not\rightsquigarrow (B_{2\beta t}^y\setminus\{y\})\times \{s\}\}\\
H_3&=\{ \forall u\in[0,s),L_0 \leadsto (B_{2\beta t}^{\Gamma_{y,s}(u)}\setminus \Gamma_{y,s}(u))\times\{u\} \, \}
.
\end{align*}
$H_1$ means that $(y,s)$ is reached, $H_2$ means that $(y,s)$ is a break point, $H_3$ means that there are no other break points in $\Gamma_{y,s}$.

\begin{figure}[hb!]
{\centering\begin{tikzpicture}

%TIME
\draw[dashed] (-4,7)--(10,7);
\draw[->] (-4.2,0)--(-4.2,7.5) node[right]{$\mathbb R^+$};
\draw (-4,0) node[left]{$0-$};\draw (-4,3) node[left]{$s-$};
\draw (-4,7) node[left]{$t-$};

%SPACE
\draw (6.5,7) node[below]{$E^i_{y,s}$};
\draw (-4,0)--(10,0);
\draw (0,0) node{$|$} node[below]{$0$}; 
\draw (3,0)  node[below]{$X$};
\draw (6,3) node[below right]{$y$};
\fill[gray,opacity=0.3] (4.7,3)--(7.3,3)--(7.3,7)--(4.7,7)--cycle;
\draw [thick] (3.4,3)--(8.6,3);
\draw[<->] (3.4,3.1)--(6,3.1);
\draw (4.3,3) node[above]{$2\beta t$};
\draw[<->] (4.7,7.1)--(7.3,7.1) node[midway, above]{$y+B_{\beta t}$};
%LES PLUS GRANDS
\foreach \x in {-3.5, 2,5.5, 1.5, 3}
\draw (\x,0) node{$\bullet$};
\draw [red] plot [only marks, mark=*] coordinates {(-3,3) (0,3) (1,3) };%(8.5,0)};
%LES PLUS PETITS
\draw [black] plot [only marks, mark=square*] coordinates {(4,0) (-2,0) (-0.5,0) (8.5,0)};
%l'ensemble A''
\draw [red] plot [only marks, mark=square*] coordinates {(-1,3) (2,3)};
\draw [color=mygreen] plot [only marks, mark=square*] coordinates {(2.7,3)};
\draw [blue] plot [only marks, mark=*] coordinates {(9,3)};

%chemins ne contredisant pas la definition de X
\draw[->,>=latex,red] (-3.5,0) to[out=90,in=-120] (-3,3)to[out=60,in=-90] (-1,7);
\draw[red] (-3.5,0) to[out=90,in=-120] (-2,2);
\draw[red] (-3,3)to[out=60,in=-90] (-3.7,4.5);
\draw[->,>=latex,red] (2,0) to[out=90,in=-90] (0,3) to[out=90,in=-90] (1.2,7);
\draw[red] (-0.5,0) to[out=120,in=-120] (0,1.5);
\draw[red] (1.5,0) to[out=90,in=-90] (1.5,1.02);
\draw[red] (1.5,1.12) to[out=90,in=-120] (1,3)to[out=60,in=-120] (2,5);

%chemins contredisant
\draw[red] (4,0) to[out=90,in=-30] (3.5,0.9);%----
\draw[red] (3.4,0.95) to[out=150,in=-90] (2,3);
\draw [->,>=latex,red,dashed] (2,3) to[out=90,in=-70,dashed] (3,7);
\draw[red] (-2,0) to[out=90,in=-90] (-1,3);
\draw[->,>=latex, dashed,red] (-1,3) to[out=90,in=-90] (-3,7) ;

%chemin issu de A''
\draw[color=mygreen] (4.2,1.35) to[out=150,in=-90] (2.7,3);
\draw [->,>=latex,color=mygreen,dashed] (2.7,3) to[out=90,in=-90,dashed] (3.8,7);
\draw[blue] (4.65,1.55) to[out=0,in=-120] (9,3);
\draw [->,>=latex,blue] (9,3) to[out=60,in=-90,dashed] (9.5,7);

%evenement H1
\draw[->,>=latex, color=mypink, line width=1pt] (3,0) to[out=90,in=-90] (6,3);
%survie jusqua t
\draw[->,>=latex, color=mypink, line width=1pt] (6,3) to[out=90,in=-70] (6.3,5.5);
\draw[->,>=latex, color=mypink, line width=1pt] (6.3,5.5) to[out=110,in=-110] (5.7,7);
%evenement H2: n'existent pas
\draw[->,>=latex,dashed] (5.5,0) to[out=90,in=-90] (7,3);
\draw[->,>=latex,dashed] (8.5,0) to[out=90,in=-90] (7.7,3);
%\draw[->,>=latex, color=myblue,dashed] (1.5,0) to[out=90,in=-90] (5,3);
%evenement H3:
\draw[red] (4.65,1.55) to[out=20,in=-90] (5,2.8);

%chemins qui n'existent pas car good
\draw[->,>=latex, dotted] (3.4,3) to[out=90,in=-90] (4.6,7);
\draw[->,>=latex, dotted] (8.6,3) to[out=90,in=-90] (7.4,7);
\draw[->,>=latex, dotted] (6,3) to[out=90,in=-90] (4.8,7);
\draw[->,>=latex, dotted] (6,3) to[out=90,in=-90] (7.2,7);

%Bien que dessiné en dimension 1, ce dessin n'utilisent pas la dimension 1. il est symetrique et les chemins sy croisent
%pointilié bleu: n'existent pas pour realiser H_2
%pointillé rouge: n'existent pas pour realisé H_4 (ie pour valider definition de X)
\end{tikzpicture}\par}
\caption{(Color online)
The event $\{Y=y, S=s, \widehat{G}_y^s\}$. This event splits in three parts. The first one depends on what happens before time $s$. It consists of the occurrence of $H_1$, $H_2$, $H_3$. The set $\mathcal{A}''$ is represented by green and red squared points whereas the set $\mathcal{A}'\setminus \mathcal{A}''$ is represented by blue and red bullets. $H_1$ occurs by the first part of fuchsia path, $H_2$ consists of the non existence of the dashed black paths. Moreover $s$ is the first time with this property: $H_3$ consists of the existence of open paths from $L_0$ to $B_{2\beta t}^{\Gamma_{y,s}(u)}\setminus\Gamma_{y,s}(u)$ for all $u<s$. The rest depends on what happens between time $s$ and time $t$. The second part depends on $E^e_{y,s}$. It consists of the occurrence of $H_4$ that is the non existence of the green or red dashed paths from $\mathcal{A}''$ (squared points at time $s$) to $B_{R_t}$, and of the occurrence of ${\widetilde{G}_y^s}$: the dotted $\lambda$-paths starting from $D_{2\beta t}^y\times\{s\}$ cannot reach distance $\beta t$ by time $s+t$. The third part depends on the gray region $E^i_{y,s}$ and consists of both occurrences of $\{(y,s)\rightsquigarrow L_t\}$ (second part of fuchsia path) and $G_y^s$ that is the dotted paths starting in $(y,s)$ cannot reach $\beta t$ by time $t$.}
\label{figumbrella}
\end{figure}

We now need to describe necessary and sufficient conditions so that $(y,s)$ is on $\Gamma$, that is, so that $\Gamma_{y,s}$ and $\Gamma$ coincide on $[0,s]$.
To that end, we find a set of space-time points that must not percolate in order to ensure that $(y,s)$ is on $\Gamma$ (the red and green squares at time $s$ in Figure~\ref{figumbrella}).
Denote by $\{(x_1,t_1),\ldots,(x_n,t_n)\}$ and $\{y_1,\ldots,y_n\}$ the points of the construction procedure of $\Gamma_{y,s}$, as described in \textsection\ref{sec:minimalpath}. Let
\begin{align*}
\mathcal{A}_{y,s}^1&=\{(x_k,t_{k+1})\text{ such that } x_k \prec y_k \text{ but }(x_k,t_{k+1}^+)\not\rightsquigarrow (y,s)\}\\
\mathcal{A}_{y,s}^2&=\{(y_k,t_{k+1})\text{ such that } y_{k} \prec x_k\text{ but }(y_{k},t_{k+1})\not\rightsquigarrow (y,s)\}\\
\mathcal{A}_{y,s}^3 & = \{(x,0): x\prec X_{y,s} \}
\\
\mathcal{A}'' & = \{(z,s) : \text{ there exists }(x,u)\in \mathcal{A}_{y,s}^1\cup \mathcal{A}_{y,s}^2\cup \mathcal{A}_{y,s}^3 \text{ such that } (x,u^+)\rightsquigarrow (z,s)\}.
\end{align*}
The sets $\mathcal{A}_{y,s}^1$ and $\mathcal{A}_{y,s}^2$ are determined by the green arrows in Figure~\ref{fig:minimalpath}, corresponding to cases $d$ and $b$ respectively, while the set $\mathcal{A}_{y,s}^3$ consists of points that had higher priority than $X$ at time $0$.
They correspond to space-time points met during the construction of $\Gamma_{y,s}$ that had higher priority but were not chosen because they did not connect to $(y,s)$.
Finally, the set $\mathcal{A}''$ corresponds to the offspring of these points at time $s$.

Let $H_4$ be the event that $\mathcal{A}''\not\rightsquigarrow B_{R_t} \times \{t\}$.
We then have the following equivalence:
\begin{equation}
\label{eq:equivalence}
\{ Y=y, S=s, \widehat{G}_y^s\}= \{(y,s)\leadsto L_{t}, H_1,H_2,H_3,H_4, \widetilde{G}_y^s,G_y^s\}.
\end{equation}

Let $\mathcal{A}'=\eta_s \setminus\{y\}$; we have
\begin{align*}
\law& \left(\eta_t \cap B_{\beta t}^y \,\big|\, Y=y, S=s, \widehat{G}_y^s\right)
\\
&\phantom{aaaaaa}= \law \left(\eta_{s,t}^{\mathcal{A'}\cup \{y\}}\cap B_{\beta t}^y \,\big|\, (y,s)\leadsto L_{t}, H_1,H_2,H_3,H_4, \widetilde{G}_y^s,G_y^s\right)
\\
&\phantom{aaaaaa}= \law \left(\eta_{s,t}^{y}\cap B_{\beta t}^y\,\big|\, (y,s)\leadsto L_{t}, H_4, \widetilde{G}_y^s,{G}_y^s\right)
\end{align*}
because, on the one hand, $H_1,H_2,H_3$ depend on the graphical construction up to time $s$ and, on the other hand, occurrence of $\widetilde{G}_y^s$ and $G_y^s$ implies $\eta_{s,t}^{\mathcal{A}'\cup \{y\}}\cap B_{\beta t}^y = \eta^y_{s,t}$.

Now, $H_4 \cap \widetilde{G}_y^s$ depends on $E^e_{y,s}$, whereas $\{(y,s)\leadsto L_{t}\}\cap G_y^s$ depends on $E^i_{y,s}$.
Under the occurrence of $G_y^s$, the configuration $\eta_{s,t}^{y}\cap B_{\beta t}^y$ also depends only on $E^i_{y,s}$, so we have
\begin{multline*}
\law \left(\eta_t \cap B_{\beta t}^y \,\big|\, Y=y, S=s, \widehat{G}_y^s\right)
= \\
= \law \left( \eta_{s,t}^{y}\cap B_{\beta t}^y \,\big|\, (y,s)\leadsto L_{t}, G_y^s\right)
= \law \left(\eta_{t-s}^{y} \,\big|\, y\leadsto L_{t-s}, G_y^0\right)
.
\tag*{\qedhere}
\end{multline*}
\end{proof}

\begin{proof}
[Proof of Lemma~\ref{lemma:steptwo}]
Recall the construction and notations from the proof of Lemma~\ref{lemma:stepone}.
On the occurrence of $\left\{Y=y, S=s, \widehat{G}_y^s\right\}$, we know that $\eta_s =\mathcal{A'}\cup\{y\}$ with $\mathcal{A}'\cap B_{2\beta t}^y=\emptyset$. Given that $(y,s)$ and $D_{2\beta t}^y\times\{s\}$ are good points, we have:
\begin{align*}
\eta_t = \eta_{s,t}^{\mathcal{A}'\cup \{y\}}= \left(\eta_{s,t}^{y} \cap B_{\beta t}^y\right) \uplus \left(\eta_{s,t}^{\mathcal{A}'}\setminus B_{\beta t}^y\right).
\end{align*}

Using~(\ref{eq:equivalence}), we get
\begin{align*}
\E\left(\eta_t \cap B_{R_t} \setminus B_{\beta t }^y \,\big|\, Y=y, S=s, \widehat{G}_y^s\right)
& = \E\left( \eta_{s,t}^{\mathcal{A}'}\cap B_{R_t}\,\big|\,Y=y, S=s, \widehat{G}_y^s\right)\\
& = \E\left( \eta_{s,t}^{\mathcal{A}'}\cap B_{R_t}\,\big|\,(y,s)\leadsto L_{t}, H_4, \widetilde{G}_y^s,{G}_y^s\right)\\
& = \E\left( \eta_{s,t}^{\mathcal{A}'}\cap B_{R_t}\,\big|\,H_4,\widetilde{G}_y^s\right).
\end{align*}
The second equality follows from the fact that $H_1$, $H_2$ and $H_3$ depend on the graphical construction up to time $s$.
The last equality is due to the fact that, on the event $\widetilde{G}_y^s$, occurrence of $\eta_{s,t}^{\mathcal{A}'}\cap B_{R_t}\neq \emptyset$ depends on $E^e_{y,s}$, whereas the occurrence of both $(y,s)\leadsto L_{t}$ and $G_y^s$ depends on $E^i_{y,s}$. Now, since the random variable $\eta_{s,t}^{\mathcal{A}'}\cap B_{R_t}$ is increasing, and the occurrence of $H_4\cap \widetilde{G}_y^s$ is decreasing, by the FKG inequality we have
\[
\E\left(\eta_{s,t}^{\mathcal{A}'}\cap B_{R_t}\,\big|\,H_4,\widetilde{G}_y^s\right)
\leq \E\left( \eta_{s,t} \cap B_{R_t}\right)
\leq |B_{R_t}| e^{-\alpha(t-s)}.
\]
The last inequality is obtained applying~(\ref{eq:usefulestimates}).
\end{proof}

\subsection{Existence and location of the break point}
\label{sec:bpexistence}

We start by proving Lemmas~\ref{lemma:breakpointgood} and~\ref{lemma:probabreak2}.
Lemma~\ref{lemma:probabreak1} requires further definitions.

\begin{proof}
[Proof of Lemma~\ref{lemma:breakpointgood}]
Suppose $G^{[0,t]}(B_{R_t + 3 \beta t})$ occurs.
Since all sites in $B_{R_t + \beta t}$ are good, no open path can exit this box and then reach $B_{R_t}$ by time $t$, so $Y$ must be in this box and thus $B^Y_{2\beta t} \subseteq B_{R_t + 3 \beta t}$.
Since $S \in [0,t]$, this implies the occurrence of $\widehat{G}^Y_S$.
The lemma thus follows from Corollary~\ref{cor:goodconditoned}.
\end{proof}

\begin{proof}
[Proof of Lemma~\ref{lemma:probabreak2}]
On the occurrence of $G^0(D_{R_t-2\beta t})$,
the sites in $B_{R_t-3\beta t}$ cannot be infected by sites outside $B_{R_t-2\beta t}$ by time $t$.
Now if $\eta_t\cap B_{R_t}\setminus B_{R_t-3\beta t}=\emptyset$ also occurs,
this implies as well that $X \in B_{R_t-2\beta t}$ and that $Y \in B_{R_t-\beta t}$.

Now using Corollaries~\ref{cor:goodconditoned} and~\ref{cor:boundring} we have for $\beta$ and $t$ large enough
\begin{multline*}
\P \left(Y \not\in B_{R_t-\beta t}\,\big|\,\eta_t\cap B_{R_t}\neq\emptyset\right)
\le
\\
\le
\P \left( \neg G^0(D_{R_t-2\beta t}) \,\big|\, \eta_t\cap B_{R_t}\neq\emptyset \right)
+
\P \left( \eta_t\cap B_{R_t}\setminus B_{R_t-3\beta t}\neq\emptyset ,\big|\,\eta_t\cap B_{R_t}\neq\emptyset\right)
\le
\\
\le
e^{-t} + C_{\beta}\frac{t^{d+1}}{R_t}
,
\end{multline*}
and by adjusting $C_\beta$ we conclude the proof.
\end{proof}

\begin{definition*}
[Favorable time intervals]
Let $\gamma$ be a path in the time interval $[0,t]$ and let $\beta>0$.
We say that a time interval $[s-\sqrt{t},s)\subseteq [0,t]$ is \emph{favorable} for path $\gamma$ if for any $u\in [s-\sqrt{t} ,s)$ the number of jumps of $\gamma$ during $[u,s)$ is at most $4 \beta|s-u|$.
\end{definition*}

\begin{lemma}
\label{lemma:favorableintervals}
Let $\gamma$ be a path in the time interval $[0,t]$ with at most $\beta t$ jumps.
Then there are at least $\frac{\sqrt{t}}{4}-1$ disjoint favorable intervals for $\gamma$ contained in $[0,\frac{t}{2}]$.
\end{lemma}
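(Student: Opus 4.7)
The plan is a greedy backward scan from time $t/2$. Write $N_\gamma(a,b)$ for the number of jumps of $\gamma$ during $[a,b)$. I would define times $s_0 > s_1 > \cdots$ inductively with $s_0 = t/2$: at step $k$, if $s_k < \sqrt{t}$, stop; otherwise consider the candidate interval $I_k = [s_k - \sqrt{t}, s_k)$. If $I_k$ is favorable for $\gamma$, include it in the collection and set $s_{k+1} = s_k - \sqrt{t}$. If not, the negation of favorability yields some $u \in I_k$ with $N_\gamma(u, s_k) > 4\beta(s_k - u)$; choose such a $u$ and set $s_{k+1} = u$. Because $(s_k)$ is strictly decreasing and contained in $[0, t/2]$, the collected intervals are automatically pairwise disjoint and lie inside $[0, t/2]$.

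The counting then proceeds by balancing two resources. Let $k$ denote the number of favorable intervals collected and let $T_u = \sum_{j}(s_j - s_{j+1})$, summed over the unfavorable steps. Since the procedure terminates only once $s_{k+1} < \sqrt{t}$, the total length traversed satisfies $k\sqrt{t} + T_u \geq t/2 - \sqrt{t}$. On the other hand, the unfavorable steps correspond to pairwise disjoint subintervals $[s_{j+1}, s_j) \subseteq [0, t/2]$, each containing strictly more than $4\beta(s_j - s_{j+1})$ jumps of $\gamma$. Summing and using the hypothesis that $\gamma$ has at most $\beta t$ jumps in $[0,t]$ gives $4\beta T_u \leq \beta t$, so $T_u \leq t/4$. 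Substituting, $k\sqrt{t} \geq t/2 - \sqrt{t} - t/4 = t/4 - \sqrt{t}$, hence $k \geq \sqrt{t}/4 - 1$.

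There is no real obstacle here; the argument is just bookkeeping. The factor $4$ in the definition of favorable is calibrated precisely so that the unfavorable segments can consume at most $t/4$ of the available $t/2$, leaving at least $t/4$ for the favorable ones and hence at least $\sqrt{t}/4 - 1$ disjoint intervals of length $\sqrt{t}$. The only subtle point worth flagging in the write-up is that the unfavorable subintervals $[s_{j+1}, s_j)$ and the favorable intervals $I_k$ are all mutually disjoint (by monotonicity of the scan), which is what allows the jumps to be summed against the global budget $\beta t$ without double-counting.
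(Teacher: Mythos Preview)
Your argument is correct. The greedy backward scan is exactly the right idea: each unfavorable step consumes strictly more than $4\beta$ times its length in jumps, the steps are disjoint, and the global jump budget is $\beta t$, so the unfavorable steps cover at most $t/4$ of the available $t/2$; the remaining length must be tiled by favorable intervals of length $\sqrt t$, yielding the bound. One small point you glossed over is termination: since each unfavorable step contains at least one jump and the jumps lie in disjoint intervals, there are at most $\beta t$ unfavorable steps, and there are trivially at most $\sqrt t /2$ favorable steps, so the scan halts in finitely many steps.

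As for comparison with the paper: the paper does not actually prove this lemma but simply cites \cite[Lemma~2.12]{andjelrolla}. Your backward-scan argument is the standard proof of this type of statement and is, up to bookkeeping conventions, what one finds in that reference.
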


For a proof, see \cite[Lemma~2.12]{andjelrolla}.

\begin{lemma}
\label{L7}
If a path $\gamma$ in the time interval $[0,t]$ has at least $k$ disjoint favorable intervals in $[0,\frac{t}{2}]$, then for all $t\ge 1$
\[ \mathbb P \left( S \leqslant \tfrac{t}{2} \,\middle|\, \Gamma=\gamma \right) \geq 1-p_\beta^k,\] with $p_\beta<1$ for all $\beta$.
\end{lemma}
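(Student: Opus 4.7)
Plan: The strategy is to show that each favorable interval provides an independent chance of producing a break point. For each favorable interval $I_i = [s_i - \sqrt t, s_i)$ with $y_i = \gamma(s_i)$, I will construct an event $E_i$, measurable with respect to the Poisson marks in a space-time region contained in $I_i \times B^{y_i}_{3\beta t}$ and disjoint from the marks determining $\{\Gamma = \gamma\}$ inside $I_i$, such that $E_i \cap \{\Gamma = \gamma\}$ forces $(y_i, s_i)$ to be a break point and $\P(E_i \mid \Gamma = \gamma) \ge 1 - p_\beta$ for some $p_\beta < 1$ depending only on $\beta$. Since the intervals $I_i$ are pairwise disjoint, the events $E_i$ can be arranged to be conditionally independent given $\{\Gamma = \gamma\}$, yielding
\[
\P\bigl( S > \tfrac{t}{2} \,\big|\, \Gamma = \gamma \bigr) \le \P\Big( \bigcap_{i=1}^{k} E_i^c \,\Big|\, \Gamma = \gamma \Big) = \prod_{i=1}^{k} \P\bigl( E_i^c \,\big|\, \Gamma = \gamma \bigr) \le p_\beta^{k}.
\]

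For the construction of $E_i$: the favorable condition ensures that during $I_i$ the path $\gamma$ is confined to a tube $T_i \subset B^{y_i}_{4\beta \sqrt t}$ of linear size $O(\sqrt t)$, which is negligible compared to the break-point ball $B^{y_i}_{2\beta t}$. I take $E_i$ to be the conjunction of two sub-events. The first is a \emph{shielding} event: on an outer annulus around $\partial B^{y_i}_{2\beta t + 1}$, during the first half of $I_i$, every boundary site carries a recovery mark and no infection arrow points inward, which isolates $B^{y_i}_{2\beta t}$ from outside infections for the rest of $I_i$. The second is an \emph{extinction} event: the auxiliary contact process inside $B^{y_i}_{2\beta t} \setminus T_i$, started from all-infected at time $s_i - \sqrt t / 2$ and running with the shielded dynamics, dies out by time $s_i$. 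Both sub-events are measurable with respect to the Poisson marks in $I_i \times (B^{y_i}_{3\beta t} \setminus T_i)$, disjoint from the data constraining $\{\Gamma = \gamma\}$ within $I_i$. The shielding has probability tending to $1$ because it involves only polynomially many boundary sites, each recovering within time of order $\sqrt t$ with overwhelming probability; the extinction has probability tending to $1$ by the standard subcritical die-out of the contact process on a box of polynomial volume $(2\beta t)^d$ within time $\sqrt t \gg \log t$.

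The main obstacle is the careful disentanglement of the graphical data defining $E_i$ from that constraining $\{\Gamma = \gamma\}$ inside $I_i$. The path imposes conditions on marks within the tube $T_i$ (infection arrows at the jump times and no recoveries on vertical segments) and, via rules (a)--(d), on the existence or non-existence of open connections from various space-time points to $L_t$. By placing the shield and the extinction event strictly outside $T_i$, the first type of constraint is trivially avoided; the second type concerns connections to $L_t = \Z^d \times \{t\}$ which can be settled using the Poisson data outside $I_i$, and hence is independent of $E_i$ once we restrict $E_i$ to $I_i$. Carrying out this measurability separation, together with adjusting the constants to absorb the conditioning on $\{\Gamma = \gamma\}$ via \eqref{eq:usefulestimates} and Corollary~\ref{cor:goodconditoned}, constitutes the core technical step.
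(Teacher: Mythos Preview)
Your proposal has two genuine gaps.

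\medskip

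\textbf{The independence claim fails.} You assert that the constraints of type (a)--(d) ``concern connections to $L_t$ which can be settled using the Poisson data outside $I_i$''. This is false. The event $\{\Gamma=\gamma\}$ requires that every higher-priority alternative (a site $x\prec x_0$ at time $0$, or a branch-off at a jump of $\gamma$) fails to reach $B_{R_t}\times\{t\}$. Any such alternative path that starts before time $s_i-\sqrt t$ must cross the entire slab $\Z^d\times I_i$, and whether it survives that crossing depends precisely on the recovery marks and arrows in $I_i\times(\Z^d\setminus T_i)$ --- the very region where you place your shielding and extinction events. So $E_i$ and $\{\Gamma=\gamma\}$ are not independent. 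The paper handles this not by independence but by writing $\{\Gamma=\gamma\}=\{\gamma\text{ open}\}\cap H_\gamma^c$ with $H_\gamma$ increasing and supported on $D_\gamma^c$, and then applying the FKG inequality: the break-point-forcing events are decreasing, $H_\gamma^c$ is decreasing, hence conditioning on $H_\gamma^c$ can only help.

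\medskip

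\textbf{The event $E_i$ does not force a break point.} Your extinction event lives in $B^{y_i}_{2\beta t}\setminus T_i$; you explicitly leave the tube $T_i$ untouched so as not to interfere with $\{\gamma\text{ open}\}$. But then infections can branch off $\gamma$ \emph{within} the tube --- say $\gamma$ sends an arrow to a neighbor at time $s_i-1$ and that neighbor carries no recovery mark before $s_i$ --- producing a site in $B^{y_i}_{2\beta t}\setminus\{y_i\}$ that is reachable from $L_0$ at time $s_i$. This violates the break-point condition, and nothing in $E_i$ prevents it. The paper's event $J_{\gamma,s,y}$ explicitly includes the clause ``$\gamma\rightsquigarrow (y,s)+E_t$ in $D_\gamma^c$'' to cover exactly these branch-offs, and then bounds $\P(J_{\gamma,s,y})$ by $\P(F_t\rightsquigarrow E_t)$ using the favorable-interval geometry and exponential decay.

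\medskip

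Both issues are repairable along the paper's lines (FKG for the first, enlarging your event to forbid branch-offs from $\gamma$ for the second), but as written the argument does not go through.
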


\begin{proof}
The main difference with the proof of \cite[Lemma~2.15]{andjelrolla} is in the following step.
Let $D_\gamma$ be the closed set given by the union of the horizontal and vertical segments of $\gamma$ and $D_\gamma^c=(\R^d\times[0,t])\setminus D_\gamma$.
We will write the event $\{\Gamma=\gamma\}$ as
\[
\left\{\Gamma=\gamma\right\}=\{\gamma\text{ is open}\}\cap H_\gamma^c
,
\]
where the event $H_\gamma$ is increasing and depends only on $D_\gamma^c$. These properties depend on the definition of $\Gamma$ in a more subtle way than for $d=1$.

Recall the construction described in \textsection\ref{sec:minimalpath}. Suppose that a certain path $\gamma$ from $(x_0,0)$ to $B_{R_t}\times \{t\}$ is open. If $\gamma$ is the unique open path from $L_0$ to $B_{R_t}\times \{t\}$, then of course $\Gamma$ equals $\gamma$. On the other hand, when $\gamma$ is not the unique open path, for $\Gamma$ to be different from $\gamma$, it is necessary and sufficient that some path of ``higher priority'' exists. This can be a path starting from some $x \prec x_0$ at time $0$, a path starting as a horizontal jump away from $\gamma$ towards a site with higher priority, or a path starting as a continuation of $\gamma$ when the latter jumps towards a site of lower priority, as long as this alternative path either ends in $B_{R_t}\times \{t\}$ or meets with $\gamma$ at a later time. Defining $H_\gamma$ as the occurrence of at least one such open path, we see that, when $\gamma$ is open, $\Gamma=\gamma$ if and only if $H_\gamma$ does not occur.

The remainder of the proof is a straightforward adaptation of \cite[Lemma~2.15]{andjelrolla} to general dimensions, and is included in the sequel for convenience.

Let $E_t=(B_{2\beta t} \setminus\{\mathbf{0}\})\times\{0\}$. When $\Gamma=\gamma$, for any $(y,s)$ in $\gamma$,
\[
J_{\gamma,s,y}^c
\text{ implies that }
(y,s) \text{ is a break point}
,
\]
with
\begin{align*}
J_{\gamma,s,y} = \text{``}L_0\rightsquigarrow (y,s)+E_t\text{ in }D_\gamma^c \text{ or }\gamma\rightsquigarrow (y,s)+E_t\text{ in }D_\gamma^c\text{''}.
\end{align*}
Thus,
\begin{align*}
\mathbb P \left( S \leqslant \tfrac{t}{2} \,\big|\, \Gamma=\gamma \right)
&\ge
\mathbb P\left( J_{\gamma,s,\gamma(s)}^c \text{ for some }s\in[0,\tfrac{t}{2}] \,\big|\, \Gamma=\gamma \right)
\\
&=
\mathbb P\left( J_{\gamma,s,\gamma(s)}^c \text{ for some }s\in[0,\tfrac{t}{2}] \,\big|\, \gamma\text{ open },H_\gamma^c \right)
\\
&=
\mathbb P\left( J_{\gamma,s,\gamma(s)}^c \text{ for some }s\in[0,\tfrac{t}{2}] \,\big|\, H_\gamma^c \right).
\end{align*}
For the last equality, we used the fact that $\{\gamma\text{ is open}\}$ depends on $U\cap D_\gamma$ whereas $H_\gamma^c$ and $J_{\gamma,s,\gamma(s)}$ depend on $U\cap D_\gamma^c$.

Now, since the occurrence of $J_{\gamma,s,\gamma(s)}^c \text{ for some }s\in[0,\tfrac{t}{2}]$ as well as the occurrence of $H_\gamma^c$ are both decreasing, by FKG inequality we have
\[
\mathbb P \left( S\leqslant \tfrac{t}{2} \,\big|\, \Gamma=\gamma \right)
\geq
\mathbb P\left(J_{\gamma,s,\gamma(s)}^c \text{ for some } s\in[0,\tfrac{t}{2}] \right)
.
\]

Let $\sqrt{t} \leqslant t_1<t_2<\cdots<t_k \leqslant \frac{t}{2}$ be such that $t_{j} \geqslant t_{j-1} + \sqrt{t}$ and $[t_j-\sqrt{t},t_j)$ is a favorable interval for $\gamma$.
Let $\boldsymbol{z}_j = (\gamma(t_j),t_j)$ and write
\begin{align*}
F_t&=\left\{(x,-u), \|x\|_\infty = \lfloor 4\beta u\rfloor\text{ and } 0\leq u <\sqrt{t}\right\}\cup \left\{u=-\sqrt{t}\text{ and } \|x\|_\infty\geqslant 4\beta\sqrt{t}\right\}.
\end{align*}
By definition of favorable interval and of the set $F_t$, the path $\gamma$ cannot make enough jumps to leave $F_t$ so we have $D_\gamma \cap \left(\R^d \times [t_j-\sqrt{t},t_j)\right)\subseteq \boldsymbol{z}_j + F_t$.
If $J_{\boldsymbol{z}_j}:=J_{\gamma,t_j,\gamma(t_j)}$ occurs then $\boldsymbol{z}_j +F_t \leadsto \boldsymbol{z}_j+E_t$.
Since these events depend on $U \cap \left(\R^d \times [t_j-\sqrt{t},t_j)\right)$,
which are disjoint as $j$ goes from 1 to $k$, we have that
\[
\P\left( S \leqslant \tfrac{t}{2} \,\big|\, \Gamma=\gamma \right)
\geqslant
1-\P\left( \boldsymbol{z}_j + F_t \rightsquigarrow \boldsymbol{z}_j + E_t \text{ for all } j \right)
=
1 - \P\left( F_t \rightsquigarrow E_t \right)^k.
\]
This proves the lemma with
\[p_\beta=\sup_{t\ge1} \P(F_t\leadsto E_t),\]
which is less than one as a consequence of exponential decay \cite[Lemma~2.13]{andjelrolla}.
\end{proof}

\begin{proof}
[Proof of Lemma~\ref{lemma:probabreak1}]
The assumption $R_t = e^{O(t)}$ could be waived, but it makes the proof more transparent.
On the event $G^0(B_{R_t+\beta t})$, no open path can start outside $B_{R_t+\beta t}$
and reach $B_{R_t}$ by time $t$, so $X$ must be in this box.
This implies in particular that $X$ itself is good, and by Lemma~\ref{lemma:favorableintervals} the path $\Gamma$ will have at least $\frac{\sqrt{t}}{4}-1$ disjoint favorable time intervals on $[0,\frac{t}{2}]$.

When $\beta$ and $t$ are large enough, Corollary~\ref{cor:goodconditoned} gives
\[
\P \left( G^0(B_{R_t+\beta t}) \,\middle|\big.\, \eta_t\cap B_{R_t}\neq\emptyset \right) \ge 1 - e^{-t}
.
\]
Using Lemma~\ref{L7} we get
\[
\P \left( S\leq \tfrac{t}{2}\,\big|\,\eta_t\cap B_{R_t}\neq\emptyset \right)
\geq
\P \left( S\leq \tfrac{t}{2}, G^0(B_{R_t+\beta t}) \,\big|\,\eta_t\cap B_{R_t}\neq\emptyset \right)
\geq
1-e^{-t}-p_{\beta}^{\frac{\sqrt{t}}{4}-1}
.
\]
We obtain the result by taking $0<c_{\beta}<\frac{\ln(1/p_{\beta})}{4}$.
\end{proof}

\subsection{Expectation of the number of infected sites}
\label{sec:expectation}

We finally prove Proposition~\ref{prop:convexpectyaglom}.
The notation $\E_\nu|\zeta|$ in its statement is a short for $\sum_{\zeta \in \Lambda} |\zeta| \, \nu(\zeta)$, where $\nu$ is the limiting distribution in~(\ref{eq:yaglom}). We first state two crucial facts about this Yaglom limit and then proceed to the proofs.

\begin{lemma}
\label{lemma:finiteexp}
$\E_\nu|\zeta| < \infty$.
\end{lemma}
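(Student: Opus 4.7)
The plan is simply to invoke the characterization due to Sturm and Swart~\cite{sturmswart} already recalled just before the statement of Proposition~\ref{prop:convexpectyaglom}: the Yaglom limit~(\ref{eq:yaglom}), which is precisely the $\nu$ produced by Proposition~\ref{prop:existencenu}, is the unique QSD on $\Lambda$ satisfying $\E_\mu|\zeta|<\infty$. Hence $\E_\nu|\zeta|<\infty$ is immediate from that characterization, and no further work is required.

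Should one prefer a self-contained derivation, the natural route is Fatou's lemma. Applying the total-variation convergence in Proposition~\ref{prop:existencenu} to the truncations $f_N(\zeta)=|\zeta|\wedge N$ and letting $N\to\infty$ reduces the claim to the uniform bound
\[
\limsup_{t\to\infty}\frac{\E|\eta_t^{\{0\}}|}{\P(\eta_t^{\{0\}}\ne\emptyset)}<\infty.
\]
The denominator is of order $e^{-\alpha t}$ by~(\ref{eq:usefulestimates}). For the numerator one writes, via duality, $\E|\eta_t^{\{0\}}|=\sum_{x\in\Z^d}\P(0\in\eta_t^{\{x\}})$, and combines the uniform bound $\P(0\in\eta_t^{\{x\}})\le e^{-\alpha t}$ coming from~(\ref{eq:usefulestimates}) with classical spatial exponential decay for the subcritical contact process (see~\cite{bezgrimexpo}) to handle the tail $\|x\|\gg t$.

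The main obstacle in this route is that a naive split of the sum at a scale of order $t$ picks up a spurious polynomial factor $|B_{Kt}|$, so matching the exponential rates of $\E|\eta_t^{\{0\}}|$ and $\P(\eta_t^{\{0\}}\ne\emptyset)$ up to a bounded constant requires the finer $\alpha$-recurrence information for the restricted kernel recalled around~(\ref{eq:hprefactor}). Since this is precisely the information used in~\cite{sturmswart} to establish the moment bound, the cleanest proof is simply to quote that paper.
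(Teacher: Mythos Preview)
Your main approach---quote \cite{sturmswart}---is exactly what the paper does. The only difference is that the paper spells out the dictionary: Sturm--Swart work with translation-invariant, locally-finite eigenmeasures $\mu$ on the space $\mathcal{Z}$ of finite non-empty subsets of $\Z^d$, not directly with QSDs on $\Lambda$, and the paper's proof explains that normalizing $\mu$ to a probability $\tilde\mu$ on $\Lambda$ turns local finiteness $\sum_{A}\1_{\o\in A}\mu(A)<\infty$ into $\E_{\tilde\mu}|\zeta|<\infty$, while the eigenmeasure property becomes quasi-stationarity; the identification $\tilde\mu=\nu$ then comes from the Yaglom-limit theorem in \cite{sturmswart}. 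Your one-line citation is correct in spirit, but since the sentence before Proposition~\ref{prop:convexpectyaglom} is itself a summary of what Lemma~\ref{lemma:finiteexp} proves, pointing back to it is slightly circular---it is worth including the translation step.

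Your alternative Fatou route is correctly diagnosed: the crude split gives $\E|\eta_t^{\o}|\lesssim t^d e^{-\alpha t}$, which is off by a polynomial factor from $\P(\eta_t^{\o}\ne\emptyset)\sim h(\{\o\})e^{-\alpha t}$, and closing that gap really does require the $\alpha$-recurrence machinery already packaged in \cite{swart,sturmswart}.
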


\begin{lemma}
\label{lemma:uniform}
The family of conditional distributions $\law\left( | \zeta_{r}^{\o} |\,\middle|\,\big. \zeta_{r}^\o \ne \emptyset \right)$ on $\N$ indexed by $r\in[0,\infty)$ is uniformly integrable, and
$
\E\left( | \zeta_{r}^{\o} |\,\middle|\,\big. \zeta_{r}^\o \ne \emptyset \right)
{\rightarrow \E_\nu |\zeta|}
\text{ as }
r\to\infty
.
$
\end{lemma}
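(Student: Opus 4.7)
The plan is to establish uniform integrability of the family $\{\law(|\zeta_r^\o|\,|\,\zeta_r^\o\ne\emptyset)\}_{r\ge 0}$ and combine it with the Yaglom weak convergence of Proposition~\ref{prop:existencenu} (which, on the countable space $\Lambda$, upgrades to total variation convergence by Scheff\'e's lemma) via Vitali's theorem; Lemma~\ref{lemma:finiteexp} ensures the limit $\E_\nu|\zeta|$ is finite.

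For uniform integrability, write
\[
\E\bigl(|\zeta_r^\o|\,\1\{|\zeta_r^\o|>K\}\,\big|\,\zeta_r^\o\ne\emptyset\bigr) \;=\; \frac{\E\bigl(|\eta_r^\o|\,\1\{|\eta_r^\o|>K\}\bigr)}{\P(\eta_r^\o\ne\emptyset)},
\]
and note that the denominator is $\sim h(\{\o\})e^{-\alpha r}$ by~(\ref{eq:usefulestimates}). The goal thus becomes $\sup_r e^{\alpha r}\E(|\eta_r^\o|\,\1\{|\eta_r^\o|>K\})\to 0$ as $K\to\infty$. This follows from a uniform second-moment bound $\sup_r e^{\alpha r}\E|\eta_r^\o|^2<\infty$, after which Markov's inequality yields $e^{\alpha r}\E(|\eta_r^\o|\,\1\{|\eta_r^\o|>K\})\leq C/K$.

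The second-moment bound is produced via a tree/BK-type decomposition of the two-point function $\P(x,y\in\eta_r^\o)$: conditioning on the last common space-time vertex $(z,s)$ of the open $\lambda$-paths from $(\o,0)$ to $(x,r)$ and $(y,r)$, and combining with the FKG machinery of \textsection\ref{sec:fkg}, one obtains a convolution inequality of the form
\[
\E|\eta_r^\o|^2 \;\leq\; \E|\eta_r^\o| + C\int_0^r \E|\eta_s^\o|\bigl(\E|\eta_{r-s}^\o|\bigr)^2\,\dd s.
\]
Once fed the sharp first-moment estimate $\E|\eta_t^\o|\leq Ce^{-\alpha t}$, a direct integration yields $e^{\alpha r}\E|\eta_r^\o|^2\leq C$ for all $r\ge 0$, completing the UI step.

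The main obstacle is the sharp first-moment bound $\E|\eta_r^\o|\leq Ce^{-\alpha r}$: naive spatial summation of $\P(x\in\eta_r^\o)\leq e^{-\alpha r}$ over the ball $B_{vr}$ (to which $\eta_r^\o$ is confined up to super-exponentially small error by finite-speed propagation) produces a spurious polynomial factor $r^d$. Sharpness is recovered through the $\alpha$-recurrence behind~(\ref{eq:hprefactor}), which delivers the pointwise convergence $e^{\alpha r}\P(\zeta_r^\o=A)\to h(\{\o\})\nu(A)$ for every $A\in\Lambda$; combined with a dominated-convergence argument whose envelope comes from the monotonicity of the conditioned-to-survive ($h$-transformed) process, this yields $e^{\alpha r}\E|\eta_r^\o|\to h(\{\o\})\E_\nu|\zeta|$, which in particular is uniformly bounded.
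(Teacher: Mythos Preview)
Your argument has a genuine circularity at the step you yourself flag as the ``main obstacle.'' The sharp first-moment bound $e^{\alpha r}\E|\eta_r^\o|\leq C$ is \emph{equivalent} to boundedness of $\E(|\zeta_r^\o|\mid \zeta_r^\o\ne\emptyset)$, since $\E|\eta_r^\o|=\P(\eta_r^\o\ne\emptyset)\cdot\E(|\zeta_r^\o|\mid\zeta_r^\o\ne\emptyset)$ and $e^{\alpha r}\P(\eta_r^\o\ne\emptyset)\to h(\{\o\})$. So you are assuming (a version of) the conclusion. The proposed justification, ``dominated convergence with envelope from monotonicity of the $h$-transformed process,'' does not supply a concrete dominating function: to pass from pointwise convergence $e^{\alpha r}\P(\zeta_r^\o=A)\to h(\{\o\})\nu(A)$ to convergence of $\sum_A |A|\, e^{\alpha r}\P(\zeta_r^\o=A)$ you need a summable-against-$|A|$ majorant uniform in $r$, and neither attractiveness nor stochastic monotonicity of the $h$-process produces one (the integrand $|A|/h(A)$ is not monotone in any useful sense). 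A secondary issue is that the tree/BK convolution inequality you invoke for the second moment is not a consequence of the FKG inequality in \textsection\ref{sec:fkg}; it requires a disjoint-occurrence (van den Berg--Kesten/Reimer) inequality for the graphical construction, which the paper does not provide.

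The paper avoids all of this by working directly at the level of the sub-Markovian semigroup $P^t(A,B)=\P(\zeta_t^A=B)$ on $\Lambda$. From the $\alpha$-positivity established in~\cite{andjelrolla} and the finiteness $\E_\nu|\zeta|<\infty$ of Lemma~\ref{lemma:finiteexp}, it applies \cite[Theorem~2]{nummelin} with weight $g(\zeta)=|\zeta|$ to obtain $\|e^{\alpha t}P^t(\{\o\},\cdot)-h(\{\o\})\nu\|_g\to 0$, i.e.\ convergence in $|\cdot|$-weighted total variation. Dividing by $e^{\alpha t}P^t(\{\o\},\Lambda)\to h(\{\o\})$ gives $\|\law(\zeta_t^\o\mid\zeta_t^\o\ne\emptyset)-\nu\|_g\to 0$, which delivers uniform integrability and convergence of expectations simultaneously, with no moment computations needed.
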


\begin{proof}
[Proof of Proposition~\ref{prop:convexpectyaglom}]
Consider the complementary events
\begin{align*}
F_{1a} & = \widehat{G}_Y^S, S\leq t/2,Y\in B_{R_t-\beta t}
,
\\
F_{1b} & = \widehat{G}_Y^S, S\leq t/2,Y \not\in B_{R_t-\beta t}
,
\\
F_{2} & =
S > t/2
\text{ or } 
\neg \widehat{G}_Y^S
,
\end{align*}
ordered from most to least likely, and let $F_1 = F_{1a} \cup F_{1b}$.
Writing $\tilde{\P}$ and $\tilde{\E}$ to represent the conditioning on $\eta_t\cap B_{R_t}\neq \emptyset$, we start by decomposing $\tilde{\E}\left(|\eta_t\cap B_{R_t}| \right)$ as
\begin{equation*}
\tilde{\E}\left(|\eta_t\cap B_{R_t}| \big. \right)
=
\tilde{\E}\left( |\eta_t\cap B_{R_t}| \cdot \1_{F_1} \big. \right)
+
\tilde{\E}\left( |\eta_t\cap B_{R_t}| \cdot \1_{F_2} \big. \right)
.
\end{equation*}
The proof consists in showing that the first expectation converges to $\E_\nu |\zeta|$ and that $\tilde{\P}(F_2) \to 0$ fast enough.
Indeed, using Lemmas~\ref{lemma:breakpointgood} and~\ref{lemma:probabreak1},
\[
\tilde{\E}\left( |\eta_t\cap B_{R_t}| \cdot \1_{F_2} \big. \right)
\leq
|B_{R_t}| \cdot \tilde{\P} (F_2)
\leq
|B_{R_t}| \cdot \left( e^{-c t} + e^{-\sqrt{t}} \right)
\to 0
.
\]
On the other hand,
\[
|\eta_t\cap B_{R_t}| \cdot \1_{F_1} = 
|\eta_t\cap B_{\beta t}^Y | \cdot \1_{F_{1a}} + 
|\eta_t\cap B_{R_t} \cap B_{\beta t}^Y | \cdot \1_{F_{1b}} + 
|\eta_t\cap B_{R_t} \setminus B_{\beta t}^Y | \cdot \1_{F_{1}}
.
\]
By Lemmas~\ref{lemma:steptwo} and~\ref{lemma:probabreak2},
\begin{gather*}
\tilde{\E}\left( |\eta_t\cap B_{R_t} \setminus B_{\beta t}^Y | \cdot \1_{F_{1}} \right)
\leq
{\E}\left( |\eta_t\cap B_{R_t} \setminus B_{\beta t}^Y| \,\middle|\, {F_{1}} \right)
\leq
|B_{R_t}| e^{-\alpha t/2}
\to 0
,
\\
\tilde{\E}\left( |\eta_t\cap B_{R_t} \cap B_{\beta t}^Y | \cdot \1_{F_{1b}} \right)
\leq
|B_{\beta t}| \, \tilde{\P}( F_{1b} )
\leq
C_\beta t^d \frac{\, t^{d+1} \,}{R_t}
\to 0
.
\end{gather*}
Since $\tilde{\P}(F_{1a})\to 1$ as $t\to\infty$, it suffices to show that
\begin{gather*}
{\E}\left( |\eta_t \cap B_{\beta t}^Y | \,\middle|\, {F_{1a}} \right)
\to
\E_\nu|\zeta|
.
\end{gather*}
Using Lemma~\ref{lemma:stepone} and translation invariance
\begin{align*}
{\E}\left( |\eta_t \cap B_{\beta t}^Y | \,\middle|\, {F_{1a}} \right)
&=
\int_{y,s} \E\left(|\eta_t\cap B_{\beta t}^y|\,\big|\,\widehat{G}_y^s,Y=y, S=s \right) \dd\P(Y=y,S=s | F_{1a})
\\
&=
\int_{[0,\frac{t}{2}]} \E\left(|\eta_{t-s}^{\o}|\,\big|\, \eta_{t-s}^{\o}\ne\emptyset, G_{\o}^0 \right) \dd\P(S=s | F_{1a})
.
\end{align*}
On the other hand, Corollary~\ref{cor:goodconditoned} gives $\P(G^0_\o \,|\, \eta^\o_{r}\ne\emptyset )\to 1$ uniformly over $r\in[0,{t}]$.
Combining this with Lemma~\ref{lemma:uniform}, we get
\begin{align*}
\E\left(|\eta_{r}^{\o}|\,\big|\, \eta_{r}^{\o}\ne\emptyset, G_{\o}^0 \right)
\to
\E_\nu |\zeta|
\quad
\text{ as }
\quad
t\to\infty
,
\end{align*}
uniformly over $r\in[\frac{t}{2},t]$,
concluding the proof.
\end{proof}

\begin{proof}
[Proof of Lemma~\ref{lemma:finiteexp}]
We discuss how results of~\cite{swart,sturmswart}, translated to our setting, say that~$\nu$ is the unique QSD satisfying
\(
\E_\nu |\zeta|<\infty.
\)
Recall that $\Lambda$ is the quotient space resulting from translation equivalence and $(\zeta_t)$ is the contact process modulo translation taking values in $\Lambda\cup\{\emptyset\}$.
Let $\mathcal{Z}=\{A\subset \Z^d:A\text{ is non empty and finite}\}$.
Consider the kernel $P_t$ acting on (possibly infinite) measures $\mu$ on $\mathcal{Z}$ by
\[
(\mu P_t) ( \D ) = \sum_{A\in\mathcal{Z}} \mu(A) \, \P (\eta^A_t \in \D)
,
\quad
\D\subset \mathcal{Z}
.
\]
Proposition~1.4 in~\cite{swart} states that there exists a unique (modulo multiples) translation-invariant locally-finite eigenmeasure $\mu$. That is, there is a unique $\mu$ satisfying:

(i) $\mu(A+x)=\mu(A)$
for all $A$ and $x$;
(ii) $\sum_{\mathcal{Z}} \1_{\o \in A} \, \mu(A) < \infty$;
(iii) $\mu P_t=e^{rt}\mu$
for $t>0$.

We can thus consider the probability measure $\tilde{\mu}$ on $\Lambda$ defined by $\tilde{\mu}(\langle A \rangle)=c\mu(A)$ for each $A\in\mathcal{Z}$. By (i), $\tilde{\mu}$ is well defined. By (iii), the measure $\tilde{\mu}$ satisfies $\tilde{\mu} P_t=e^{-\alpha t}\tilde{\mu}$, so it is a quasi-stationary distribution for the contact process modulo translations.
By (ii),
\begin{equation*}
\E_{\tilde{\mu}} |\zeta| = \sum_{\zeta \in \Lambda} |\zeta| \cdot {\tilde{\mu}}(\zeta)
=
\sum_{\zeta \in \Lambda} \sum_{A\in\mathcal{Z}}
\1_{\langle A \rangle = \zeta}
\,
\1_{\o \in A}
\cdot
{\tilde{\mu}}(\zeta)
=
c
\sum_{A\in\mathcal{Z}}
\1_{\o \in A}
\cdot
\mu(A)
<
\infty
\end{equation*}
Finally, \cite[Theorem~2.12]{sturmswart} states that $\tilde{\mu}$ satisfies the Yaglom limit, so $\tilde{\mu}=\nu$.
\end{proof}

\begin{proof}
[Proof of Lemma~\ref{lemma:uniform}]
Let us identify measures on $\Lambda$ with row vectors, and real functions on $\Lambda$ with column vectors, both indexed by $\Lambda$. Consider the sub-Markovian semigroup of matrices $(P^t)_{t \ge 0}$ defined by $P^t(A,B)=\P(\zeta^A_t=B)$ for $A,B\in\Lambda$.

It follows from \cite[Section~3]{andjelrolla} that $P^t$ is $\alpha$-positive and there exist a probability $\nu$ and positive function $h$ such that
\(
e^{\alpha t}P^t \to h \nu
\text{ and }
e^{\alpha t}P^t \boldsymbol{1} \to h
\)
as $t\to\infty$.

In the notation of~\cite{nummelin}, for a nonnegative function $g$ and a signed measure $\mu$, define $\|\mu\|_g = \max\{\mu^+ g , \mu^- g\}$.
Writing $\pi=\nu$ and
taking
\(
g(\zeta)=|\zeta|,
\)
from Lemma~\ref{lemma:finiteexp} we have $\pi g < \infty$, so we can apply \cite[Theorem~2]{nummelin}, obtaining
\[
\left\| e^{\alpha t} P^t(A,\cdot) - h(A) \pi(\cdot) \right\|_g \to 0
.
\]
Now, as a consequence of $\alpha$-positivity with summable $\nu$,
the denominator in
\[
\P \left( \zeta_t^A = \,\cdot\,\, \middle|\, \tau>t \right)
=
\frac
{e^{\alpha t}P^t(A,\,\cdot\,)}
{e^{\alpha t}P^t(A,\Lambda)}
,
\]
converges to $h(A)$, see \cite[Theorem~3.1]{andjelrolla}.
Therefore
\[
\left\| \P ( \zeta_t^A = \,\cdot\,\, |\, \tau>t ) - \nu \right\|_g \to 0
,
\]
which implies uniform integrability as stated.
\end{proof}

\section{Proof of the scaling limit}
\label{sec:scaling}

In this section we prove Theorem~\ref{thm:convergence}. The proof relies on Propositions~\ref{prop:cvtonu} and~\ref{prop:convexpectyaglom}, but is otherwise independent of the previous section.
In the sequel we state and discuss three independent lemmas, and use them to prove the theorem.
Proving the lemmas is postponed to the end of the section.

We start with the fact that connected components in $\mathcal{C}_t$ can indeed be represented by some $(x,\zeta) \in \Z^d \times \Lambda$.
\begin{lemma}
\label{lemma:nonperco}
If $R_t\ll e^{\alpha t/d}$, for large $t$ there are a.s.\ no infinite components in $\mathcal{C}_t$.
\end{lemma}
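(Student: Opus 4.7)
The plan is to coarse-grain $\eta_t$ at a scale $\sim R_t$ and reduce the question to subcritical site percolation on $\Z^d$ with $\|\cdot\|_\infty$-adjacency. Partition $\Z^d$ into disjoint boxes $\Delta_k := L_t k + \{0,\ldots,L_t-1\}^d$ of side $L_t := R_t \vee \beta t$, indexed by $k\in\Z^d$, and declare $k$ \emph{occupied} if $\Delta_k\cap\eta_t\ne\emptyset$. Since every edge defining $\C_t$ has $\|\cdot\|_\infty$-length strictly below $R_t\le L_t$, an edge can only connect sites in $\|\cdot\|_\infty$-adjacent boxes, so an infinite component of $\C_t$ projects to an infinite $\|\cdot\|_\infty$-connected set of occupied boxes. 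The marginal $\P(k\ \text{occupied})\le L_t^d e^{-\alpha t}$ by~(\ref{eq:usefulestimates}) tends to $0$ under the hypothesis $R_t\ll e^{\alpha t/d}$ (note $\beta t \ll e^{\alpha t/d}$ as well), placing us in the deeply subcritical Boolean-model regime.

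The main technical point is that the occupancy field has no \emph{a priori} finite-range dependence, because arrows in the Harris construction can in principle propagate arbitrarily far. The plan to overcome this is the good-point technology. By time-reversal symmetry of the Harris construction and Corollary~\ref{cor:goodwithout}, each $(y,t)$ is ``dually good'' with probability $\ge 1-e^{-\rho t}$ --- i.e., every backward $\lambda$-path from $(y,t)$ stays in $B^y_{\beta t}$ during $[0,t]$ --- and this event, as well as $\{y\in\eta_t\}$ on it, is determined by the Harris marks in $B^y_{\beta t}\times[0,t]$. Replace the occupancy indicator by the dominating field
\[
\tilde Y_k := \1\{\Delta_k \cap \eta_t \ne \emptyset\} \vee \1\{\exists\, y \in \Delta_k + B_{\beta t} \text{ that is not dually good}\},
\]
which dominates the occupancy indicator, still has marginal $\tilde q \to 0$, and is determined by the Harris marks in $(\Delta_k + B_{2\beta t})\times[0,t]$. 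Since $L_t\ge\beta t$, these measurability regions are disjoint for $\|k-k'\|_\infty$ above some universal constant, so $\{\tilde Y_k\}$ is $m$-dependent in the $\|\cdot\|_\infty$-adjacency for some $m = O(1)$.

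The conclusion follows from a standard Peierls path count for $m$-dependent subcritical site percolation. Fix a box $k^\star$: the number of $\|\cdot\|_\infty$-self-avoiding walks of length $n$ from $k^\star$ is at most $(3^d)^n$, and a greedy argument extracts from any such SAW an independent set of at least $n/(2m+1)^d$ vertices at pairwise box-distance $>m$, on which the values of $\tilde Y$ are independent by the previous step. Hence $\P(\text{SAW all occupied})\le \tilde q^{\,n/(2m+1)^d}$, and summing,
\[
\P(k^\star\in\text{infinite }\tilde Y\text{-cluster}) \le \sum_{j\ge n}\bigl(3^d \tilde q^{\,1/(2m+1)^d}\bigr)^j \longrightarrow 0
\quad\text{as}\quad n\to\infty,
\]
once $\tilde q<3^{-d(2m+1)^d}$, which holds for all $t$ large. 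Countable additivity over $k^\star\in\Z^d$ then yields the lemma. The main obstacle is precisely the passage from the pointwise good-point estimate to genuinely finite-range dependence, achieved by absorbing the ``non-good'' sites into the larger indicator $\tilde Y_k$; once that is in place the rest is classical subcritical percolation, since $L_t^d e^{-\alpha t}$ sits arbitrarily far below any percolation threshold for $t$ large.
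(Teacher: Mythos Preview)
Your proof is correct and follows essentially the same strategy as the paper: coarse-grain at scale $\sim R_t$, use the good-point technology to make the occupancy field finitely dependent, and conclude via subcritical $m$-dependent site percolation. The one noteworthy difference is \emph{where} the good-point condition is placed. The paper declares a box bad if $\eta_t$ meets it \emph{or} some site on a sphere $D^x$ of radius $R_t+\beta t$ around it fails to be (forward) good at time~$0$; the key observation there is that $G^0_z$ actually controls $\lambda$-paths from $(z,s)$ for every $s\in[0,t]$ (extend vertically back to time~$0$ at no cost in jumps), so paths entering from outside are blocked at $D^x$. You instead invoke self-duality and require sites in $\Delta_k+B_{\beta t}$ to be \emph{dually} good at time~$t$, which directly localizes the event $\{y\in\eta_t\}$ to $B^y_{\beta t}\times[0,t]$. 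Both devices yield a field measurable with respect to an $O(\beta t)$-enlargement of the box and hence $O(1)$-dependent once $L_t\ge\beta t$; your route is arguably more transparent because it avoids the sphere-crossing argument, at the price of appealing to time-reversal. The explicit Peierls count you give is exactly the ``standard result'' the paper invokes when it says a 2-dependent field of low density has no infinite cluster.
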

This lemma uses the definition of good points and the crude estimate
$\P(\eta_t \cap B_{R_t}\neq \emptyset) \ll 1$.
Using Proposition~\ref{prop:convexpectyaglom}, we obtain the correct asymptotics, given by the following lemma.
\begin{lemma}
\label{lemma:prefactor}
If $t^{2d+1}\ll R_t\ll e^{\sqrt[3]{t}}$, then
for the constant $\rho$ given by~(\ref{eq:rho}) we have
\[
{\P\left(\big. \eta_t\cap B_{R_t} \neq \emptyset\right)}
\sim
\rho
\,
e^{-\alpha t}
\,
{|B_{R_t}|}
.
\]
\end{lemma}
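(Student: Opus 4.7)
The plan is to deduce the asymptotics of $\P(\eta_t \cap B_{R_t} \ne \emptyset)$ from the basic identity
\[
\E\bigl(|\eta_t \cap B_{R_t}|\bigr)
=
\E\bigl(|\eta_t \cap B_{R_t}| \,\big|\, \eta_t \cap B_{R_t}\neq\emptyset\bigr) \cdot \P\bigl(\eta_t \cap B_{R_t}\neq\emptyset\bigr),
\]
since each of the remaining two factors has a known asymptotic behaviour under our hypotheses on $R_t$.

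First, I will compute the unconditional expectation by translation invariance:
\[
\E\bigl(|\eta_t \cap B_{R_t}|\bigr)
=
\sum_{x \in B_{R_t}} \P(x \in \eta_t)
=
|B_{R_t}| \cdot \P(\o \in \eta_t).
\]
The estimate~(\ref{eq:usefulestimates}) then gives $\P(\o \in \eta_t) \sim h(\{\o\}) e^{-\alpha t}$, so
\[
\E\bigl(|\eta_t \cap B_{R_t}|\bigr) \sim h(\{\o\}) \, e^{-\alpha t} \, |B_{R_t}|.
\]

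Next, under the assumption $t^{2d+1}\ll R_t\ll e^{\sqrt[3]{t}}$, Proposition~\ref{prop:convexpectyaglom} applies and yields
\[
\E\bigl(|\eta_t \cap B_{R_t}| \,\big|\, \eta_t \cap B_{R_t}\neq\emptyset\bigr) \to \E_\nu |\zeta|,
\]
where the limit is finite by Lemma~\ref{lemma:finiteexp} and strictly positive (each $\zeta\in\Lambda$ has $|\zeta|\ge 1$). Substituting these two asymptotics into the identity above and solving for $\P(\eta_t\cap B_{R_t}\neq\emptyset)$ gives
\[
\P\bigl(\eta_t\cap B_{R_t}\neq\emptyset\bigr)
\sim
\frac{h(\{\o\})}{\E_\nu|\zeta|} \, e^{-\alpha t} \, |B_{R_t}|
=
\rho \, e^{-\alpha t} \, |B_{R_t}|,
\]
with $\rho$ as defined in~(\ref{eq:rho}). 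There is essentially no obstacle: the two nontrivial ingredients (Proposition~\ref{prop:convexpectyaglom} and the sharp survival asymptotic~(\ref{eq:hprefactor})) have been established already, and the rest is the standard identity relating mean, conditional mean, and survival probability together with translation invariance.
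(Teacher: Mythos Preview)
Your proof is correct and follows essentially the same approach as the paper: both use the identity $\E(|\eta_t\cap B_{R_t}|)=\E(|\eta_t\cap B_{R_t}|\mid\eta_t\cap B_{R_t}\ne\emptyset)\cdot\P(\eta_t\cap B_{R_t}\ne\emptyset)$, compute the unconditional expectation as $|B_{R_t}|\,\P(\o\in\eta_t)$ via translation invariance, and then apply Proposition~\ref{prop:convexpectyaglom} and the asymptotic~(\ref{eq:usefulestimates}) to conclude.
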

So the density of infected boxes agrees with the density of the scaling limit stated in Theorem~\ref{thm:convergence}. Moreover, knowing which boxes are infected we have the correct distribution inside each of them, thanks to Proposition~\ref{prop:cvtonu}.

The missing ingredient is independence between boxes.
In the sequel we give a precise meaning to the idea that the configuration $\eta_t$ is almost independent across different boxes.
Namely, we will show that, for large $t$, the configuration $\eta_t$ can be approximated by a collection of independent patches within mesoscopic boxes.

From now on we let
\[
K>0
\quad
\text{ and }
\quad
\Psi=[-K,K]^d \subset \R^d
\]
be fixed, and assume that $1 \ll R_t \ll e^{\alpha t/d}$.
Let
$$\mathcal{R}_t=e^{-\frac{\alpha t}{d}} \, \Z^d$$
be the lattice (gray grid in Figure~\ref{micro}) where the rescaled configuration
\[
{\oeta}_t = e^{-\frac{\alpha t}{d}} \eta_t \subset {\mathcal{R}_t}
\]
lives (pink points in Figure~\ref{micro}).
The space $\R^d$ will be partitioned into mesoscopic boxes.
More precisely, consider the intermediate lattice
\[
\mathcal{R}'_t = ( 2R_t + 1 ) \mathcal{R}_t
\]
associated to the centers of such boxes (blue points in Figure~\ref{micro}).
Label the points in $\mathcal{R}'_t$ as $(y_t^{(j)},j\geq1)$.
Finally, the box associated to $y_t^{(j)}$ is denoted by
\begin{align*}
B_{R_t}^{(j)}&=\mathcal{B} \big( y_t^{(j)}, R_t e^{-\frac{\alpha t}{d}} \big)
\cap
\mathcal{R}_t
\end{align*}
which corresponds to a discrete ball of radius $R_t$ in $\mathcal{R}_t$. There are
\begin{equation}
\label{eq:boxcount}
n_t(\Psi) \sim \frac{K^d}{\, e^{-\alpha t} R_t^d\, }
\end{equation}
such boxes intersecting $\Psi$, and we assume they are labeled $B_{R_t}^{(1)},\dots,B_{R_t}^{(n_t)}$.

\begin{lemma}
\label{lemma:indepbetweenboxes}
If $R_t \gg t$, then, as $t \to \infty$,
\[\law\left(\oeta_t \cap B_{R_t}^{(j)}\cap \Psi,j\geq 1\right)\approx\bigotimes_{j\ge 1} \law \left(\oeta_t \cap B_{R_t}^{(j)}\cap \Psi \right),\]
where the right hand side is composed by independent copies of the contact process.\end{lemma}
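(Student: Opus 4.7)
My approach is to couple the joint configuration with an independent family of localized contact processes. For each $j$, let $\tilde\eta_t^{(j)}$ denote the contact process run on the induced sub-graph $B_{R_t}^{(j)}$, started from every site of $B_{R_t}^{(j)}$ at time $0$, using only the restriction of the Harris graphical construction $U$ to $B_{R_t}^{(j)} \times [0,t]$---that is, recovery marks at sites of $B_{R_t}^{(j)}$ and infection arrows between pairs of neighbors both lying in $B_{R_t}^{(j)}$. These portions of $U$ are disjoint across $j$, so $(\tilde\eta_t^{(j)})_{j\ge 1}$ is automatically independent; moreover, every open path in the restricted data is also open in $U$, hence $\tilde\eta_t^{(j)} \subseteq \eta_t \cap B_{R_t}^{(j)}$.

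The core task is to upgrade this inclusion to equality on a high-probability event. I shrink each box to $\tilde B^{(j)} := B_{R_t - 3\beta t}^{(j)}$ and introduce
\[
G_j := G^{[0,t]}(D_{R_t}^{(j)})
,
\qquad
E_j := \{\eta_t \cap (B_{R_t}^{(j)} \setminus \tilde B^{(j)}) = \emptyset\}
.
\]
On $G_j$, every $\lambda$-path from $(z,s)$ with $z \in D_{R_t}^{(j)}$ and $s \in [0,t]$ makes fewer than $\beta t$ jumps during $[s,s+t]$, hence cannot bridge the $3\beta t$ gap separating $D_{R_t}^{(j)}$ from $\tilde B^{(j)}$. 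Consequently any open path ending in $\tilde B^{(j)} \times \{t\}$ must stay inside $B_{R_t}^{(j)} \times [0,t]$ (for any boundary-crossing $(z,s)$, the remaining path could not reach $\tilde B^{(j)}$ by time $t$), giving $\eta_t \cap \tilde B^{(j)} = \tilde\eta_t^{(j)} \cap \tilde B^{(j)}$. Combining this with $E_j$, which forces both $\eta_t$ and $\tilde\eta_t^{(j)} \subseteq \eta_t$ to be empty on the ring, yields $\eta_t \cap B_{R_t}^{(j)} = \tilde\eta_t^{(j)}$ on $G_j \cap E_j$.

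The probability estimates are routine. By the $(\beta,2t)$-good trick from the proof of Corollary~\ref{cor:goodconditoned}, for $\beta$ large enough one has $\P((G^{[0,t]}(\{x\}))^c) \le e^{-\rho t}$ for any prescribed $\rho$; choosing $\rho > \alpha$ and union-bounding over $D_{R_t}^{(j)}$ gives $\P(G_j^c) \lesssim R_t^{d-1} e^{-\rho t}$, and summing over the $n_t(\Psi) \lesssim K^d e^{\alpha t}/R_t^d$ boxes intersecting $\Psi$ yields $O(e^{(\alpha-\rho)t}/R_t) \to 0$. The estimate~(\ref{eq:usefulestimates}) gives $\P(E_j^c) \lesssim R_t^{d-1} t\, e^{-\alpha t}$, whose sum is $O(t/R_t) \to 0$ by the hypothesis $R_t \gg t$.

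Setting $F := \bigcap_{j \le n_t(\Psi)} (G_j \cap E_j)$, we obtain $\P(F^c) = o(1)$, and on $F$ the joint configuration $(\oeta_t \cap B_{R_t}^{(j)} \cap \Psi)_j$ coincides with the independent family $(\tilde\eta_t^{(j)} \cap \Psi)_j$. This bounds the total-variation distance between $\law((\oeta_t \cap B_{R_t}^{(j)} \cap \Psi)_j)$ and $\bigotimes_j \law(\tilde\eta_t^{(j)} \cap \Psi)$ by $o(1)$; applying the same coupling one box at a time bounds each per-box distance $\|\law(\oeta_t \cap B_{R_t}^{(j)} \cap \Psi) - \law(\tilde\eta_t^{(j)} \cap \Psi)\|$ by $\P((G_j \cap E_j)^c)$, whose sum is also $o(1)$, so the triangle inequality gives the claim. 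The subtle point to anticipate is that $G_j$ itself depends on the graphical data in a $\beta t$-neighborhood of $D_{R_t}^{(j)}$ and is therefore \emph{not} $(B_{R_t}^{(j)} \times [0,t])$-measurable; this is exactly why I couple with the strictly local $\tilde\eta_t^{(j)}$, which is $(B_{R_t}^{(j)} \times [0,t])$-measurable by construction and therefore independent across $j$, rather than trying to assert independence of the pieces of $\eta_t$ directly.
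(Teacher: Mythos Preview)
Your argument is correct. The approach is close in spirit to the paper's but organized differently. The paper first conditions on $G^0(\partial\Psi^+)$ to confine everything to an enlarged region, then \emph{shrinks the initial condition} to $\mathring{B}_t=\bigcup_j B_{R_t-\beta t}^{(j)}$ and bounds the discrepancy by the ring survival probability $|B_{R_t}^{(j)}\setminus B_{R_t-\beta t}^{(j)}|\,e^{-\alpha t}$; it then conditions on $G^0(\cup_j D_{R_t-\beta t}^{(j)})$ and uses that both these good events and the box processes $\eta_t^{\mathring{B}_t^j}\cap B_{R_t}^{(j)}$ are each measurable with respect to the graphical data in $B_{R_t}^{(j)}\times[0,t]$, so the conditioned joint law factorizes exactly. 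You instead build the genuinely independent family $\tilde\eta_t^{(j)}$ on the restricted graph from the outset and couple with it on $G_j\cap E_j$, handling the ring via the time-$t$ emptiness event $E_j$ rather than by trimming the initial configuration. Your coupling route sidesteps the verification that the conditioning factorizes correctly (the point you flag at the end), at the price of using $G^{[0,t]}$ on the outer shell, which reaches into the time interval $[0,2t]$; the paper gets by with $G^0$ on the inner shell, which keeps all good events strictly inside each box. Both routes give the same dominant error of order $t/R_t$.
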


We are now ready to prove the scaling limit.

\begin{figure}[!hb]
{\centering\begin{tikzpicture}[scale=0.4,rotate=90]
%patatoid masking (clipping in tiks idiom)
%\path[clip] (8, 15) circle[radius=7];
\foreach \x in {10, ...,13}
  \draw[color=gray] (\x,-2.8) -- (\x,-6.2);
  \foreach \x in {-3, ...,-6}
  \draw[color=gray] (9.8,\x) -- (13.2,\x);
  \foreach \x in {4, 8}
  \draw[color=myblue,very thick] (\x,-2.3) -- (\x,-6.7);
    \foreach \x in {-2.5, -6.5}
  \draw[color=myblue,very thick] (3.8,\x) -- (8.2,\x);
  \draw[color=myblue,fill] (6, -4.5) circle[radius=3pt];
  \draw (11.5,-8) node{$\mathcal{R}_t$};
   \draw (6, -8) node{$\mathcal{R}'_t$};
\draw (16,5) node{$\Psi\subseteq \R^d$};
\draw[clip] plot [smooth cycle] coordinates {%
            ( 5,  2)  ( 8,  0) ( 12,  0)  %
            (14,  4)  (15,  8)  (17, 12)  
            (17, 16)  (12, 19)  ( 8, 24)  %
            ( 4, 24)  ( 0, 20)  ( 2, 12)  %
            ( 4, 4) };
% Grid
\foreach \x in {0, ..., 30}
  \draw[color=gray] (-0.5, \x) -- (17.5, \x);
\foreach \x in {0, ..., 17}
  \draw[color=gray] (\x, -0.5) -- (\x, 30.5);
%big grid and center dots
\foreach \x in {0, 5, 10, 15, 20, 25, 30 }
  \draw[color=myblue,very thick] (-0.5, \x.5) -- (17.5, \x.5);
\foreach \x in {0, 5, 10, 15, 20}
  \draw[color=myblue,very thick] (\x.5, -0.5) -- (\x.5, 30.5);
\foreach \x in {0, 5, 10, 15, 20}
  \foreach \y in {0, 5, 10, 15, 20, 25, 30 }
     \draw[color=myblue,fill] (\x+3, \y+3) circle[radius=3pt];
%Hashed big case with big center dot
\def\hashedcase(#1,#2){%
\fill[pattern=north west spaced lines, pattern color=mypink!60] (#1+0.55, #2+0.55) rectangle (#1+5.45, #2+5.45);%
\draw[color=myblue,fill] (#1+3, #2+3) circle[radius=5pt];}
\hashedcase(5, 0);
\hashedcase(0, 10);
\hashedcase(5, 15);
\hashedcase(10, 10);
%Small mypink dots
\draw[color=mypink,fill] ( 9,  5) circle[radius=3pt];
\draw[color=mypink,fill] (10,  4) circle[radius=3pt];
\draw[color=mypink,fill] ( 4, 13) circle[radius=3pt];
\draw[color=mypink,fill] ( 4, 14) circle[radius=3pt];
\draw[color=mypink,fill] ( 4, 15) circle[radius=3pt];
\draw[color=mypink,fill] ( 5, 13) circle[radius=3pt];
\draw[color=mypink,fill] ( 5, 14) circle[radius=3pt];
\draw[color=mypink,fill] ( 7, 17) circle[radius=3pt];
\draw[color=mypink,fill] ( 6, 17) circle[radius=3pt];
\draw[color=mypink,fill] (13, 11) circle[radius=3pt];
\draw[color=mypink,fill] (12, 12) circle[radius=3pt];
\draw[color=mypink,fill] (11, 13) circle[radius=3pt];
\draw[color=mypink,fill] (12, 13) circle[radius=3pt];

\end{tikzpicture}\par}
\caption{(Color online) Microscopic and mesoscopic lattices}
\label{micro}
\end{figure}

\begin{proof}
[Proof of Theorem~\ref{thm:convergence}]
Consider the random measure
\[
M_t=\sum_{(x,\zeta)\in\C_t} \delta_{(e^{-\frac{\alpha}{d} t }x,\zeta),}
\]
where $(x,\zeta)$ represents each connected component of $\eta_t$ as defined at the introduction.
Let $M$ be a Poisson random measure on $\R^d\times \Lambda$ with intensity $\rho \dd x\times \nu$.

We want to show that $M_t$ converges in distribution to $M$.
We first prove convergence assuming that $$t^{2d+1} \ll R_t \ll e^{\sqrt[3]{t}},$$ and then extend it to any other $1 \ll R_t \ll e^{\alpha t/d}$.
Let $\teta_t$ be obtained by patching a collection of independent copies of $\eta_t$ on boxes of radius $R_t$.
We denote $\tilde{\C}_t$ and $\tM_t$ to indicate the use of $\teta_t$ instead of $\eta_t$.
By Lemma~\ref{lemma:indepbetweenboxes}, it suffices to show that $\tM_t$ converges in distribution to $M$.
We will write $$\oeta_t = e^{-\alpha t/d} \teta_t$$ for the rescaled configuration.

Now let $\Xi\subset \Psi$ be a compact rectangle and let $\D \subset \Lambda$ be a finite set of configurations.
By \cite[Proposition~3.22]{resnick}, it suffices to show that
\begin{equation}
\label{eq:pppempty}
\P \left(\tM_t(\Xi \times \D)=0 \big.\right)
\to
\P \left( M(\Xi \times \D)=0 \big.\right)
=
e^{-\rho|\Xi|\nu(\D)}
\end{equation}
and
\begin{equation}
\label{eq:pppmean}
\E \left( \tM_t(\Xi \times \D) \big.\right) \to
\E \left( M(\Xi \times \D) \big.\right)
=
\rho |\Xi| \nu(\D)
\end{equation}
as $t \to \infty$.

Write $p_t=\P(\oeta_t\cap B_{R_t}^{(1)}\neq\emptyset)$ 
By Lemma~\ref{lemma:prefactor} we have $p_t \sim \rho |B_{R_t}| e^{-\alpha t}$.
Moreover, using Proposition~\ref{prop:cvtonu} we get
\begin{equation}
\label{eq:boxconfiguration}
\P \left( \langle \oeta_t\cap B_{R_t}^{(1)} \rangle \in \D \big.\right) \sim \rho |B_{R_t}| e^{-\alpha t} \nu(\D).
\end{equation}

Consider $\tilde{\C}_t^\#$ obtained from $\teta$ similarly to $\tilde{\C}_t$, except that we only connect infected sites lying within the same box $B_{R_t}^{(j)}$. Denote the corresponding random measure by $\tM_t^\#$.
Let $d_\Psi(\tM_t,\tM_t^\#)$ count the number of point masses in $\Psi \times \Lambda$ that are present in one of the random measures and not in the other.
Then
\begin{multline*}
\P\left( \tM_t \ne \tM_t^\# \text{ in } \Psi \right)
=
\P \left( d_\Psi(\tM_t,\tM_t^\#) \geq 1 \right)
\leq
\E \left[ d_\Psi(\tM_t,\tM_t^\#) \right]
\leq
\\
\leq
2 \sum_{j=1}^{n_t(\Psi)} 
\P\left(\oeta_t\cap B_{R_t}^{(j)}\neq\emptyset \text{ and } \oeta_t\cap B_{R_t}^{(i)}\neq\emptyset \text{ for some } B_{R_t}^{(i)}\text{ neighbor of }B_{R_t}^{(j)} \right)
=
\\
=
2 \, n_t \, (3^d-1) \, p_t^2
\lesssim
2 \cdot 3^d \, \rho \, |\Psi| \cdot p_t \to 0,
\end{multline*}
so we can consider $\tM_t^\#$ instead of $\tM_t^\#$ in~(\ref{eq:pppempty}) and~(\ref{eq:pppmean}).
Label the boxes $B_{R_t}^{(j)}$ so that the first $n_t(\Xi)$ intersect $\Xi$ and the first $n_t(\Psi)$ intersect $\Psi$.
Using independence and~(\ref{eq:boxconfiguration}),
\begin{multline*}
\P \left(\tM_t^\#(\Xi \times \D)=0 \big.\right)
=
\P \left( \langle \oeta_t\cap B_{R_t}^{(j)} \rangle \notin \D ,\ i=1,\dots,n_t(\Xi) \big.\right)
=
\\
=
\left[ 1 - \P \left( \langle \oeta_t\cap B_{R_t}^{(1)} \rangle \in \D \big.\right) \right]^{n_t(\Xi)}
\approx
e^{- \rho |B_{R_t}| e^{-\alpha t} \nu(\D) n_t(\Xi)}
\approx
e^{- \rho |\Xi| \nu(\D)}
,
\end{multline*}
and similarly,
\begin{equation*}
\E \left(\tM_t^\#(\Xi \times \D) \big.\right)
=
\sum_{i=1}^{n_t(\Xi)}
\P \left( \langle \oeta_t\cap B_{R_t}^{(j)} \rangle \in \D \big.\right)
\approx
\rho |B_{R_t}| e^{-\alpha t} \nu(\D) n_t(\Xi)
\approx
\rho |\Xi| \nu(\D)
.
\end{equation*}
This proves the required convergence in distribution for $t^{2d+1} \ll R_t \ll e^{\sqrt[3]{t}}$.

To conclude the proof, suppose that $1 \ll R_t \ll e^{\alpha t/d}$.
For $R_t' = t^{2d+2}$, we already proved that $M_t' \stackrel{d}{\to} M$.
We will show that $M_t \cdot \1_\Psi = M_t' \cdot \1_\Psi$ with high probability as $t\to\infty$.

For convenience, take $w_t$ and $W_t$ such that $R_t , R_t' \in [w_t , W_t]$ for $t\ge 0$, and such that $w_t\to\infty$ and $\epsilon_t = \frac{W_t}{ e^{\alpha t/d}} \to 0$. Notice that the occurrence of $M_t \cdot \1_\Psi \ne M_t'\cdot \1_\Psi$ implies that $\eta_t$ contains a pair of infected sites in the box $e^{\alpha t/d}\Psi$ whose distance is in $[w_t,W_t]$, or an infected site at distance less than $W_t$ from the outside of the box.
Indeed, if every pair of infected sites are distant less than $w_t$ or more than $W_t$, then either they are connected for being closer than both $R_t$ and $R_t'$, or they are not connected for being farther than both $R_t$ and $R_t'$, and thus $M_t \cdot \1_\Psi = M_t' \cdot \1_\Psi$.
Now, unless $\eta_t$ contains infected sites $W_t$-close to $\partial e^{\alpha t/d}\Psi$ (which has probability bounded by $2d (2K)^{d-1} \epsilon_t$, vanishing as $t\to\infty$), the occurrence of a pair of infected sites whose distance is in $[w_t,W_t]$ implies that $M_t' \cdot \1_{\Psi}$ either contains a cluster $(x,\zeta)$ with $\diam(\zeta)\geq w_t$, or it contains a pair of clusters $(x^1,\zeta^1)$ and $(x^2,\zeta^2)$ with $\| x^1 - x^2 \|_\infty \leq 3 \epsilon_t$.
But the probability that $M_t'$ contains such clusters approximates the probability that the Poisson process $M$ contains such clusters.
The latter probability, in turn, is bounded by $\rho \, |\Psi| \, \nu\{\zeta:\diam{\zeta}\geq w_t\} + \rho^2 (2\epsilon_t)^d |\Psi|$, which is arbitrarily small when $t$ is large enough.
\end{proof}

\begin{proof}
[Proof of Lemma~\ref{lemma:prefactor}]
Using the fact that
\[
\P\left( \o \in \eta_t \right) \sim h(\{\o\}) \, e^{-\alpha t},
\]
we compute the probability to have infected points in the box $B_{R_t}$ by
\begin{align*}
\P( \eta_t\cap B_{R_t}\neq \emptyset)
&=
\frac{\E\left(|\eta_t\cap B_{R_t}| \right)}{\E\left(|\eta_t\cap B_{R_t}|\,\big|\, \eta_t\cap B_{R_t}\neq \emptyset\right)}
=\frac{|B_{R_t}|\P(\o\in \eta_t)}{\E\left(|\eta_t\cap B_{R_t}|\,\big|\, \eta_t\cap B_{R_t}\neq \emptyset\right)}.
\end{align*}
Thus, using Proposition~\ref{prop:convexpectyaglom} and~(\ref{eq:hprefactor}), we get
\begin{equation*}
\frac{\P( \eta_t\cap B_{R_t}\neq \emptyset)}{e^{-\alpha t} \, |B_{R_t}| } = \frac{e^{\alpha t} \, \P(\o\in \eta_t)}{\E\left(|\eta_t\cap B_{R_t}|\,\big|\, \eta_t\cap B_{R_t}\neq \emptyset\right)}\xrightarrow[t\to\infty]{}\frac{h(\{\o\})}{\E_\nu |\zeta|}.
\qedhere
\end{equation*}
\end{proof}

\begin{proof}
[Proof of Lemma~\ref{lemma:indepbetweenboxes}]
Let
$$\displaystyle\Psi^+ = [-K-\beta t e^{-\alpha t/d},K+\beta t e^{-\alpha t/d}]^d.$$
Let $t>0$ and $(B_{R_t}^{(j)})_{j\in\{1,\ldots,n_t\}}$ be a subfamily of balls covering $\Psi$. The idea is the following: with high probability, we can suppose that the boundaries of $\Psi^+$ and the boxes ${B}_{R_t-\beta t}^{(j)}$ are good and that only the points in ${B}_{R_t-\beta t}^{(j)}$ (and outside of $\Psi^+$) survive, in which case $\oeta_t\cap {B}_{R_t}^{(j)}$ depends only on ${B}_{R_t}^{(j)}\times[0,t]$ which gives us the independence.

Denote
\[
B_{r}^{(j)} = \mathcal{B} ( y_t^{(j)}, r e^{-\frac{\alpha t}{d}} ) \cap \mathcal{R}_t
\quad
\text{ and }
\quad
D_{r}^{(j)} = B_{r}^{(j)} \setminus B_{r-1}^{(j)}
.
\]
For brevity write
$\mathring{B}_t^j = B_{R_t-\beta t}^{(j)}$, $\mathring{D}_t^j = D_{R_t-\beta t}^{(j)}$ and $\mathring{B}_t=\cup_{1\leq j\leq n_t}\mathring{B}_t^j $.
\begin{align}
\nonumber
&
\limsup_{t\to\infty} \bigg\| \law\left(\oeta_t \cap B_{R_t}^{(j)}\cap\Psi,1\le j\le n_t\right)-\bigotimes_{j= 1}^{n_t} \law \bigg(\oeta_t \cap B_{R_t}^{(j)}\cap \Psi \bigg)\bigg\|
\\
\nonumber
& \phantom{a}
\leq \limsup_{t\to\infty} \bigg\| \law\left(\oeta_t \cap B_{R_t}^{(j)},1\le j\le n_t\,\big|\,G^0\left(\partial\Psi^+\right)\right)-\bigotimes_{j= 1}^{n_t} \law \bigg(\oeta_t \cap B_{R_t}^{(j)} \,\big|\,G^0\left(\partial\Psi^+\right)\bigg)\bigg\|
\\
\label{eq:zerolimsup}
& \qquad \qquad
+ \limsup_{t\to\infty} 2 \P\left( \neg G^0\left(\partial\Psi^+\right) \right)
\\
\nonumber
& \phantom{a}
\leq \limsup_{t\to\infty} \bigg\| \law\left(\oeta_t^{\mathring{B}_t} \cap B_{R_t}^{(j)},1\le j\le n_t\right) - \bigotimes_{j=1}^{n_t} \law \left(\oeta_t^{\mathring{B}_t^j} \cap B_{R_t}^{(j)}\,\right)\bigg\| +
\\
\label{eq:firstlimsup}
& \qquad\qquad
+ \limsup_{t\to\infty} 2 \sum_{j=1}^{n_t} \big|B_{R_t}^{(j)} \setminus {\mathring{B}_t^j} \big| e^{-\alpha t}
\\
\nonumber
& \phantom{a}
\leq \limsup_{t\to\infty} \left\| \law\left(\oeta_t^{\mathring{B}_t} \cap B_{R_t}^{(j)},1\le j\le n_t \,\big|\, G^0(\cup_{j=1}^{n_t} \mathring{D}_{t}^{j})\right) - \bigotimes_{j=1}^{n_t} \law \left(\oeta_t^{\mathring{B}_t^j} \cap B_{R_t}^{(j)}\,\right)\right\|
\\
\label{eq:secondlimsup}
& \qquad \qquad
+ \limsup_{t\to\infty} n_t\cdot \P\left( \neg G^0(\mathring{D}_{t}^{1}) \right)
\\
\nonumber
& \phantom{a}
= \limsup_{t\to\infty} \left\| \bigotimes_{j=1}^{n_t}\law\left(\oeta_t^{\mathring{B}_t^j} \cap B_{R_t}^{(j)}\,\Big|\, G^0(\mathring{D}_{t}^{j})\right) - \bigotimes_{j= 1}^{n_t} \law \left(\oeta_t^{\mathring{B}_t^j} \cap B_{R_t}^{(j)}\,\right)\right\|
\\
\nonumber
& \phantom{a}
= \limsup_{t\to\infty} {n_t}\cdot \left\| \law\left(\oeta_t^{\mathring{B}_t^1} \cap B_{R_t}^{(1)} \,\Big|\, G^0(\mathring{D}_{t}^{1})\right) - \law \left(\oeta_t^{\mathring{B}_t^1} \cap B_{R_t}^{(1)}\,\right)\right\|
\\
\label{eq:thirdlimsup}
& \phantom{a}
\le \limsup_{t\to\infty} n_t\cdot \P\left( \neg G^0(\mathring{D}_{t}^{1}) \right)
= 0
.
\end{align}
The terms~(\ref{eq:zerolimsup}),~(\ref{eq:secondlimsup}) and~(\ref{eq:thirdlimsup}) vanish for $\beta$ large enough by Corollary~\ref{cor:goodwithout}.
The term in~(\ref{eq:firstlimsup}) is bounded by $2 n_t \cdot C \beta t R_t^{d-1} \cdot e^{-\alpha t}$, which by~(\ref{eq:boxcount}) vanishes since $R_t \gg t$.

In the second inequality we excluded some sites from the initial configuration and used~(\ref{eq:usefulestimates}).
The first equality uses the fact that, conditioning on the event of all good borders, the processes in each box are independent and stay within the respective boxes.
\end{proof}

\begin{proof}
[Proof of Lemma~\ref{lemma:nonperco}]
We can assume that $R_t \geq \beta t$, since
decreasing $R_t$ only removes connections.
For fixed $t$, consider the partitioning of $\Z^d$ into boxes given by
\[
B^{x} = (2R_t + 1)x + B^{}_{R_t}, \quad x\in \Z^d
.
\]
To control interaction between distant boxes, we will consider the boundary of an enlarged box, also indexed by $x\in \Z^d$, namely
\[
D^x = (2R_t + 1)x + D^{}_{R_t+\beta t}
\]
We say that two boxes $B^x$ and $B^y$ are \emph{neighbors} if $\|x-y\|_{\infty} = 1$, so each box has $3^d-1$ neighbors.
If the event $G^0(D^x)$ occurs and $\eta_t \cap B^x = \emptyset$, we say that the box $B^x$ is \emph{good}, otherwise it is \emph{bad}. Since paths starting from farther boxes need to cross $D^x\times[0,t]$ in order to enter $B^x\times[0,t]$, the event that a box is good is determined by the graphical construction on $B^x\times[0,t]$ and neighboring boxes.
Now, for $t$ and $\beta$ large enough, we have
\begin{align*}
\P \left( B^x \text{ is bad} \right) &
\leq \P\left(B_{R_t+\beta t}\leadsto L_t\right)
+
\P \left( \neg G^0(D_{R_t+\beta t}) \right)
\leq |B_{R_t+\beta t}|e^{-\alpha t} + e^{-\rho t},
\end{align*}
which vanishes for large $t$ since $R_t \ll e^{\alpha t/d}$. So the process of good and bad boxes is a 2-dependent percolation field with low density.
Therefore, for large enough $t$ there is a.s.\ no infinite path of neighboring bad boxes.
To conclude the proof, notice that an infinite component in $\mathcal{C}_t$ would require a sequence of infected sites at time $t$, each one $R_t$-close to their predecessor, and this in turn would imply the existence of an infinite sequence of neighboring bad boxes.
\end{proof}

\section*{Acknowledgments}

This research started during thematic trimester ``Disordered Systems, Random Spatial Processes and Some Applications'' at the Henri Poincaré Institute. LR thanks the IHP and the Fondation Sciences Mathématiques de Paris for generous support.
This project supported by grants
PIP 11220130100521CO,
PICT-2015-3154, PICT-2013-2137, PICT-2012-2744,
Conicet-45955 and MinCyT-BR-13/14.

\bibliographystyle{bib/leo}
\bibliography{bib/biblio}

\end{document}